\newcommand{\marrow}{\marginpar[\hfill$\longrightarrow$]{$\longleftarrow$}}
\newcommand{\niceremarkcolor}[4]{\textcolor{#4}{\textsc{#1 #2:} \marrow\textsf{#3}}}
\newcommand{\lluis}[2][says]{\niceremarkcolor{Lluis}{#1}{#2}{olive}}
\newcommand{\andrew}[2][says]{\niceremarkcolor{Andrew}{#1}{#2}{blue}}
\newcommand{\overbar}[1]{\mkern 1.5mu\overline{\mkern-1.5mu#1\mkern-1.5mu}\mkern 1.5mu}
\newtheorem{theorem}{Theorem}
\newtheorem{lemma}[theorem]{Lemma}
\newtheorem{corollary}[theorem]{Corollary}
\newtheorem
{definition}[theorem]{Definition}
\newtheorem{proposition}[theorem]{Proposition}
\newtheorem{remark}[theorem]{Remark}
\newtheorem{observation}[theorem]{Observation}
\newcommand{\sg}{\mathbf{sg}}
\newcommand{\eg}{\mathbf{eg}}
\newcommand{\g}{\mathbf{g}}
\newcommand{\ee}{\mathbf{e}}
\newcommand{\ff}{\mathbf{f}}
\newcommand{\oo}{\mathbf{o}}
\newcommand{\kk}{\mathbf{k}}
\newcommand{\vv}{\mathbf{v}}
\newcommand{\cchi}{\boldsymbol{\chi}}
\newcommand{\preceqdot}{\mathrel{\mathpalette\pr@ceqd@t\relax}}
\newcommand{\pr@ceqd@t}[2]{%
  \begingroup
  \sbox\z@{$#1\prec$}\sbox\tw@{$#1\preceq$}%
  \dimen@=\dimexpr\ht\tw@-\ht\z@\relax
  {\preceq}%
  \mkern-5mu
  \raisebox{\dimen@}{$\m@th#1\cdot$}%
  \endgroup
}
\theoremstyle{remark}
\title{Homomorphisms between graphs embedded on surfaces}
\author{Delia Garijo\thanks{Department of Applied Mathematics I, University of Seville, Spain. \texttt{dgarijo@us.es}. Supported by project PID2019-104129GB-I00/ AEI/ 10.13039/501100011033.} \and Andrew Goodall\thanks{Computer Science Institute (I\'UUK), Charles University, Prague. \texttt{andrew@iuuk.mff.cuni.cz}. Supported by GA\v{C}R 22-17398S.}  \and Llu\'{i}s Vena \thanks{Department of Mathematics, Universitat Polit\`ecnica de Catalunya, Barcelona. \texttt{lluis.vena@upc.edu}. Supported by the a Beatriu de Pin\'os Fellowship BP2018-0030 of the AGAUR, Horizon's 2020 program cofund.}}
\begin{document}

\maketitle

\begin{abstract}
We extend the notion of graph homomorphism to cellularly embedded graphs (maps) by designing operations on vertices and edges that respect the surface topology; we thus obtain the first definition of map homomorphism that preserves both the combinatorial structure (as a graph homomorphism) and the topological structure of the surface (in particular, orientability and genus). Notions such as the core of a graph and the homomorphism order on cores are then extended to maps. We also develop a purely combinatorial framework for various topological features of a map such as the contractibility of closed walks, which in particular allows us to characterize map cores. We then show that the poset of map cores ordered by the existence of a homomorphism is connected and, in contrast to graph homomorphisms, does not contain any dense interval (so it is not universal for countable posets). Finally, we give examples of a pair of cores with an infinite number of cores between them, an infinite chain of gaps, and arbitrarily large antichains with a common homomorphic image.
\end{abstract}
\tableofcontents
\section{Introduction}

Homomorphisms between sets with added structure are mappings that preserve this structure. For example, homomorphisms between graphs are mappings between their vertex sets that preserve adjacency. For multigraphs (loops and parallel edges allowed), a homomorphism is defined as a pair of mappings, one on vertices and the other on edges, which together preserve vertex-edge incidences. 
For a sense of the richness of the theory of graph homomorphisms the reader is referred to~\cite{hell04}. 

Key to the structural understanding of graph homomorphisms is the notion of a core~\cite{hell_core_1992}, and in particular the poset of cores, known as the homomorphism order. The enumeration of graph homomorphisms connects their theory to applications in statistical physics: the partition function of the $q$-state Potts model on a graph, and more generally  
partition functions of vertex-colouring models, are instances of homomorphism functions~\cite{FLS07} (given by a weighted enumeration of graph homomorphisms from the given graph to a fixed edge-weighted graph). These partition functions are in turn intimately related to the Tutte polynomial of a graph, whose evaluations include enumerations of colourings and flows. In fact,
any evaluation of the Tutte polynomial of a graph expressible as a homomorphism function is the partition function of a Potts model on the graph~\cite{GGN11}. 
For graphs cellularly embedded in a surface ({\em maps}), the analogous notions of colourings (or tensions, rather) and flows (taking non-identity values in a finite group) are counted by evaluations of the surface Tutte polynomial~\cite{GKRV18, GLRV20}, 
and have been expressed as partition functions of edge-colouring models~\cite{litjens17}. 
This leaves the question of whether these enumerations can be expressed in terms of map homomorphisms as partition functions of vertex-colouring models on the map. To even begin to answer this question we need to formulate a definition of map homomorphism that extends that of graph homomorphism while respecting the topology of the graph~embedding. We will then be able to adapt the fruitful analysis of graph homomorphisms via the poset of cores to the case of~maps.  

As we shall relate shortly, how to define homomorphisms between maps, however,  is not so clear, one difficulty consisting in the preservation of both the combinatorial structure of the graph and the topological structure of the embedding. 
Maps have several representations, some emphasizing their topological structure (cellular embeddings of graphs, ribbon graphs), some emphasizing their combinatorial structure (vertex-edge-face flags, rotation systems, graph-encoded maps, Tutte's permutation axiomatization).
What counts as a map homomorphism may thus depend on which representation is chosen: what structure is to be preserved exactly? To start on firm ground, isomorphism of maps has just one candidate for its definition no matter what representation is chosen.

We use a version of Tutte's permutation axiomatization of maps~\cite[Chapter X]{tutte01} in which three involutions defined on a set of\; ``crosses'' (obtained by quartering edges of the map) 
 describe how to get from one vertex, edge or face to an adjacent one whilst respecting the topology of the map (a formal definition is given in Definition~\ref{def:map_involutions_labelled} below). 
An isomorphism between maps in this representation is a bijection between their cross sets that commutes with the involutions defining the maps. 
By dropping the bijective condition, we obtain the definition of map homomorphism given both by Malni\v{c}, Nedela, and \v{S}koviera~\cite{malnic02} 
(restricted to the case of orientable surfaces) and by
Litjens and Sevenster~\cite{litjens17} 
 (restricted to locally bijective mappings, the context being universal covers of graphs).
Map homomorphisms defined in this way
preserve local combinatorial and topological structure (such as vertex-edge-face incidences and vertex rotations) 
but, in general, do not preserve global topological parameters such as orientability or genus.

We formulate a new definition of map homomorphism (Definition~\ref{def:maphom_full}) that ensures that homomorphisms preserve the surface topology; our definition is mainly based on a vertex identification operation that we call \emph{vertex gluing}.   
Vertex gluing, defined in terms of cross permutations (Definition~\ref{def:vertex_gluing}), has the following interpretation in the representation of a  map as a cellular embedding of a graph in a surface: two vertices can be identified under this operation if they can be moved continuously toward each other until they coincide without crossing any edges, or if they lie in different connected components; vertex gluing can be realized as the insertion of an edge (joining the vertices to be identified in a way that preserves genus and orientability) followed by its contraction. 
Any other rule of identifying vertices defined for maps generally is either a special case of vertex gluing or fails to preserve genus and orientability in some instances. We then define a map homomorphism as a sequence of such vertex gluings followed by a sequence of {\em duplicate edge gluings} (the analogue of suppressing parallel edges in graphs). Map homomorphisms in our sense preserve not only orientability and genus but also other key topological features (such as the contractibility of facial walks) and the combinatorial structure of the map (when forgetting the embedding, a homomorphism between maps gives a homomorphism between their underlying graphs).

Using our definition of map homomorphism, we define a~\emph{core} of a map (Definition~\ref{def.core}) as a minimal submap that is a homomorphic image of the whole map, analogously to how the core of a graph is defined. 
We establish several properties of map cores shared with graph cores, and characterize map cores in terms of a certain type of contractible closed walks, which, roughly speaking, separate off a disc from the remainder of the map's surface. This characterization exploits the ``locality'' of map homomorphisms, in the sense that any map homomorphism can be decomposed into a sequence of map homomorphisms that fix all but a plane submap contained within a contractible curve on the surface in which the map is embedded. After giving applications of the characterization, we show that, unlike graph homomorphisms, the map homomorphism order has no dense intervals, and is thus not universal. Finally, we produce examples of maps with an infinite number of cores between them,
an example of an infinite chain~of~gaps, and arbitrarily large antichains of cores sharing a common homomorphic image.

We would also like to highlight that another contribution of our work is a purely combinatorial formulation for various topological features of a map in its representation as a cellular embedding of a graph in a closed surface, such as the contractibility of closed walks. (A more general notion of contractibility is needed than that given by Mohar and Thomassen~\cite{mohar01}, which does not include closed walks that revisit edges within its scope, while Cabello and Mohar~\cite{CM07} give a topological definition of contractibility not straightforward to translate into combinatorial terms.)

The paper is organized as follows. 
In Section~\ref{sec:maps} we use a permutation axiomatization for maps to develop the formalism required for later constructions and proofs.  
Section~\ref{sec:del_con} focuses on the operations of deletion and contraction of edges in maps.  
By considering the effect of these operations on the various map parameters such as connectivity, genus and orientability, we arrive at eleven types of map edge (compared to the three edge types of ordinary, bridge and loop for multigraphs).
These types serve to indicate both the combinatorial and topological role an edge plays in the map.

Section~\ref{sec:map_homomomorphisms} is devoted to developing the notion of a homomorphism between maps. In Section~\ref{sec:vertex_gluing} we give a formal account of the operation of vertex gluing, 
and in Section~\ref{sec:duplicate_edges} we define the operation of duplicate edge gluing.
In Section~\ref{sec:defining_map_homomorphism} we use vertex and duplicate edge gluings to define what it means to be an epimorphism from one map onto another, and the already established notion of isomorphism between maps in order to define what it means to be a monomorphism from one map into another. By composing an epimorphism and a monomorphism we finally arrive at what it means to be a homomorphism from one map to another.

Having then in hand the formal definition of a homomorphism between maps, Section~\ref{sec:cores} is devoted to the analogue of graph cores for maps. 
In Sections~\ref{sec:cutting} and~\ref{sec:circuits-hom} topological notions such as separating and contractible curves are defined in terms of the permutation representation of maps so as to move toward the characterization of when a map is a core given in  Section~\ref{sec:char_cores}. Significant differences to graph cores then emerge in the structure of the partial order on the set of cores under the relation of ``being homomorphic to'' (Section~~\ref{sec:cores_poset}). 

\section{Maps}\label{sec:maps}





We use the combinatorial definition of maps based on Tutte's  permutation\label{def:it_perm_use} axiomatization of cellularly embedded graphs~\cite{tutte73},~\cite[Ch. X]{tutte01} (see also~\cite{Jones81, BS85}), which is adapted so as to accommodate isolated vertices (maps without edges). Also, instead of the two involutions and basic permutation of~\cite[Axioms X.1 to X.4]{tutte01}, we consider three involutions to define the map (as for generalized maps of dimension $2$ \cite{lienhardt91,lienhardt94, damlien03}, without boundary and with the artifice of empty permutation cycles to accommodate edgeless maps). We first introduce some notation.  

Permutations of a finite set $C$ are written in function notation, and the product of permutations is their composition, read from right to left: if $\alpha,\beta:C\to C$ are two permutations then $\alpha\beta$ is defined for $c\in C$ by $\alpha\beta(c)=\alpha(\beta(c))$. Parentheses are used to enclose cyclic permutations and to indicate the element being mapped; in the first case we add some spacing between the parenthesis and the elements of the cycle, and we may drop several in the second case (writing $\alpha c$ for $\alpha(c)$, note that we use Latin characters for the crosses while Greek characters for the permutations). For a sequence $\mathbf x=(x_1,x_2,\dots ,x_k)$ of distinct elements in $C$, the permutation  $(\;\;x_1 \;\;\; x_2 \;\;\; \cdots \;\;\; x_k\;\;)$ in cycle notation is denoted by $(\; \mathbf x\;)$. For a permutation $\alpha$, we let $\alpha \mathbf x=(\alpha x_1,\alpha x_2,\dots, \alpha x_k)$; then $(\;\alpha\mathbf x\;)=\alpha(\; \mathbf x\;)\alpha^{-1}$. The reverse sequence $(x_k,\dots, x_2, x_1)$ is denoted by $\overline{\mathbf x}$, and as cyclic permutations $(\; \overline{\mathbf x}\;)=(\;\mathbf x\;)^{-1}$.  Thus,  $\overline{\alpha \mathbf x}$, equal to $\alpha\overline{\mathbf x}$, stands for the sequence $(\alpha x_k,\dots, \alpha x_2,\alpha x_1)$, and $(\;\alpha\overline{\mathbf x}\;)=\alpha(\; \mathbf x\;)^{-1}\alpha^{-1}$.

\begin{definition}\label{def:map_involutions_labelled}
Let $V, E$ and $F$ be finite sets, elements of which are called {\em vertices}, {\em edges} and {\em faces}, respectively. 
A {\em map} $M=(V,E,F)$ is specified by a tuple  $(C,\alpha_0,\alpha_1,\alpha_2)$ in which $C$ is a finite set, whose elements are called \emph{crosses}; and
\begin{itemize}
 \item $\alpha_0$,$\alpha_1$,$\alpha_2:C\to C$ are involutions with no fixed points,
that is,  
$\alpha_i^2c=c$ and $\alpha_i c\neq c$ for each $i\in\{0,1,2\}$ and all $c\in C$;
\item the permutations $\alpha_0$ and $\alpha_2$ commute, that is, $\alpha_0\alpha_2=\alpha_2\alpha_0$;

\item each \emph{vertex} $v\in V$ is associated with a pair of cycles $(\; \mathbf x\;)\;(\; \alpha_2\overline{\mathbf x}\;)$ in the disjoint cycle decomposition of the permutation $\alpha_1\alpha_2$,  
where $\mathbf x$ is a sequence of crosses (we allow $\mathbf x$ to be empty, making a pair of empty cycles, which corresponds to $v$ being an isolated vertex);

\item the disjoint cycle decomposition of the involution $\alpha_1$ is supplemented by pairs of empty cycles (each pair corresponding to edgeless components  of the map);

\item each \emph{edge} $e\in E$ is associated with a pair of transpositions $(\; c \;\; \alpha_0\alpha_2 c\;)\;\;(\; \alpha_0 c \;\; \alpha_2 c\;)$ for some cross $c\in C$; 
\item each \emph{face} $z\in F$ is associated with a pair of cycles $(\; \mathbf y\;)\;(\; \alpha_0\overline{\mathbf y}\;)$  in the disjoint cycle decomposition of the permutation $\alpha_1\alpha_0$,  
where $\mathbf y$ is a sequence of crosses (we allow $\mathbf y$ to be empty, which corresponds to $z$ being the face of an isolated vertex).
\end{itemize}

\end{definition}
The vertex and face permutations of $M$ are denoted respectively by $$\tau=\alpha_1\alpha_2 \quad {\rm and} \quad \phi=\alpha_1\alpha_0.$$ A tuple $(C,\alpha_0,\alpha_1,\alpha_2)$ may be alternatively specified by giving $C,\alpha_0,\alpha_2$ and either the vertex permutation $\tau$ or the face permutation $\phi$; this is the approach taken by Tutte~\cite[Chapter~X]{tutte01}. Figure~\ref{fig:map_involutions} illustrates the involutions $\alpha_0,\alpha_1$ and $\alpha_2$, the vertex permutation $\tau$ and the face permutation $\phi$; in this and later figures it is convenient to use the ribbon graph representation for better visualization.

\begin{figure}[htb]
\centering
	\includegraphics[width=0.9\textwidth]{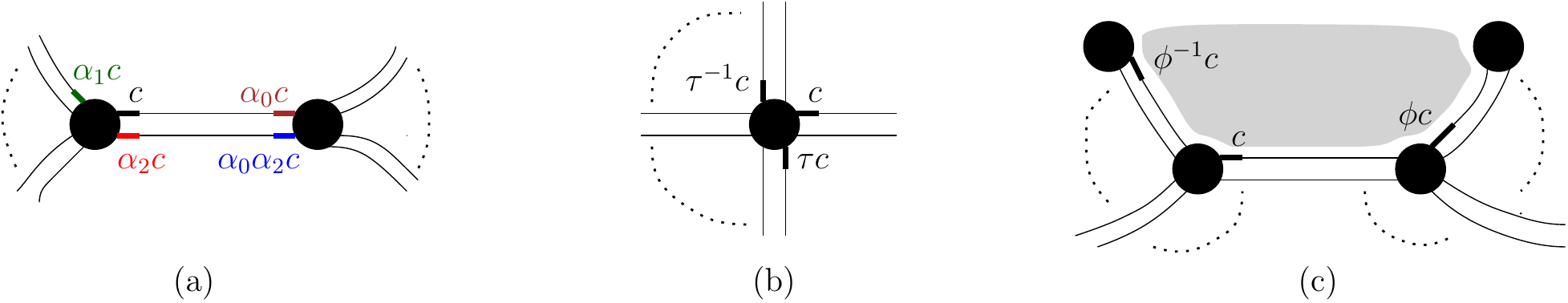}
	\caption{(a) The permutations $\alpha_0, \alpha_1, \alpha_2$ defining a map acting on a cross $c$, (b) vertex permutation $\tau=\alpha_1\alpha_2$, (c) face permutation $\phi=\alpha_1\alpha_0$. The shaded area indicates a face, while the dots indicate that there might be some additional edges in that region.}
	\label{fig:map_involutions}
\end{figure}

For a map $M=(V,E,F)$ specified by a tuple  $(C,\alpha_0,\alpha_1,\alpha_2)$ we may  write $M\equiv(C,\alpha_0,\alpha_1,\alpha_2)$ when spelling out what $V,E$ and $F$ are is not relevant to the context.  We shall also write   $v\equiv (\; \mathbf x\;)\;(\; \alpha_2\overline{\mathbf x}\;)$, $e \equiv(\;\; c \;\;\; \alpha_0\alpha_2 c\;\;)\;(\;\; \alpha_0 c \;\;\; \alpha_2 c\;\;)$ and $z \equiv (\; \mathbf y\;)\;(\; \alpha_0\overline{\mathbf y}\;)$ to indicate the cycles associated to a vertex $v$, an edge $e$, and a face $z$.

Let $v\equiv (\; \mathbf x\;)\;(\; \alpha_2\overline{\mathbf x}\;)$ be a vertex such that a cross $c$ or $\alpha_0 c$ appears in $\mathbf x$. We say that $v$ is {\em incident} with the edge $e\equiv (\;\; c \;\;\; \alpha_0\alpha_2 c\;\;)\;(\;\; \alpha_0 c \;\;\;  \alpha_2 c\;\;)$ and, similarly, edge $e$ is \emph{incident} with a face $z$ if one cross of $e$ appears in the corresponding pair of cycles of $\phi$ associated with $z$. With this definition of incidence, $\Gamma=(V,E)$ is the {\em underlying graph} of the map $M=(V,E,F)$ specified by $(C,\alpha_0,\alpha_1,\alpha_2)$. Isolated vertices of $\Gamma$ are associated with pairs of empty cycles appended to the cycle decomposition of $\tau$, and a corresponding pair of empty cycles is appended to the cycle decomposition of $\phi$ (representing the face surrounding the isolated vertex of $\Gamma$ in the given embedding). 

The {\em degree} of a vertex $v$ is the number of crosses in one of its associated cycles from the cycle decomposition of $\tau$, and the {\em degree} of the face $z$ is the number of crosses in one of its cycles in $\phi$.

 A {\em loop} of $M$ is an edge incident with just one vertex, and a {\em link} 
is an edge incident with two distinct vertices. 
We denote the number of vertices, edges and faces of $M$ by  $\vv(M), \ee(M)$ and $\ff(M)$,\label{def:it_num_vert}\label{def:it_num_edges}\label{def:it_num_faces} respectively.
Each orbit of $\langle \alpha_0,\alpha_1,\alpha_2\rangle$ acting on $C$ is a \emph{connected component} of $M$. Each pair of empty cycles of $\tau$, associated with an isolated vertex, gives its own connected component. 
The number of connected components is denoted by $\kk(M)$.\label{def:it_num_cc}
 A map is \emph{connected} if it has just one connected component (the edgeless map on one vertex is connected). 

In a connected map $M\equiv (C,\alpha_0,\alpha_1,\alpha_2)$, the number $\oo(M)$ of orbits of $\langle  \alpha_0\alpha_2, \alpha_1\alpha_2 \rangle$ acting on $C$ is either
$1$ or $2$~\cite[Theorem X.11]{tutte01}; 
 $M$ is \emph{non-orientable} if $\oo(M)=1$, and \emph{orientable} if $\oo(M)=2$.
Generally, a (not necessarily connected) map $M$ is said to be orientable if all its  components are orientable, and 
non-orientable otherwise. 
The parameter $\oo(M)$ is extended from connected maps to all maps by setting $\oo(M)=\min_i \oo(M_i)$ when $M$ has connected components $\{M_i\}$.

The \emph{Euler characteristic} of a map $M$ is defined as 
\begin{equation*}
\cchi(M)=\vv(M)-\ee(M)+\ff(M),\end{equation*}
which is used to define the \emph{Euler genus} by
\begin{equation*}
\eg(M)=2\kk(M)-\cchi(M),
\end{equation*}
the \emph{genus} by
\begin{equation*}
    \g(M)=\eg(M)/\oo(M),
\end{equation*}
and the \emph{signed genus} by
\begin{equation}\label{eq:gos}\sg(M)=[2\oo(M)-3]\g(M)=\frac{2\oo(M)-3}{\oo(M)}\eg(M)=\begin{cases} \frac{1}{2}\eg(M) & \text{ if } \oo(M)=2\\ -\eg(M) &  \text{ if }\oo(M)=1\end{cases}.\end{equation}  

The parameters $\ee,\ff,\vv,\kk,\cchi,\g$ and $\eg$ 
are additive over connected components: their value on $M$ is the sum of their values over each connected component. (The parameters of orientability $\oo$ and signed genus $\sg$ are not additive over connected components in this way.) 



Two maps $M\equiv (C,\allowbreak \alpha_0,\allowbreak \alpha_1,\allowbreak \alpha_2)$ and $M'\equiv (C', \alpha_0', \alpha_1', \alpha_2')$ are \emph{isomorphic}~\cite[X.4]{tutte01} if they have the same number of isolated vertices and there exists a bijection 
$\beta:C\to C'$ such that $\beta\alpha_ic=\alpha_i'\beta c$ for any $c\in C$ and $i\in \{0,1,2\}$. 
Note that $\beta$ determines naturally a bijection between the vertices, edges and faces of the two maps and their respective vertex and face permutation cycles.

The (geometric) \emph{dual} of a map $M\equiv(C,\alpha_0,\alpha_1,\alpha_2)$ is the map $M^{\ast}\equiv (C,\alpha_2,\alpha_1,\alpha_0)$. 
The effect of swapping $\alpha_0$ and $\alpha_2$ to form the dual gives $\phi=\tau^*$ and $\tau=\phi^*$: the rotational order of edges around a face of $M$  becomes the rotational order of edges around a vertex in $M^*$ and, dually, the rotational order of edges around a vertex of $M$ becomes the rotational order of edges around a face of $M^*$. The graph underlying $M^{\ast}$ is given by $\Gamma^{\ast}=(F,E)$ and the edge-face incidence relation.

The permutation representation of a map described in this section is better suited to our combinatorial approach  than the following equivalent topological definition: 

\begin{quote}A map is a graph $\Gamma$ embedded on a  surface $\Sigma=\Sigma_1\sqcup \cdots \sqcup \Sigma_k$ (each $\Sigma_i$ is a compact, connected $2$-manifold without boundary): vertices are points and edges are closed curves connecting pairs of vertices that do not intersect, except possibly at their endpoints (informally, the graph is drawn on the surface in such a way that two edges do not cross).
\end{quote}

  The idea behind  Definition~\ref{def:map_involutions_labelled} is to describe an embedding of a graph in a surface by explaining how to go from a point next to a given vertex, edge or face to a point next to the ``adjacent'' vertex, edge or face. In order to do this, edges are cut across and lengthwise into four parts or crosses\footnote{As Tutte explains~\cite[X.10.2]{tutte01}, the term ``cross'' was suggested by their representation in diagrams as two crossed arrows, one along an edge and the other across it, these arrows being reversed by two commuting involutions.  Bryant and Singerman~\cite{BS85} call crosses {\em blades}.
As mentioned before, the maps given by Definition~\ref{def:map_involutions_labelled} can be seen as generalized maps \cite{lienhardt91,lienhardt94, damlien03}, in which context these crosses are called {\em darts}. For Tutte, the term ``dart'' is used for a directed edge, corresponding to a pair of crosses related by the product of the two commuting involutions.} and the involutions describing adjacency act on this set of crosses (see Figure \ref{fig:map_involutions}). 

From a connected combinatorial map as  given by Definition~\ref{def:map_involutions_labelled}, we can obtain a surface~$\Sigma$ by gluing open discs along the faces (following one of the permutations of the crosses of the face); this leads to an embedding of the underlying graph $\Gamma$ of the map in~$\Sigma$ with the property that removing the vertices and edges of $\Gamma$ from $\Sigma$ leaves a set of connected components each homeomorphic to a disc. That is to say, a {\em 2-cell embedding} of $\Gamma$. Therefore, a correspondence holds between connected maps and $2$-cell embeddings of connected graphs in a surface. 
In particular, each connected component of the graph is $2$-cell embedded in its own connected surface. 
The genus and orientability of a connected surface $\Sigma$ coincide with the genus and orientability of a connected map whose underlying graph has been 2-cell-embedded in~$\Sigma$. 

 \subsection{Deletion and contraction} \label{sec:del_con}


Deletion and contraction of an edge $e$ of a map $M$ are formally defined in~\cite{GLRV20}, and denoted by $M\backslash e$ and $M/e$, respectively.\footnote{The effect of deletion and contraction of an edge on the involution 
$\alpha_1$ is described in the proof of Lemma~\ref{lem:riffle_delete_contract}.}  In the case of a link, these operations have the same effect as deletion and contraction in the underlying graph of $M$. Deleting a loop of $M$ likewise corresponds in the underlying graph to deleting the loop. However, the result of contracting a loop depends on its type, and is not the same as deleting it in the underlying graph: a loop is \emph{twisted} if a cross $c$ of the edge and $\alpha_0 c$ belong to the same permutation cycle of $\tau$, otherwise, the loop is {\em non-twisted} (a cross $c$ of the edge and $\alpha_2\alpha_0 c$ belong to the same permutation cycle of $\tau$). Contraction of a twisted loop has the effect of ``flipping over'' one side of it,  and contraction of a non-twisted loop does not correspond at all to contracting the loop in the underlying graph: indeed, contracting a non-twisted loop splits its incident vertex into two vertices. 

Just as deletion and contraction of edges in a graph are dual operations (with respect to matroid duality, the relevant matroid here being the cycle matroid of the graph), so deletion and contraction of edges in maps are dual operations (with respect to geometric duality). 
Thus~\cite[Proposition~A.5]{GLRV20},
 \begin{equation}\label{eq:dual_del_con}
 (M/e)^{\ast}=M^{\ast}\backslash e \qquad \text{ and } \qquad (M\backslash e)^{\ast}=M^{\ast} / e.
 \end{equation}

Similarly to graphs, the order in which deletion and contraction of the edges is performed is immaterial, which allows us to define deletion and contraction of a subset of edges. We let $M\backslash A$ denote the result of deleting the edges in $A$ (in any order), and $M/B$ the result of contracting the edges in $B$ (in any order).  
As shown in~\cite[Lemma~A.6]{GLRV20}, for a map $M$ and disjoint subsets $A, B$  of edges of $M$ it follows that
$(M\setminus A)/ B=(M/ B)\setminus A$.

For the purposes of this paper we only need to know the effect of these operations on the vertex and face permutations, as described in Tables~\ref{tab:del_con_tau} and~\ref{tab:del_con_phi} (actually, the definition of deletion and contraction can be retrieved from these tables). For the vertex permutation $\tau$, the effect depends on three types of edges that have already been defined: a link, a non-twisted loop, and a  twisted loop. For the face permutation $\phi$, we need three further edge types: a {\em dual link}, {\em dual twisted loop} or {\em dual non-twisted loop} of $M$, defined respectively as a link, twisted loop or non-twisted loop of the dual map $M^{\ast}$. Combinations of these give nine edge types; see Figure~\ref{fig:edge_types} for some examples.

{\footnotesize
\begin{table}[ht]
\centering
\small{
\begin{tabular}{|p{3.1cm}||p{3.5cm}|p{3.5cm}|p{3.5cm}|}
\hline
    edge type in $M$ of $e$\newline $(\,c\;\alpha_0\alpha_2 c\,)\:(\,\alpha_0 c \;\alpha_2 c\,)$ &
     link & non-twisted loop & twisted loop  \\
    \hline \hline
cycle pair(s) in $\tau$ 
&
 $(c \:\: \mathbf x) (\alpha_2 c \:\: \alpha_2 \overline{\mathbf x})$ \newline 
 $(\alpha_0\alpha_2 c \:\: \mathbf y)\:( \alpha_0 c \:\: \alpha_2 \overline{\mathbf y})$
 &
 $(c \:\: \mathbf x \:\:\alpha_0\alpha_2 c\:\: \mathbf y)$\newline $\:\:(\alpha_2 \overline{\mathbf x} \:\: \alpha_2 c \:\: \alpha_2 \overline{\mathbf y} \:\: \alpha_0 c  )$ 
 &
 $(c \:\: \mathbf x \:\: \alpha_0 c \:\: \mathbf y) $\newline $\:\:(\alpha_2 \overline{\mathbf x} \:\: \alpha_2 c \:\: \alpha_2 \overline{\mathbf y} \:\:  \alpha_0\alpha_2 c )$\\

 \hline
 \hline
 cycle pair(s) in $\tau$ \newline
 after deleting $e$ &
 $( \mathbf x )\:( \alpha_2 \overline{\mathbf x} )$ \newline
 $(\mathbf y )\:(  \alpha_2 \overline{\mathbf y})$
 &
 $(\mathbf x \:\: \mathbf y)\:(\alpha_2 \overline{\mathbf x}  \:\: \alpha_2 \overline{\mathbf y})$
 &
 $( \mathbf x \:\: \mathbf y)\:( \alpha_2 \overline{\mathbf x}  \:\: \alpha_2 \overline{\mathbf y})$
 \\
 \hline
 $\vv(M\backslash e)=$ &$\vv(M)$&$\vv(M)$&$\vv(M)$\\
 \hline
 \hline
 cycle pair(s) in $\tau$ \newline
 after contracting $e$ 
 &
 $(\mathbf x \:\: \mathbf y)\:( \alpha_2 \overline{\mathbf x} \:\: \alpha_2\overline{\mathbf y})$
&
 $( \mathbf x )\:(  \alpha_2 \overline{\mathbf x} )$\newline
 $( \mathbf y )\:(  \alpha_2 \overline{\mathbf y} )$
&
$( \mathbf x \:\: \alpha_2 \overline{\mathbf y})\:( \alpha_2 \overline{\mathbf x} \:\: \mathbf y)$\\
 \hline
 $\vv(M/e)=$ &$\vv(M)-1$&$\vv(M)+1$&$\vv(M)$\\
\hline

\end{tabular}}
\caption{\small The effect of deletion and contraction of an edge on the vertex permutation~$\tau$, in which $\mathbf x$ and $\mathbf y$ are (possibly empty) sequences of crosses. } \label{tab:del_con_tau}
\end{table}}

\begin{table}[ht]
\centering
\small{
\begin{tabular}{|p{3.1cm}||p{3.5cm}|p{3.5cm}|p{3.5cm}|}
\hline
    edge type in $M$ of $e$\newline $(\,c\;\alpha_0\alpha_2 c\,)\:(\,\alpha_0 c \;\alpha_2 c\,)$ &
     dual link & dual non-twisted loop & dual twisted loop  \\
    \hline \hline
cycle pair(s) in $\phi$ 
&
 $(c \:\: \mathbf x) (\alpha_0 c \:\: \alpha_0 \overline{\mathbf x})$ \newline 
 $(\alpha_0\alpha_2 c \:\: \mathbf y)\:( \alpha_2 c \:\: \alpha_0 \overline{\mathbf y})$
 &
 $(c \:\: \mathbf x \:\:\alpha_0\alpha_2 c\:\: \mathbf y)$\newline $\:\:(\alpha_0 \overline{\mathbf x} \:\: \alpha_0 c \:\: \alpha_0 \overline{\mathbf y} \:\: \alpha_2 c  )$ 
 &
 $(c \:\: \mathbf x \:\: \alpha_2 c \:\: \mathbf y) $\newline $\:\:(\alpha_0 \overline{\mathbf x} \:\: \alpha_0 c \:\: \alpha_0 \overline{\mathbf y} \:\:  \alpha_0\alpha_2 c )$\\

 \hline
 \hline
 cycle pair(s) in $\phi$ \newline
 after contracting $e$ &
 $( \mathbf x )\:( \alpha_0 \overline{\mathbf x} )$ \newline
 $(\mathbf y )\:(  \alpha_0 \overline{\mathbf y})$
 &
 $(\mathbf x \:\: \mathbf y)\:(\alpha_0 \overline{\mathbf x}  \:\: \alpha_0 \overline{\mathbf y})$
 &
 $( \mathbf x \:\: \mathbf y)\:( \alpha_0 \overline{\mathbf x}  \:\: \alpha_0 \overline{\mathbf y})$
 \\
 \hline
 $\ff(M/ e)=$ &$\ff(M)$&$\ff(M)$&$\ff(M)$\\
 \hline
 \hline
 cycle pair(s) in $\phi$ \newline
 after deleting $e$ 
 &
 $(\mathbf x \:\: \mathbf y)\:( \alpha_0\overline{\mathbf x} \:\: \alpha_0\overline{\mathbf y})$
&
 $( \mathbf x )\:(  \alpha_0 \overline{\mathbf x} )$\newline
 $( \mathbf y )\:(  \alpha_0 \overline{\mathbf y} )$
&
$( \mathbf x \:\: \alpha_0 \overline{\mathbf y})\:( \alpha_0 \overline{\mathbf x} \:\: \mathbf y)$\\
 \hline 
 $\ff(M\backslash e)=$ &$\ff(M)-1$&$\ff(M)+1$&$\ff(M)$\\
\hline

\end{tabular}}
\caption{\small The effect of contraction and deletion of an edge on the face permutation~$\phi$, in which $\mathbf x$ and $\mathbf y$ are (possibly empty) sequences of crosses.}
\label{tab:del_con_phi}
\end{table}

While these nine edge types are sufficient to determine the effect of deletion and contraction on the map parameters $\vv(M)$, $\ee(M)$ and $\ff(M)$, in order to determine the effect on the parameters $\kk(M)$ and $\oo(M)$, and hence, via Euler's relation, on $\g(M)$, two further edge types are needed: a {\em bridge} and a {\em dual bridge}. The defining property of a bridge $e$ is that 
\begin{equation}\label{eq:d_effect__k}\kk(M\backslash e)=\begin{cases} \kk(M)+1 & \mbox{if $e$ is a bridge,}\\
  \kk(M) & \mbox{otherwise.}\end{cases}\end{equation}
A dual bridge $e$ is defined with the same property just replacing $\kk(M\backslash e)$ by $\kk(M/ e)$.
 
A bridge is a special type of link and dual non-twisted loop (it is a link as deleting a loop on a vertex $v$ does not affect the connection of $v$ to other vertices, and it is a dual non-twisted loop as the deletion of an edge $e$ in disconnecting $M$ increases the number of faces). Similarly, 
a dual bridge is a special type of non-twisted loop and dual link. A (dual) link that is not a bridge is called a (dual) {\em ordinary} link.

Figure~\ref{fig:edge_types} shows maps that between them contain examples of all eleven edge types.
  
\begin{figure}[htb]
\centering
\includegraphics[width=0.9\textwidth]{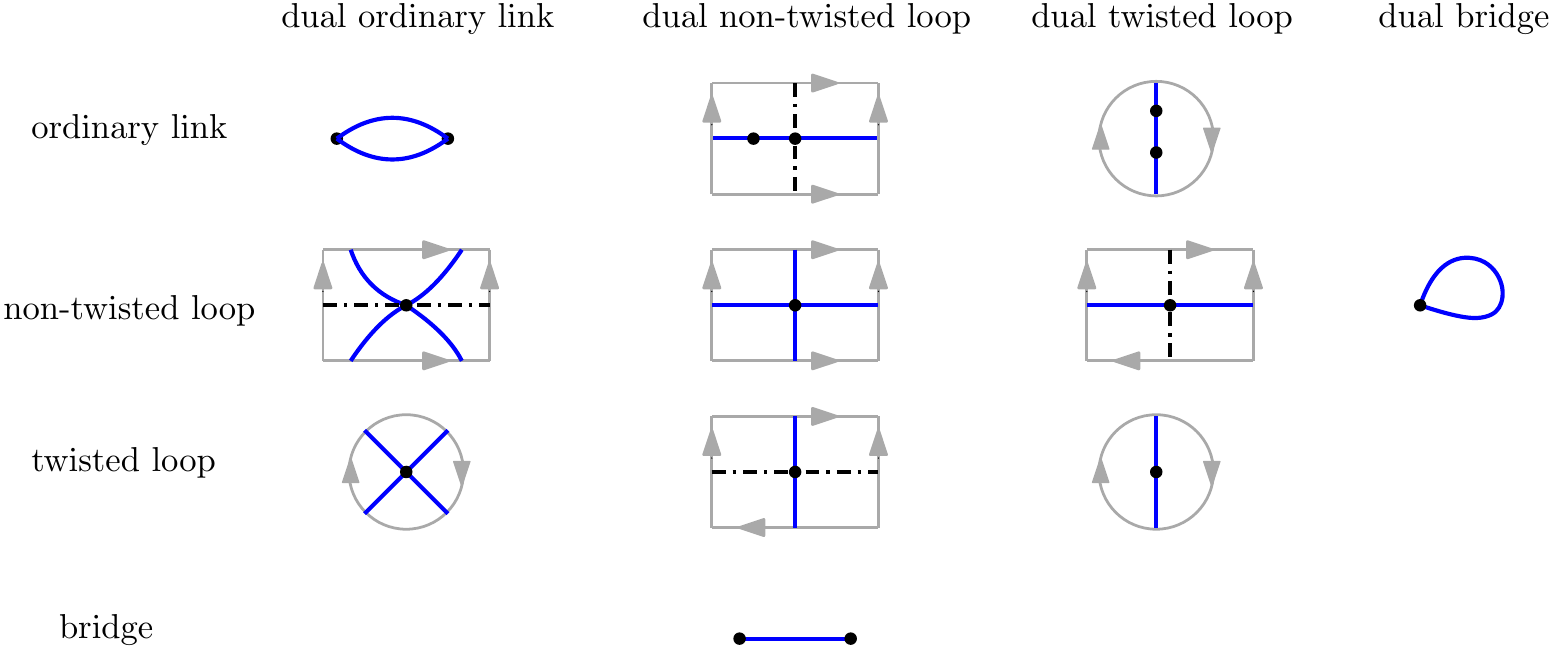}
\caption{\small Examples of maps that between them contain all eleven edge types; solid edges (in blue) are of the given type. Transpose entries in the table are dual maps (those on the diagonal self-dual).}
\label{fig:edge_types}
\end{figure}

\paragraph{Euler characteristic and Euler genus.}

The effect of deletion and contraction of an edge $e$ of $M$ on the Euler characteristic $\cchi(M)$  is recorded in Table~\ref{tab:d_effect_chi}  using the information given in Tables~\ref{tab:del_con_tau} and~\ref{tab:del_con_phi}. As can be seen from the table -- and as shown by Tutte~\cite[Theorem X.26]{tutte01} -- contracting a link does not change the Euler characteristic (dually, deleting a dual link does not change it either). 

      \begin{table}[ht]
        \centering
    
\begin{tabular}{|p{3.5cm}||>{\centering}p{0.7cm}|c|>{\centering}p{1.5cm}|c|>{\centering}p{1.2cm}|c|}
\hline
\multirow{2}{*}{
edge type 
in $M$ of $e$}& 
\multicolumn{2}{c|}{dual link} &
\multicolumn{2}{c|}{dual non-twisted loop} &
\multicolumn{2}{c|}{dual twisted loop} \\
\hhline{~------}
&
  $\backslash e$ &   $/e$ & $\backslash e$ &   $/e$ & $\backslash e$ &   $/e$ \\
  \hline \hline
link & 0& 0& +2& 0& +1 & 0 \\
\hline
non-twisted loop & 0& +2& +2& +2& +1 & +2 \\
\hline
twisted loop & 0& +1& +2& +1& +1 & +1 \\
\hline
\end{tabular}
\caption{\small Effect of deletion and contraction on the Euler characteristic: the difference $\cchi(M\backslash e)-\cchi(M)$ (in the column headed $\backslash e$) and $\cchi(M/e)-\cchi(M)$ (in the column headed $/e$) according to the nine possible edge types for $e$ (without distinguishing bridges).}
        \label{tab:d_effect_chi}
    \end{table}

   \begin{table}[ht]
{\footnotesize
\begin{tabular}{|p{2.9cm}||c|c|c|>{\centering}p{1cm}|c|>{\centering}p{1cm}|c|}
\hline
& 
\multicolumn{3}{c|}{
dual link}& \multicolumn{2}{c|}{
}&\multicolumn{2}{c|}{
}\\
\hhline{~---~~~~}
& 
dual ord. link
& 
dual ord. link & 
dual bridge
& \multicolumn{2}{c|}{\multirow{-2}{*}{
dual non-tw loop}}&
\multicolumn{2}{c|}{
\multirow{-2}{*}{
dual twisted loop}}\\
\hhline{~-------}
\multirow{-3}{=}{ edge type  in $M$ of $e$}&
$\backslash e$ &\multicolumn{2}{c|}{
$/e$} &
$\backslash e$ & 
$/e$&
$\backslash e$&
$/e$
\\
\hline \hline
 link ordinary & \multirow{2}{*}{0}&\multirow{2}{*}{0}&&-2&\multirow{2}{*}{0}&\multirow{2}{*}{-1}&\multirow{2}{*}{0}\\
 \cline{1-1}\cline{5-5}
 link bridge & &&&0&&& \\
 \hline 
 non-twisted loop & 0 & -2 & 0 & -2 & -2 & -1 & -2 \\
 \hline
 twisted loop & 0 & -1 &  & -2 & -1 & -1 & -1 \\
 \hline
\end{tabular}
}
\caption{\small Effect of deletion and contraction on the Euler genus: the difference $\eg(M\backslash e)-\eg(M)$ (in the column headed $\backslash e$) and $\eg(M/e)-\eg(M)$ (in the column headed $/e$) according to the eleven possible edge types for $e$ (non-twisted loops that are dual links are divided into those where the dual link is ordinary or a dual bridge, analogously for links that are dual non-twisted loops.)} 
        \label{tab:d_effect_s}
    \end{table}

Table~\ref{tab:d_effect_chi} together with the distinction of bridges and dual bridges yield Table~\ref{tab:d_effect_s}, which shows the effect of deletion and contraction on the Euler genus $\eg(M)$ according to the eleven edge types. The Euler genus  is unchanged when deleting a dual link or a bridge; otherwise it decreases. 



\paragraph{Orientability.}
    Tutte showed~\cite[Theorems X.26 and X.28]{tutte01} that the contraction of a link does not change orientability (dually, deleting a dual link does not change orientability either); we include this result as statement \ref{en:effect_del_con_o_2} in the following lemma (its proof is omitted).

\begin{lemma}\label{lem:effect_del_con_o}
Let $e$ be an edge of a connected map $M$.
\begin{enumerate}[label=(\roman*)]
\item\label{en:effect_del_con_o_1} If $\oo(M)=2$ then $\oo(M\backslash e)=\oo(M)=\oo(M/e)$. 
 \item\label{en:effect_del_con_o_2} If $e$ is a dual link then $\oo(M\backslash e)=\oo(M)$; if $e$ is a link then $\oo(M/e)=\oo(M)$.
\item \label{en:effect_del_con_o_3} If $e$ is a bridge or a dual bridge then $\oo(M\backslash e)=\oo(M)=\oo(M/e)$. 
 \end{enumerate}
\end{lemma}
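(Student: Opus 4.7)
My plan exploits a bipartite reformulation of orientability for connected maps: $\oo(M) = 2$ if and only if there exists an \emph{orientation function} $\epsilon\colon C \to \{+1,-1\}$ satisfying $\epsilon(\alpha_i c) = -\epsilon(c)$ for every $c \in C$ and every $i \in \{0,1,2\}$. Indeed, the two orbits of $H = \langle\alpha_0\alpha_2, \tau\rangle$ correspond to the fibres of such an $\epsilon$; since $\alpha_0\alpha_2, \alpha_1\alpha_2 \in H$ preserve orbits, each $\alpha_i$ swaps the two orbits if and only if $\alpha_2$ does, and connectedness of $M$ forces $\alpha_2$ to swap (otherwise $\langle\alpha_0,\alpha_1,\alpha_2\rangle$ would preserve both orbits, contradicting its transitive action on $C$).

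For part~(i), suppose $\oo(M)=2$ with orientation $\epsilon$. Deletion or contraction of $e$ removes the four crosses $\{c,\alpha_0 c,\alpha_2 c,\alpha_0\alpha_2 c\}$ from $C$, leaving $C'$; the restricted $\alpha_0',\alpha_2'$ force $\epsilon|_{C'}$ to satisfy the sign conditions for $i=0,2$ automatically. The new $\alpha_1'$ coincides with $\alpha_1$ except where an original $\alpha_1$-pair included a removed cross, in which case Tables~\ref{tab:del_con_tau} and~\ref{tab:del_con_phi} specify the replacement pair. I would then do a case-by-case check across the six (edge-type)$\times$(deletion/contraction) combinations, confirming that the new $\alpha_1'$-pair joins crosses of opposite $\epsilon$-sign: for example, in twisted-loop deletion the new pair $\{x_1,\alpha_2 y_q\}$ satisfies $\epsilon(x_1) = \epsilon(c) = -\epsilon(\alpha_2 y_q)$, which follows by tracing the $\tau$-cycle structure in $M$.

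For part~(ii), the duality $(M/e)^{\ast} = M^{\ast}\backslash e$ from~\eqref{eq:dual_del_con} together with the easy identity $\oo(M^{\ast}) = \oo(M)$ (since $\langle\alpha_0\alpha_2,\alpha_1\alpha_2\rangle = \langle\alpha_0\alpha_2,\alpha_1\alpha_0\rangle$) reduces matters to the case of link contraction. The forward direction is part~(i). Conversely, given an orientation $\epsilon'$ of $M/e$, I would extend to $\epsilon$ on $M$ by preserving values on crosses outside $e$ and setting $\epsilon(c) = \epsilon(\alpha_0\alpha_2 c) = \epsilon'(x_1)$, $\epsilon(\alpha_0 c) = \epsilon(\alpha_2 c) = -\epsilon'(x_1)$; consistency at the two endpoints of $e$ follows from the fact (Table~\ref{tab:del_con_tau}) that link contraction concatenates two vertex cycle pairs into a single one without twisting, so the sign relations match automatically at the split boundary.

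For part~(iii), a bridge is a link and a dual bridge is a dual link, so part~(ii) already handles bridge contraction and dual bridge deletion. Only bridge deletion (and by duality dual bridge contraction) remains. Since $M\backslash e$ splits into components $M_1, M_2$ with $\oo(M\backslash e) = \min(\oo(M_1),\oo(M_2))$, the forward direction again follows from part~(i). For the converse, given orientations $\epsilon_1, \epsilon_2$ of $M_1, M_2$, I would build $\epsilon$ on $M$ by using $\epsilon_1$ on $M_1$-crosses, a possibly negated copy of $\epsilon_2$ on $M_2$-crosses, and the standard sign pattern on the four bridge crosses; the single ``global flip'' freedom on $\epsilon_2$ is exactly what is needed to satisfy both $\alpha_1$-constraints at the two endpoints of $e$ simultaneously. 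The main obstacle will be the exhaustive case-checking in part~(i); the global-flip argument in part~(iii), while conceptually transparent, likewise requires carefully verifying that one degree of freedom suffices to align both bridge endpoints.
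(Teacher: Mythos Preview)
Your approach is correct and takes a genuinely different route from the paper's for part~(iii). Both arguments rest on the same orbit structure, but you package it through the orientation function $\epsilon$ and argue by restriction/extension, whereas the paper works directly with the auxiliary graph $\mathcal{G}_\tau$ whose vertices are the cycles of $\tau$ and whose edges record the $\alpha_0\alpha_2$-relation between them.

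A few remarks. In part~(i) you can avoid the six-case split: regardless of edge type, deletion replaces the $\alpha_1$-transpositions through $e$ by the pair $\{\alpha_1 c,\alpha_1\alpha_2 c\}$ (and its $\alpha_0$-image), while contraction uses $\{\alpha_1 c,\alpha_1\alpha_0 c\}$; in either case the two crosses differ by an $\alpha_1\alpha_j$ with $j\in\{0,2\}$, hence have opposite $\epsilon$-sign automatically. This is morally the paper's one-line argument that ``deletion removes crosses from the $\tau$-cycles and so cannot merge orientation classes,'' just phrased through $\epsilon$. The paper omits the proof of~(ii) entirely, deferring to Tutte; your extension argument fills that gap cleanly.

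For part~(iii) the contrast is sharpest. The paper shows that when $\oo(M)=1$ and $e$ is a bridge, deleting $e$ removes exactly two edges from the connected graph $\mathcal{G}_\tau$, so $\mathcal{G}_{\tau'}$ cannot split into four components, forcing at least one side of $M\backslash e$ to remain non-orientable. Your argument instead supposes both components of $M\backslash e$ orientable and glues $\epsilon_1,\pm\epsilon_2$ across the bridge using the single global sign freedom on $\epsilon_2$. What your approach buys is uniformity: the same restriction/extension mechanism handles all three parts. What the paper's approach buys is that no sign bookkeeping at the bridge endpoints is required; the connectivity count does the work in one stroke.
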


\begin{proof}
Let $M\equiv(C,\alpha_0,\alpha_1,\alpha_2)$, and $e\equiv (\;\; c\;\;\; \alpha_0\alpha_2 c\;\;)\;(\;\; \alpha_0 c\;\;\; \alpha_2 c\;\;)$. If $\oo(M)=2$ then $\oo(M\backslash e)=2$, as deletion simply removes the crosses of $e$ from the cycles of the vertex permutation $\tau$ of $M$, and so it cannot merge the two orientation classes (orbits under the action of $\langle\tau, \alpha_0\alpha_2\rangle$).  By the same argument applied to the face permutation $\phi$ (or by duality, $\oo(M^*)=\oo(M)$) we have $\oo(M/e)=2$. This proves statement \ref{en:effect_del_con_o_1}.

When $\oo(M)=2$ statement \ref{en:effect_del_con_o_3} follows from statement \ref{en:effect_del_con_o_1}. Suppose now that edge $e$ is a bridge and $\oo(M)=1$. The group $\langle \tau,\alpha_0\alpha_2\rangle$ acting on $C$ has one orbit: the graph $\mathcal{G}_{\tau}$ on cycles of $\tau$, in which two cycles are joined by an edge if there is a cross $a$ in one such that $\alpha_0\alpha_2 a$ lies in the other, is connected (as the number of connected components of $\mathcal{G}_{\tau}$ corresponds to the number of orbits of $\langle \tau,\alpha_0\alpha_2\rangle$ acting on $C$).
The cycles  
\[(\;\;c \;\;\; \mathbf x\;\;) \;(\;\;\alpha_2 c \;\;\; \alpha_2 \overline{\mathbf x}\;\;)\qquad \text{ and }\qquad (\;\;\alpha_0\alpha_2c\;\;\; \mathbf y\;\;)\;(\;\;\alpha_0 c \;\;\; \alpha_2 \overline{\mathbf y}\;\;)\]
of $\tau$ containing crosses of  $e$ have an edge in $\mathcal{G}_{\tau}$ connecting $(\;\;c \;\;\; \mathbf x\;\;)$ and $(\;\;\alpha_0\alpha_2c\;\;\;\mathbf y\;\;)$, and an edge connecting $(\;\;\alpha_2 c \;\;\; \alpha_2 \overline{\mathbf x}\;\;)$ and $(\;\; \alpha_0 c \;\;\; \alpha_2 \overline{\mathbf y}\;\;)$.
After deleting $e$, these cycles of $\tau$ become the cycles $(\;\; \mathbf x\;\;) ( \;\;\alpha_2 \overline{\mathbf x}\;\;)$ and $(\;\; \mathbf y\;\;)\;(\;\; \alpha_2 \overline{\mathbf y}\;\;)$ of the vertex permutation $\tau '$ of $C\backslash \{c,\alpha_0\alpha_2 c, \alpha_0 c, \alpha_2 c\}$, and the other cycles of $\tau$ remain the same. 
The edge previously joining  $(\;\;c \;\;\; \mathbf x\;\;)$ to $(\;\;\alpha_0\alpha_2c\;\;\; \mathbf y\;\;)$ and that joining $(\;\;\alpha_2 c \;\;\;\alpha_2 \overline{\mathbf x}\;\;)$ to $(\;\;\alpha_0 c \;\;\; \alpha_2 \overline{\mathbf y}\;\;)$ both disappear in the new graph $\mathcal{G}_{\tau'}$.  (Since $e$ is a bridge, the crosses in $\mathbf x$ are not in the same orbit as crosses of $\mathbf y$ under the action of $\langle \alpha_0,\alpha_1,\alpha_2\rangle$ when restricted to $C\backslash\{c,\alpha_0\alpha_2 c, \alpha_0 c, \alpha_2 c\}$.) However, all other edges in $\mathcal{G}_{\tau}$ remain. Therefore, the graph $\mathcal{G}_{\tau'}$ cannot have four connected components, that is, the group $\langle \tau',\alpha_0\alpha_2\rangle$ acting on $C\backslash \{c,\alpha_0\alpha_2 c, \alpha_0 c, \alpha_2 c\}$ cannot have four orbits (there is either a single orbit of the action of $\langle \tau', \alpha_0\alpha_2\rangle$ that contains the crosses of $\mathbf x$ or a single orbit that contains the crosses of $\mathbf y$). This implies that both connected components of $M\backslash e$ cannot be orientable, and so $\oo(M\backslash e)=\oo(M)=1$.

 Finally, we prove that $M/e$ is non-orientable as the single orbit of $\langle \tau, \alpha_0\alpha_2\rangle$ acting on $C$ remains a single orbit in $M/e$. After the contraction of $e$,  the cycles of $\tau$ containing crosses of $e$ become
 $(\;\;\mathbf x \;\;\; \mathbf y\;\;)\;(\;\;\alpha_2\overline{\mathbf x} \;\;\; \alpha_2\overline{\mathbf y}\;\;)$. Then,
 the orbit containing the crosses of $\mathbf x$ coincides with that containing crosses of $\mathbf y$ under the action of $\langle \tau, \alpha_0\alpha_2\rangle$  on $C\backslash \{c,\alpha_0\alpha_2 c, \alpha_0 c, \alpha_2 c\}$. Hence, $\oo(M\backslash e)=1$.
 
 The case of a dual bridge in statement \ref{en:effect_del_con_o_3} then follows by $\oo(M^*)=\oo(M)$, $(M\backslash e)^*=M/e$, and $(M/e)^*=M\backslash e$.
\end{proof}

\paragraph{Genus and signed genus.} 
In the following two lemmas we highlight those properties of the (signed) genus with respect to deletion and contraction of an edge that we shall have need for later.

\begin{lemma} \label{l.link_equiv}
Let $e$ be an edge of a map $M$. Then,

\begin{enumerate}[label=(\roman*)]
    \item 
    \label{en:lem12_i} $\sg(M\backslash e)=\sg(M)$ if and only if 
$e$ is a dual link  or a bridge.
\item 
\label{en:lem12_ii} $\sg(M/ e)=\sg(M)$ if and only if 
$e$ is a link  or a dual bridge.
    \item 
    \label{en:lem12_iii}
    $\sg(M\backslash e)=\sg(M)=\sg(M/e)$ if and only if $e$ is a link and dual link, a bridge, or a dual bridge.
    \end{enumerate}
\end{lemma}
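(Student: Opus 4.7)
The plan is to prove (i) directly from the tables of Section~\ref{sec:del_con} together with Lemma~\ref{lem:effect_del_con_o}, then to deduce (ii) from (i) by geometric duality, and finally to derive (iii) as the Boolean conjunction of (i) and (ii). The key observation that makes the converse direction of (i) work cleanly is that the sign of $\sg$ records orientability: from~\eqref{eq:gos}, orientable maps satisfy $\sg(M)=\eg(M)/2\geq 0$, while non-orientable maps have $\eg(M)\geq 1$ (any non-orientable connected component contributes at least $1$ to the additive parameter $\eg$) and hence $\sg(M)=-\eg(M)\leq -1$. Consequently, any two maps with the same signed genus must share the same orientability.

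For the forward direction of (i), assume $e$ is a dual link or a bridge. Then Table~\ref{tab:d_effect_s} gives $\eg(M\backslash e)=\eg(M)$, and parts~\ref{en:effect_del_con_o_2} and~\ref{en:effect_del_con_o_3} of Lemma~\ref{lem:effect_del_con_o} give $\oo(M\backslash e)=\oo(M)$, so~\eqref{eq:gos} yields $\sg(M\backslash e)=\sg(M)$. For the converse, the observation above forces $\oo(M\backslash e)=\oo(M)$ as soon as $\sg(M\backslash e)=\sg(M)$, after which~\eqref{eq:gos} reduces the hypothesis to $\eg(M\backslash e)=\eg(M)$; inspecting the entries of Table~\ref{tab:d_effect_s} shows that this equality holds precisely when $e$ is a dual link or a bridge.

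Part (ii) I will obtain by applying (i) to the dual map $M^{\ast}$. By~\eqref{eq:dual_del_con}, $\sg(M/e)=\sg(M^{\ast}\backslash e)$ and I also need $\sg(M^{\ast})=\sg(M)$; the latter holds because duality swaps $\vv$ with $\ff$ while preserving $\ee$ and $\kk$, and the identity $\alpha_1\alpha_2=(\alpha_1\alpha_0)(\alpha_0\alpha_2)$ gives $\langle\alpha_0\alpha_2,\alpha_1\alpha_2\rangle=\langle\alpha_0\alpha_2,\alpha_1\alpha_0\rangle$, whence $\oo(M^{\ast})=\oo(M)$. Since ``dual link of $M^{\ast}$'' is the same as ``link of $M$'' and ``bridge of $M^{\ast}$'' the same as ``dual bridge of $M$'', part (i) applied to $M^{\ast}$ translates directly into (ii) for $M$. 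Finally, (iii) is the conjunction (i)$\wedge$(ii). Expanding $((\text{dual link})\vee(\text{bridge}))\wedge((\text{link})\vee(\text{dual bridge}))$ and using that every bridge is a link, every dual bridge is a dual link, and no edge can simultaneously be a bridge (a link) and a dual bridge (a non-twisted loop), the four Boolean combinations collapse to the three stated cases: $e$ is both a link and a dual link, $e$ is a bridge, or $e$ is a dual bridge. The only real subtlety in the argument is the sign/orientability observation in the first paragraph; once that is secured, the rest is direct bookkeeping against Table~\ref{tab:d_effect_s} and Lemma~\ref{lem:effect_del_con_o}.
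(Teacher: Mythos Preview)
Your proof is correct and follows the same overall architecture as the paper's: establish (i) directly, obtain (ii) from (i) via geometric duality, and read (iii) as the conjunction. The forward direction of (i) is identical.

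The one noteworthy difference is in the converse of (i). The paper argues by cases on $\oo(M)$ and $\oo(M\backslash e)$: when $\oo(M)=1$ and $\oo(M\backslash e)=2$ it derives the equations $-\eg(M)=\tfrac12\eg(M\backslash e)$ with $\eg(M\backslash e)\in\{\eg(M)-1,\eg(M)-2\}$ and rules both out by an integrality contradiction, thereby forcing $\oo(M\backslash e)=\oo(M)$. Your route bypasses this case split entirely with the observation that the sign of $\sg$ already encodes orientability (orientable $\Rightarrow\sg\geq 0$, non-orientable $\Rightarrow\sg\leq -1$), so $\sg(M\backslash e)=\sg(M)$ immediately gives $\oo(M\backslash e)=\oo(M)$ and hence $\eg(M\backslash e)=\eg(M)$. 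This is a clean shortcut; the paper's integrality argument reaches the same conclusion but with more work. Your explicit verification that $\sg(M^{\ast})=\sg(M)$ and your Boolean expansion for (iii) are also more detailed than what the paper writes, but substantively equivalent.
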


\begin{proof}
It suffices to prove statement \ref{en:lem12_i}, as \ref{en:lem12_ii} follows by duality and \ref{en:lem12_iii} is the conjunction of \ref{en:lem12_i} and \ref{en:lem12_ii}.
Suppose first that $e$ is a dual link or a bridge. From Table~\ref{tab:d_effect_s} it follows that $\eg(M\backslash e)=\eg(M)$, and by Lemma~\ref{lem:effect_del_con_o} we have $\oo(M\backslash e)=\oo(M)$. This implies $\sg(M\backslash e)=\sg(M)$ by Equation (\ref{eq:gos}).

Assume now that $\sg(M\backslash e)=\sg(M)$ and that $e$ is not a dual link. Then, edge $e$ is a dual loop. If $\oo(M)=2$, by Lemma~\ref{lem:effect_del_con_o}, maps $M$ and $M\backslash e$ have the same orientability. Hence $\eg(M\backslash e)=\eg(M)$, and by Table~\ref{tab:d_effect_s}, this equality holds if and only if $e$ is a bridge.

Suppose now that $\oo(M)=1$ and $\oo(M\backslash e)=2$ (otherwise we could argue as above).  Equation (\ref{eq:gos}) gives $\sg(M)=-\eg(M)$ and $\sg(M\backslash e)=\frac{1}{2}\eg(M\backslash e)$. Further, by Table~\ref{tab:d_effect_s} it follows that $\eg(M\backslash e)\in\{\eg(M)-1, \eg(M)-2\}$, which gives a contradiction as the Euler genus is an integer number. Therefore, $M$ and $M\backslash e$ must have the same orientability, and we can conclude that $e$ is a bridge with the same argument as for the case $\oo(M)=2$.
\end{proof}

By Lemma~\ref{l.link_equiv} and Table~\ref{tab:d_effect_s}, we have $\sg(M\backslash e)=\sg(M)$ if and only if $\eg(M\backslash e)=\eg(M)$.
The effect of edge deletion on the signed genus, $\sg(M)$, is otherwise to decrease its  absolute value,~$\g(M)$: 

\begin{lemma}\label{lem:effect_del_sg}
Let $e$ be an edge of a map $M$.
\begin{enumerate}[label=(\roman*)]
    \item \label{en:effect_del_sg_1} If $M$ is orientable then 
    $\g(M\backslash e)< \g(M)$ unless $e$ is a dual link or a bridge, in which case $\g(M\backslash e)=\g(M)$.
    \item \label{en:effect_del_sg_2} If $M$ and $M\backslash e$ are non-orientable then $\g(M\backslash e)< \g(M)$ unless $e$ is a dual link or a bridge, in which case $\g(M\backslash e)=\g(M)$.
    \item\label{en:effect_del_sg_3} If $M$ is non-orientable and $M\backslash e$ is orientable then  $2\g(M\backslash e)<\g(M)$. 
\end{enumerate} 
  \end{lemma}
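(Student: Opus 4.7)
The plan is to read off the effect on the Euler genus from Table~\ref{tab:d_effect_s} and combine it with the effect on orientability from Lemma~\ref{lem:effect_del_con_o}, using the definition $\g(M)=\eg(M)/\oo(M)$. The key preliminary observation is that every numerical entry in Table~\ref{tab:d_effect_s} is nonpositive, and the entries that equal zero are exactly those for which $e$ is either a dual link or a bridge (i.e.\ the zero entries sit in the column headed by ``dual link'' together with the single zero in the ``dual non-twisted loop / $\backslash e$'' column, which corresponds to a bridge).

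For statement~\ref{en:effect_del_sg_1}, the hypothesis $\oo(M)=2$ together with Lemma~\ref{lem:effect_del_con_o}\ref{en:effect_del_con_o_1} gives $\oo(M\backslash e)=2$, so $\g(M)=\eg(M)/2$ and $\g(M\backslash e)=\eg(M\backslash e)/2$. The claimed strict/equal dichotomy then follows immediately from the sign pattern of Table~\ref{tab:d_effect_s}. For statement~\ref{en:effect_del_sg_2}, both maps are non-orientable, hence $\oo(M)=\oo(M\backslash e)=1$ and $\g=\eg$ in both cases; a second appeal to the table gives the conclusion, once again with equality exactly when $e$ is a dual link or a bridge.

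The only case that needs a genuine argument beyond table-reading is statement~\ref{en:effect_del_sg_3}, where $\oo(M)=1$ and $\oo(M\backslash e)=2$. Here I would argue by contradiction: Lemma~\ref{lem:effect_del_con_o}\ref{en:effect_del_con_o_2} shows that if $e$ is a dual link then $\oo(M\backslash e)=\oo(M)$, while Lemma~\ref{lem:effect_del_con_o}\ref{en:effect_del_con_o_3} gives the same conclusion when $e$ is a bridge. Either possibility contradicts the change of orientability from $1$ to $2$, so $e$ is neither a dual link nor a bridge. By the preliminary observation, the corresponding entry of Table~\ref{tab:d_effect_s} is strictly negative, yielding $\eg(M\backslash e)<\eg(M)$. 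Substituting $\g(M)=\eg(M)$ and $\g(M\backslash e)=\eg(M\backslash e)/2$ produces
\[
2\g(M\backslash e)=\eg(M\backslash e)<\eg(M)=\g(M),
\]
as required.

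I do not anticipate a real obstacle here: the lemma is essentially a bookkeeping consequence of Table~\ref{tab:d_effect_s} and the orientability lemma. The one mild care-point is making sure, when reading the table, to treat the split rows/columns (ordinary link vs.\ bridge, and dual ordinary link vs.\ dual bridge) correctly, so that the assertion ``zero entries $\Longleftrightarrow$ dual link or bridge'' is verified exhaustively across the eleven edge types; the rest of the argument is a direct substitution into $\g=\eg/\oo$.
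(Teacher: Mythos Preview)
Your proposal is correct and follows essentially the same approach as the paper: read off the effect on $\eg$ from Table~\ref{tab:d_effect_s}, use Lemma~\ref{lem:effect_del_con_o} to control orientability, and translate via $\g=\eg/\oo$. The paper organizes case~\ref{en:effect_del_sg_3} slightly differently (first using Lemma~\ref{lem:effect_del_con_o}\ref{en:effect_del_con_o_2} to deduce that $e$ is a dual loop, then ruling out the bridge subcase via Lemma~\ref{lem:effect_del_con_o}\ref{en:effect_del_con_o_3}), but this is the same logic as your exclusion of dual links and bridges, just phrased in two steps rather than one.
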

\begin{proof}
If $M$ is orientable, by Lemma~\ref{lem:effect_del_con_o}(i),  $M\backslash e$ is orientable. Statement~\ref{en:effect_del_sg_1} then follows by Table~\ref{tab:d_effect_s} and the fact that, under the orientability assumptions, $\eg(M)=2\g(M)$ and $\eg(M\backslash e)=2\g(M\backslash e)$.   
Similarly, statement~\ref{en:effect_del_sg_2} follows by Table~\ref{tab:d_effect_s} and the fact that $\eg(M)=\g(M)$ and $\eg(M\backslash e)=\g(M\backslash e)$ (under the orientability assumptions). In both statements, by Lemma~\ref{l.link_equiv}, we have $\g(M\backslash e)=\g(M)$ when $e$ is a dual link or a bridge.

By Lemma~\ref{lem:effect_del_con_o}\ref{en:effect_del_con_o_1} we cannot have $\oo(M)=2$ and $\oo(M\backslash e)=1$, so the remaining case to consider is $\oo(M)=1$ and $\oo(M\backslash e)=2$, which is the hypothesis of statement~\ref{en:effect_del_sg_3}. By Lemma~\ref{lem:effect_del_con_o}\ref{en:effect_del_con_o_2}, in this case the edge $e$ must be a dual loop. Referring to Table~\ref{tab:d_effect_s}, this implies $\eg(M\backslash e)<\eg(M)$ unless $e$ is a bridge, in which case by Lemma~\ref{lem:effect_del_con_o}\ref{en:effect_del_con_o_3} we have $\oo(M\backslash e)=\oo(M)$, contrary to the assumption.
Under the given orientability assumptions, $\eg(M)=\g(M)$ and $\eg(M\backslash e)=2\g(M\backslash e)$.
\end{proof}

\paragraph{Euler genus and signed genus of submaps.}

Let $v$ be a vertex of a map $M=(V,E,F)$. The map $M-v$ obtained by {\em deleting} $v$ is the map that results by first deleting all edges  incident with $v$, and then removing the empty pair of cycles associated with the now isolated vertex~$v$. A map $N$ is an {\em induced submap} of $M$ if $N=M-U$ for some $U\subseteq V$; similarly, $N$ is a \emph{spanning submap} of $M$ if $N=M\backslash A$ for some $A\subseteq E$. An induced submap of a spanning submap of $M$ is called a \emph{submap} of $M$. All types of submaps are said to be {\em proper} when they are distinct from the map.

If $M'$ is a submap of $M$ then $\vv(M')\leq \vv(M)$ and $\ee(M')\leq \ee(M)$, and both the number of components and the number of faces may change in both directions.\footnote{For example, two loops embedded in the torus are incident with one common face, while deleting one of the loops leaves the other loop incident with two faces; deleting this single loop gives an isolated vertex and just one face again.} However, we next show that the Euler genus is monotonous.

\begin{lemma}\label{lem:s_submap}
Let $M'$ be a submap of a map $M$. Then,
\begin{enumerate}[label=(\roman*)]
\item\label{en:prop_s_submap_1} $\eg(M')\leq \eg(M)$, and
    \item\label{en:prop_s_submap_2} $\eg(M')=\eg(M)$ if and only if $\sg(M')=\sg(M)$.
\end{enumerate}
\end{lemma}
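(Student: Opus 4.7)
My plan is to reduce both statements to the single-edge-deletion case. Any submap $M'$ is obtained from $M$ by a finite sequence of single-edge deletions and removals of now-isolated vertices. Removing an isolated vertex strips off a connected component with $\vv=1$, $\ee=0$, $\ff=1$, $\kk=1$ and therefore $\cchi=2$, $\eg=0$, $\oo=2$; by additivity of $\eg$ and because that component is orientable, such a removal preserves both $\eg$ and $\sg$. So it suffices to handle a single edge deletion and then iterate.

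For statement \ref{en:prop_s_submap_1}, Table~\ref{tab:d_effect_s} shows that $\eg(M\backslash e)-\eg(M)\in\{-2,-1,0\}$ in every one of the eleven edge-type cases, and summing the non-positive contributions along the deletion sequence yields $\eg(M')\leq\eg(M)$. For the forward direction of \ref{en:prop_s_submap_2}, assume $\eg(M')=\eg(M)$. By \ref{en:prop_s_submap_1}, equality forces $\eg$ to be preserved at every single deletion step. The observation displayed just before Lemma~\ref{lem:effect_del_sg} asserts that $\sg(M\backslash e)=\sg(M)$ if and only if $\eg(M\backslash e)=\eg(M)$, so $\sg$ is preserved at each step as well, giving $\sg(M')=\sg(M)$.

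For the reverse direction of \ref{en:prop_s_submap_2}, no iteration is needed because formula~(\ref{eq:gos}) shows that $\sg$ alone already determines $\eg$: if $\sg(M)>0$ then $\oo(M)=2$ and $\eg(M)=2\sg(M)$; if $\sg(M)<0$ then $\oo(M)=1$ and $\eg(M)=-\sg(M)$; and $\sg(M)=0$ forces $\eg(M)=0$, because any non-orientable connected component would contribute at least $1$ to $\eg$ and hence make $\sg$ strictly negative. Thus $\sg(M')=\sg(M)$ immediately yields $\eg(M')=\eg(M)$, without even needing the submap hypothesis.

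The main obstacle is spotting this shortcut for the reverse direction: a naive approach would have to track possible orientability flips along the deletion sequence via Lemmas~\ref{lem:effect_del_con_o} and~\ref{lem:effect_del_sg}(iii) (where going from non-orientable to orientable in a single step forces $\eg$ to drop strictly), but exploiting that $\sg$ packages orientability and Euler genus together bypasses that case analysis entirely.
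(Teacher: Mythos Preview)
Your proof is correct and follows essentially the same approach as the paper's (very terse) argument: reduce to single edge deletions via Table~\ref{tab:d_effect_s}, and invoke the equivalence $\sg(M\backslash e)=\sg(M)\iff\eg(M\backslash e)=\eg(M)$ derived from Lemma~\ref{l.link_equiv}. Your treatment of the reverse direction of \ref{en:prop_s_submap_2} is actually a bit cleaner than what the paper gestures at: you observe directly from~\eqref{eq:gos} that $\sg$ determines $\eg$ for \emph{any} map (using that a non-orientable map has $\eg\geq 1$), so no induction along the deletion sequence is needed there.
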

\begin{proof} 
Part \ref{en:prop_s_submap_1} follows from Table~\ref{tab:d_effect_s}: deleting edges does not increase the Euler genus, and neither does deleting isolated vertices. Part~\ref{en:prop_s_submap_2} follows from Table~\ref{tab:d_effect_s} and Lemma~\ref{l.link_equiv}. 
\end{proof}

\section{Map homomorphisms}\label{sec:map_homomomorphisms}

The image of a graph homomorphism  can be realized as a sequence of identifications of pairs of distinct vertices, followed by the suppression of  parallel edges (i.e., edges incident with the same pair of vertices, or same vertex in the case of parallel loops). 
  In order to define the image of a homomorphism from a map, so that restricted to the underlying graph gives a graph homomorphism, we need to define how to identify a pair of vertices on a map so as to produce another map unambiguously, and to define what it means for edges of a map to be parallel.
 When identifying vertices in maps we also need to take into account their incident faces along with the vertex and face rotations: this is where the permutation axiomatization of maps becomes essential in order to formulate a well-defined operation on maps analogous to vertex identification in graphs.

\subsection{Vertex gluing}\label{sec:vertex_gluing}
In this section we define the operation of vertex gluing in terms of cross permutations and draw on properties of edge deletion and contraction in order to establish that vertex gluing is the only general way to identify vertices in a map while preserving  genus and orientability (this property is needed, in particular, to enable restriction and composition of homomorphisms to be generally defined). 
First, we introduce an operation on maps represented by cross involutions, and go on to explain how vertex gluing can be formalized using this operation.
\begin{definition}\label{def:merging}
Let $M\equiv(C,\alpha_0,\alpha_1,\alpha_2)$ be a map and $a,b\in C$. 
The map obtained by {\em riffling} $a$ and $b$ is $M^{(\, a\: b\,)}\equiv(C,\alpha_0,\widetilde{\alpha}_1,\alpha_2)$ 
in which 
$\widetilde{\alpha}_1a=\alpha_1 b, \; \widetilde{\alpha}_1b=\alpha_1 a,$
and $\widetilde{\alpha}_1=\alpha_1$ on $C\setminus\{a,b,\alpha_1 a,\alpha_1 b\}$. 
\end{definition}

Figure \ref{fig:vertexgluing} illustrates examples of the riffling operation.
With $\widetilde{\alpha}_1a=\alpha_1 b$ and $\widetilde{\alpha}_1b=\alpha_1 a$, we have $\widetilde{\alpha}_1(\alpha_1 a)=\widetilde{\alpha}_1 ^2b=b$, and similarly $\widetilde{\alpha}_1(\alpha_1b)=a$. The involution $\widetilde{\alpha}_1$ on $C$ obtained by riffling $a$ and $b$ is the result of conjugating the involution $\alpha_1$ by the transposition $(\;\; a \;\;\; b\;\;)$, or by the transposition $(\;\; \alpha_1 a \;\;\; \alpha_1 b\;\;)$; we thus have  $M^{(\, a\: b\,)}=M^{(\, \alpha_1 a\:\: \alpha_1 b\,)}.$ 
The pair of transpositions $(\;\; a \;\;\; \alpha_1 a\;\;)\:(\;\;b \;\;\; \alpha_1 b\;\;)$ in the disjoint cycle decomposition of $\alpha_1$ is replaced by
the pair $(\;\; a \;\;\; \alpha_1 b\;\;)\:(\;\;b \;\;\; \alpha_1 a\;\;)$.  

 While any pair of distinct crosses $a,b$ of $M$ can be riffled to produce another map, only under certain conditions are genus and orientability preserved.  We next introduce some terminology to help describe what these conditions are.        
A cross $c$ is {\em incident} with a vertex $v$ (or a face $z$) if the pair of permutation cycles associated with $v$ (respectively $z$) contains $c$. 
A pair of crosses are {\em coincident} with vertex $v$ (face $z$) if they belong to the same orbit of $\tau$ (respectively $\phi$), i.e. they belong to the same permutation cycle in the pair of cycles associated with $v$ (respectively $z$).
If $a,b$ are coincident with a common face $z$, the crosses $\alpha_1 a$ and $\alpha_1 b$ appear in the conjugate inverse cycle to $(\;\; a \;\;\; \dots \;\;\; b \;\;\; \dots \;\;)$; thus $a,b$ are coincident with a common face $z$ if and only if $\alpha_1 a, \alpha_1 b$ are coincident with a common face~$z$. 


\begin{lemma}\label{lem:riff_signedgenus}
The operation of riffling crosses $a$ and $b$ 
preserves genus and orientability when one of the following conditions holds:
\begin{enumerate}[label=(\arabic*)]
    \item \label{en:riff_signedgenus_1} (i) crosses $a$ and $b$ in $M$ are incident with distinct vertices and coincident with a common face, or (ii) these crosses are incident with vertices and faces in different connected components; 
    \item \label{en:riff_signedgenus_2} (i) crosses $a$ and $b$ in $M$ are coincident with a common vertex and incident with distinct faces, or (ii) these crosses are coincident with a common vertex and face and $\kk(M^{(\, a\: b\,)})>\kk (M)$. 
\end{enumerate}
In other words, in these cases,
$\sg(M^{(\,a\:b\,)})=\sg(M).$
\end{lemma}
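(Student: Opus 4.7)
The plan is to realize $M^{(a\,b)}$ as the contraction $M'/e$ of an extra edge $e$ in an auxiliary map $M'$ satisfying $M'\backslash e=M$, and then to invoke Lemma~\ref{l.link_equiv}(iii) together with Lemma~\ref{lem:effect_del_con_o}, noting that in each case the edge $e$ has a type for which both deletion and contraction preserve the signed genus and orientability. Concretely, given $M\equiv(C,\alpha_0,\alpha_1,\alpha_2)$ and crosses $a,b\in C$, I would construct $M'$ by adjoining four new crosses $\{c_1,c_2,c_3,c_4\}$ forming a single new edge $e\equiv(\,c_1\;c_4\,)(\,c_2\;c_3\,)$ via $\alpha_0'c_1=c_2$, $\alpha_0'c_3=c_4$, $\alpha_2'c_1=c_3$, $\alpha_2'c_2=c_4$, and by extending $\alpha_1$ to an involution $\alpha_1'$ so that $\alpha_1'a=c_1$, $\alpha_1'(\alpha_1 a)=c_2$, $\alpha_1'b=c_3$, $\alpha_1'(\alpha_1 b)=c_4$ (leaving $\alpha_1$ unchanged elsewhere). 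A direct reading of Tables~\ref{tab:del_con_tau} and~\ref{tab:del_con_phi} then yields $M'\backslash e=M$ and $M'/e=M^{(a\,b)}$.

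The next step is to classify the edge type of $e$ in $M'$ in each of the four cases. In case (1)(i), $e$ joins distinct vertices of $M'$ (since $a, b$ lie on distinct vertices of $M$), so $e$ is a link; and a cycle computation on $\phi'=\alpha_1'\alpha_0'$ shows that inserting $e$ splits the common face of $a, b$ into two, so $e$ is also a dual link. In case (1)(ii), $e$ bridges distinct components of $M$, giving $\kk(M'\backslash e)=\kk(M')+1$, so $e$ is a bridge. Case (2)(i) reduces to case (1)(i) applied to $M^*$: riffling commutes with dualization (only $\alpha_1$ is altered, and this permutation is invariant under swapping $\alpha_0$ and $\alpha_2$), and $\sg$ is a duality invariant. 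In case (2)(ii), $e$ is a loop on the common vertex of $a, b$, and the hypothesis $\kk(M^{(a\,b)})>\kk(M)$ combined with $\kk(M')=\kk(M)$ forces $\kk(M'/e)>\kk(M')$, which is exactly the defining property of $e$ being a dual bridge. In all four cases Lemma~\ref{l.link_equiv}(iii) then gives
\[\sg(M)=\sg(M'\backslash e)=\sg(M')=\sg(M'/e)=\sg(M^{(a\,b)}),\]
and Lemma~\ref{lem:effect_del_con_o}(ii)--(iii) analogously gives $\oo(M)=\oo(M^{(a\,b)})$, so that both the genus and the orientability are preserved.

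The main obstacle I anticipate is the bookkeeping for the construction of $M'$: one must verify that $\alpha_1'$ remains a fixed-point-free involution (excluding degenerate coincidences among $a, b, \alpha_1 a, \alpha_1 b$) and that $M'\backslash e$ and $M'/e$ recover $M$ and $M^{(a\,b)}$ in every configuration. The most delicate case is (2)(ii), where one must translate the purely combinatorial hypothesis on $\kk$ into the fact that $e$ is a non-twisted loop rather than a twisted one in $M'$; this non-twistedness is what guarantees that $e$ genuinely qualifies as a dual bridge, since dual bridges form a special subclass of non-twisted loops.
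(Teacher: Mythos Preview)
Your overall strategy is correct and, for condition~(1), is exactly what the paper does: build an auxiliary map $M'$ with an extra edge $e$ satisfying $M'\backslash e=M$ and $M'/e=M^{(a\,b)}$, identify the edge type of $e$, and apply Lemma~\ref{l.link_equiv}.

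However, your explicit $\alpha_1'$ assignments are wrong. With your labeling ($c_1=c$, $c_2=\alpha_0'c$, $c_3=\alpha_2'c$, $c_4=\alpha_0'\alpha_2'c$) and the deletion rule (merge the $\alpha_1'$-partners of $c$ and $\alpha_2'c$, and of $\alpha_0'c$ and $\alpha_0'\alpha_2'c$), your choices $\alpha_1'(\alpha_1 a)=c_2$, $\alpha_1'b=c_3$, $\alpha_1'(\alpha_1 b)=c_4$ yield $M'\backslash e$ with transpositions $(a\;b)(\alpha_1 a\;\alpha_1 b)$ --- neither $M$ nor $M^{(a\,b)}$ --- and $M'/e=M$ rather than $M^{(a\,b)}$. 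The correct pairing (as in the paper) is $\alpha_1'(\alpha_1 a)=c_3$, $\alpha_1'b=c_4$, $\alpha_1'(\alpha_1 b)=c_2$; then $M'\backslash e=M$ and $M'/e=M^{(a\,b)}$ as claimed. This is precisely the ``bookkeeping'' hazard you flagged in your last paragraph.

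For condition~(2), your route differs from the paper's and is worth noting. The paper handles (2) uniformly by observing that riffling is an involution: if $(a,b)$ satisfies (2)(i) in $M$ then it satisfies (1)(i) in $M^{(a\,b)}$, and similarly (2)(ii) in $M$ becomes (1)(ii) in $M^{(a\,b)}$; since $(M^{(a\,b)})^{(a\,b)}=M$, condition~(1) applied to $M^{(a\,b)}$ gives the result. Your duality argument for (2)(i) is a clean alternative, since riffling commutes with dualization and $\sg,\oo$ are duality invariants. Your direct argument for (2)(ii) also works once the construction is fixed: when $a,b$ are \emph{coincident} on a vertex, the inserted edge $e$ is a non-twisted loop (one checks from the corrected $\alpha_1'$ that $c$ and $\alpha_0'\alpha_2'c$ land in the same $\tau'$-cycle), and then $\kk(M'/e)=\kk(M^{(a\,b)})>\kk(M)=\kk(M')$ makes $e$ a dual bridge. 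The paper's approach is arguably tidier in that it avoids re-running the edge-type analysis for loops, but yours is equally valid.
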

\begin{proof} First assume that crosses $a,b$  satisfy condition \ref{en:riff_signedgenus_1}. 
We define $M^+\equiv(C^+,\alpha_0^+,\alpha_1^+,\alpha_2^+)$ as the map obtained from $M$ by adding an edge $e\equiv (\;\; c\;\;\; \alpha_0^+\alpha_2^+c\;\;)\;(\;\;\alpha_0^+c\;\;\; \alpha_2^+c\;\;)$ 
so that $M^+\backslash e=M$ and $M^+/e=M^{(\, a\: b\,)}$. Explicitly, one can check that $M^+/e=M^{(\, a\: b\,)}$ setting 
\begin{itemize}
     \item $C^+=C\sqcup\{c,\alpha_0^+ c,\alpha_2^+ c,\alpha_0^+\alpha_2^+ c\};$
    \item $\alpha_0^+=\alpha_0$ on $C$;
    \item  $\alpha_1^+=\alpha_1$ on $C\setminus\{a,b,\alpha_1 a,\alpha_1 b\}$, and  $\alpha_1^+a=c$,  $\alpha_1^+b=\alpha_0^+\alpha_2^+ c$, $\alpha_1^+(\alpha_1 a)=\alpha_2^+c$,  $\alpha_1^+(\alpha_1 b)=\alpha_0^+ c$;
   
    \item $\alpha_2^+=\alpha_2$ on $C$.
\end{itemize}
When $a$ and $b$ belong to the same permutation cycle of $\phi$, the edge $e$ is a link and dual link in $M^+$; when they belong to different connected components of $M$, the edge $e$ is a bridge of $M^+$. By Lemma~\ref{l.link_equiv}, $\sg(M)=\sg(M^+)$ and $\sg(M^+)=\sg(M^+/e)$, whence $\sg(M)=\sg(M^{(\, a\: b\,)})$.

To see that riffling crosses satisfying condition \ref{en:riff_signedgenus_2} also preserves genus and orientability, it suffices to show that if in $M$ crosses $a$ and $b$ are coincident with a common vertex and incident with distinct faces (condition \ref{en:riff_signedgenus_2}(i)), then in $M^{(\,a\;b\,)}$ crosses $a$ and $b$ are incident with distinct vertices and coincident with a common face (condition \ref{en:riff_signedgenus_1}(i));
likewise, if in $M$ crosses $a$ and $b$ are coincident with a common vertex and common face and $\kk(M^{(\, a\: b\,)})>\kk (M)$ (condition \ref{en:riff_signedgenus_2}(ii)), then in $M^{(\,a\;b\,)}$ crosses $a$ and $b$ are incident with vertices and faces in different connected components (condition \ref{en:riff_signedgenus_1}(ii)).
For then we have, $\sg(M^{(\, a\: b\,)})=\sg\large((M^{(\, a\: b\,)})^{(\, a\: b\,)}\large)$, and the result follows as $(M^{(\,a\;b\,)})^{(\,a\;b\,)}=M$. 

It remains then to establish the effect of riffling crosses $a, b$ on the vertex and face permutations of $M$ under condition~\ref{en:riff_signedgenus_2}; as riffling $a,b$ again returns $M^{(\, a\: b\,)}$ to the original map $M$, it suffices to  establish the effect of riffling on $\tau$ and $\phi$ under condition~\ref{en:riff_signedgenus_1}. (We choose this direction as it will be useful for clarifying what is involved in the key operation of vertex gluing, defined in Definition~\ref{def:vertex_gluing} below and illustrated in Figure~\ref{fig:vertexgluing} with the same notation as here.) 

Suppose first condition~\ref{en:riff_signedgenus_1}(i)
that the crosses $a$ and $b$ are incident with distinct vertices $u$ and $v$, and coincident with a common face $z$ in $M$. 
The pair of face permutation cycles associated with $z$ takes the form 
$$\phi_z=(\;\; a \;\;\; \mathbf x \;\;\; b \;\;\; \mathbf y\;\;)\;(\;\; \alpha_0 \overline{\mathbf x} \;\;\; \alpha_0 a \;\;\; \alpha_0 \overline{\mathbf y} \;\;\; \alpha_0 b \;\;),$$
and the pairs of vertex permutation cycles associated with $u$ and $v$ take the form 
$$\tau_u=(\;\;a \;\;\; \mathbf u\;\;)\;(\;\; \alpha_2 \overline{\mathbf u}\;\;\; \alpha_2 a \;\;)\; \quad\mbox{and}\quad \;\tau_v=(\;\; b \;\;\; \mathbf v\;\;)\;(\;\;\alpha_2 \overline{\mathbf v} \;\;\; \alpha_2 b \;\;), 
$$
for  (possibly empty) cross sequences $\mathbf x,\mathbf y,\mathbf u$ and $\mathbf v$. 

After riffling $a$ and $b$, the face $z$ of $M$ is split into two faces $x,y$ and vertices $u$ and $v$ are merged into a single vertex $w$; in terms of the vertex permutation $\widetilde{\tau}=\widetilde{\alpha}_1\alpha_2$ and face permutation $\widetilde{\phi}=\widetilde{\alpha}_1\alpha_0$ of the map $M^{(\, a\: b\,)}\equiv(C,\alpha_0,\widetilde{\alpha}_1,\alpha_2)$  obtained upon merging $u$ and $v$ through $z$, 
$$\widetilde{\phi}_x=(\;\; a \;\;\; \mathbf x\;\;)\;(\;\; \alpha_0 \overline{\mathbf x} \;\;\; \alpha_0 a \;\;
)\; \quad\mbox{and}\quad \;\widetilde{\phi}_y=(\;\; b \;\;\; \mathbf y\;\;)\;(\;\; \alpha_0 \overline{\mathbf y} \;\;\; \alpha_0 b \;\;), $$
and
$$\widetilde{\tau}_w=(\;\; a \;\;\; \mathbf u \;\;\; b \;\;\; \mathbf v\;\;)\;(\;\;\alpha_2 \overline{\mathbf u} \;\;\; \alpha_2 a \;\;\; \alpha_2 \overline{\mathbf v} \;\;\; \alpha_2 b \;\;).$$
On other crosses we have $\widetilde{\tau}=\tau$ and $\widetilde{\phi}=\phi$. Thus,
riffling $a,b$ consists in merging  the vertices $u$ and $v$ into a single vertex $w$ while splitting the common face $z$ into faces $x$ and $y$. 
The crosses $a,b$ now satisfy condition~(i) of part~\ref{en:riff_signedgenus_2}. Riffling $a,b$ again returns us to condition~(i) of part~\ref{en:riff_signedgenus_1}. 

Suppose now condition~\ref{en:riff_signedgenus_1}(ii) that crosses $a,b$ are incident with vertices $u, v$ and faces $x, y$ that are in different connected components of $M$.
Here we have 
$$\tau_u=(\;\; a \;\;\; \mathbf u\;\;)\;(\;\;\alpha_2 \overline{\mathbf u} \;\;\; \alpha_2 a \;\;)\;\quad\mbox{and}\quad\;\tau_v=(\;\;  b \;\;\; \mathbf v\;\;)\;(\;\; \alpha_2 \overline{\mathbf v} \;\;\;\;\; \alpha_2 b \;\;), 
$$
and $$\phi_x=(\;\; a \;\;\; \mathbf x\;\;)\;(\;\; \alpha_0 \overline{\mathbf x} \;\;\; \alpha_0 a \;\;)\;\quad\mbox{and}\quad\;\phi_y=(\;\;b \;\;\; \mathbf y\;\;)\;(\;\; \alpha_0 \overline{\mathbf y} \;\;\; \alpha_0 b \;\;), $$
for (possibly empty) cross sequences $\mathbf x,\mathbf y,\mathbf u$ and $\mathbf v$.
Then, the result of merging $u$ and $v$ into a single vertex $w$, along with $x$ and $y$ into a single face $z$, is to produce the map $M^{(\, a\: b\,)}\equiv(C,\alpha_0,\widetilde{\alpha}_1,\alpha_2)$ in which   
$$\widetilde{\tau}_w=(\;\;\mathbf u \;\;\; a \;\;\; \mathbf v \;\;\; b\;\;)\;(\;\; \alpha_2 a \;\;\; \alpha_2 \overline{\mathbf u} \;\;\; \alpha_2 b \;\;\; \alpha_2 \overline{\mathbf v} \;\;).$$
$$\widetilde{\phi}_z=(\;\; a \;\;\; \mathbf x \;\;\; b \;\;\; \mathbf y\;\;)\;(\;\;\alpha_0 \overline{\mathbf x} \;\;\; \alpha_0 a \;\;\; \alpha_0 \overline{\mathbf y} \;\;\; \alpha_0 b \;\;).$$
The crosses $a,b$ now satisfy condition (ii) of part~\ref{en:riff_signedgenus_2}. Riffling $a,b$ again returns us to condition~(ii) of part~\ref{en:riff_signedgenus_1}. 
\end{proof}

  Condition~\ref{en:riff_signedgenus_1} in Lemma~\ref{lem:riff_signedgenus} under which genus and orientability are preserved by riffling crosses features in our definition of vertex gluing; see Figure~\ref{fig:vertexgluing}. (The inverse operation to vertex gluing of riffling crosses under condition~\ref{en:riff_signedgenus_2}, which splits a vertex while preserving genus and orientability, will feature later in Section~\ref{sec:cutting}.) 
\begin{definition}\label{def:vertex_gluing}
Let $a,b$ be crosses of a map $M$ such that either
\begin{enumerate}[label=(\roman*)]
    \item \label{en:vertex_gluing_1}\; $a$ and $b$ are coincident with a face $z$ while $a$ is incident with a vertex $u$ and $b$ is incident with a different vertex $v$;
    or
\item\label{en:vertex_gluing_2}\; $a,b$ are incident with vertices $u, v$ and faces $x, y$ belonging to different connected components. 
\end{enumerate} 
The map obtained from $M$ by {\em gluing} vertices $u$ and $v$, by (i) splitting face $z$ or by (ii) merging faces $x$ and $y$, is the map $M^{(\, a\; b\,)}$ obtained from $M$ by riffling $a$ and $b$. 

In case either $u$ or $v$ is an isolated vertex, {\em gluing} vertices $u$ and $v$ is simply the deletion of the isolated vertex (if both are isolated, then either is chosen arbitrarily to be deleted).
\end{definition}

\begin{figure}[htb]
\centering
\includegraphics[width=0.9\textwidth]{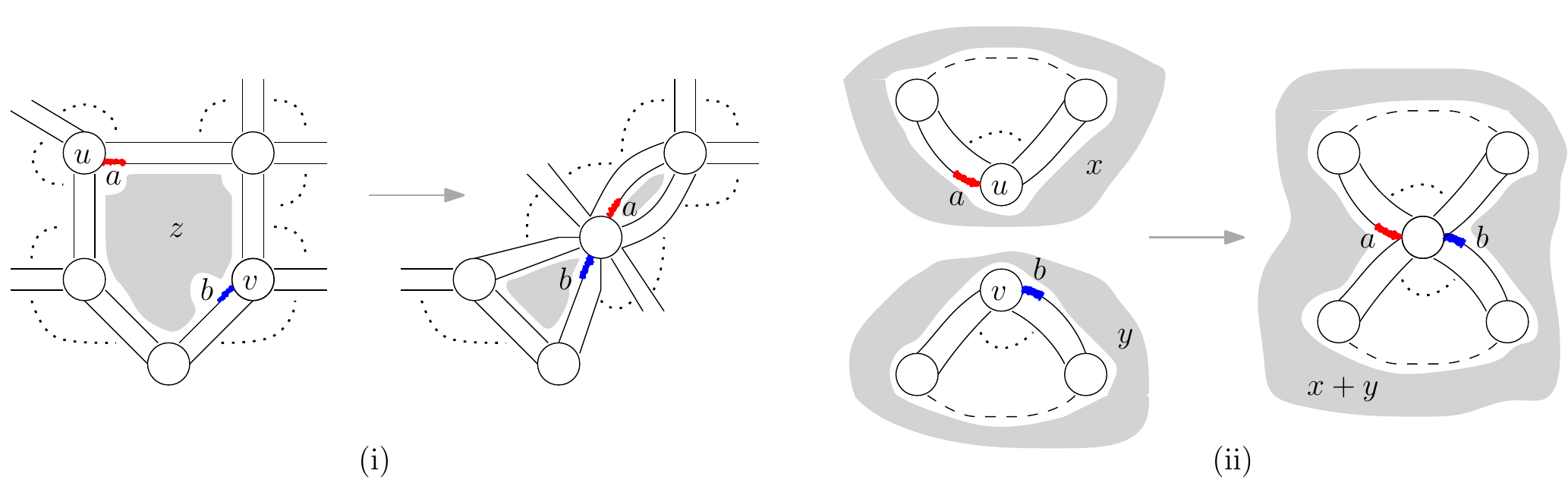}
\caption{\small Riffling the crosses $a$ and $b$ to obtain the map $M^{(\, a\: b\,)}$; this is the map that results from gluing distinct vertices $u$ and $v$, whose incident crosses $a$ and $b$ are: (i) coincident with a face $z$, (ii) incident with different faces $x,y$ in different connected components. }
\label{fig:vertexgluing}
\end{figure}

  The effect of vertex gluing on the vertex and face permutations of $M$ is given explicitly at the end of the proof of Lemma~\ref{lem:riff_signedgenus} above. 

 Having now arrived at a formal definition of vertex gluing -- an operation of vertex identification in maps that preserves genus and orientability -- we finish this section by establishing some of its properties relevant to the sequel. From
 Lemma~\ref{lem:riffle_delete_contract} to Lemma~\ref{lem:riffle_delete_dual_link_bridge}, we establish that vertex gluing commutes with deletion of a dual link or bridge. 
 Lemma~\ref{lem:riffle_reorder_condition} tells us that permuting the order of a sequence of vertex gluings gives another sequence of vertex gluings, which by Lemma~\ref{lem:riffle_reorder} results in the same map. 

\begin{lemma}\label{lem:riffle_delete_contract}
If  $e\equiv (\;\; c \;\;\;\alpha_0\alpha_2 c\;\;)\; (\;\; \alpha_ 0 c \;\;\; \alpha_2 c\;\;)$ is an edge of $M\equiv(C,\alpha_0, \alpha_1, \alpha_2)$ 
and $a,b$ are crosses of $M$ such that $\{a,b,\alpha_1 a,\alpha_1 b\}\cap \{c, \alpha_0 c,\alpha_2 c,\alpha_0\alpha_2 c\}=\emptyset$, then 
\[(M\backslash e)^{(\,a\: b\,)}=M^{(\, a\: b\,)}\backslash e,\quad \mbox{and} \quad (M/ e)^{(\,a\: b\,)}=M^{(\, a\: b\,)}/e.\]
\end{lemma}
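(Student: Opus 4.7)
The plan is to verify both equalities directly, at the level of the tuple $(C',\alpha_0',\alpha_1',\alpha_2')$ that specifies each map. The first observation is that deletion and contraction of $e$ both produce a map on the cross set $C'=C\setminus\{c,\alpha_0 c,\alpha_2 c,\alpha_0\alpha_2 c\}$ with $\alpha_0,\alpha_2$ simply restricted to $C'$, while riffling $a,b$ preserves $C$ together with $\alpha_0$ and $\alpha_2$. So both sides of each equation have the same underlying cross set and the same $\alpha_0,\alpha_2$, and the entire content of the lemma reduces to checking that they induce the same involution $\alpha_1$.

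Next, I would pin down precisely the positions at which each operation changes $\alpha_1$. From Definition~\ref{def:merging}, riffling $a$ and $b$ alters $\alpha_1$ exactly at the four positions $a,b,\alpha_1 a,\alpha_1 b$ and is the identity modification elsewhere. For deletion, the new involution $\alpha_1'$ on $C'$ agrees with $\alpha_1$ except that the four $\alpha_1$-pairs that previously involved a cross of $e$ — namely the pairs containing $\alpha_1 c,\alpha_1\alpha_0 c,\alpha_1\alpha_2 c,\alpha_1\alpha_0\alpha_2 c$ — are replaced by two new pairs among these four crosses (this can be read off directly from Table~\ref{tab:del_con_tau} by translating the cycle change for $\tau=\alpha_1\alpha_2$ into a change for $\alpha_1$). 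For contraction, the same set $S:=\{\alpha_1 c,\alpha_1\alpha_0 c,\alpha_1\alpha_2 c,\alpha_1\alpha_0\alpha_2 c\}$ gets re-paired under $\alpha_1'$, with the exact re-pairing depending on whether $e$ is a link, a non-twisted loop, or a twisted loop, while $\alpha_1'=\alpha_1$ everywhere outside $S$.

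The key point is then that the two sets $\{a,b,\alpha_1 a,\alpha_1 b\}$ and $S$ where the modifications act are disjoint. One half of this is the hypothesis $\{a,b,\alpha_1 a,\alpha_1 b\}\cap\{c,\alpha_0 c,\alpha_2 c,\alpha_0\alpha_2 c\}=\emptyset$; applying the bijection $\alpha_1$ (an involution) to both sides yields the complementary disjointness $\{a,b,\alpha_1 a,\alpha_1 b\}\cap S=\emptyset$ needed. This also implies that $a,b,\alpha_1 a,\alpha_1 b$ all lie in $C'$, so the riffling is well-defined on $M\setminus e$ and $M/e$, and that $\alpha_1 a,\alpha_1 b$ (the images of $a,b$ under $\alpha_1$) coincide with their images under $\alpha_1'$ in both $M\setminus e$ and $M/e$.

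It follows that performing riffling first then deletion (or contraction), and performing deletion (or contraction) first then riffling, each modify $\alpha_1$ at the same eight positions by the same rule, and leave $\alpha_1$ unchanged elsewhere. Hence both orders produce the same involution on $C'$, proving $(M\setminus e)^{(a\,b)}=M^{(a\,b)}\setminus e$ and $(M/e)^{(a\,b)}=M^{(a\,b)}/e$. The only mildly delicate step is the bookkeeping for contraction, where the three subcases (link, non-twisted loop, twisted loop) must each be checked to confirm that the re-pairing of $\alpha_1$ happens only within $S$; but once this is verified from Table~\ref{tab:del_con_tau}, the disjointness argument applies uniformly.
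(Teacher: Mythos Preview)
Your proposal is correct and follows essentially the same approach as the paper's proof: reduce to checking $\alpha_1$, identify the (disjoint) sets of crosses on which riffling and deletion/contraction respectively modify $\alpha_1$, use the hypothesis together with applying the involution $\alpha_1$ to obtain the complementary disjointness from $S=\{\alpha_1 c,\alpha_1\alpha_0 c,\alpha_1\alpha_2 c,\alpha_1\alpha_0\alpha_2 c\}$, and conclude commutativity. The paper's write-up spells out the case analysis for how $\alpha_1$ is rebuilt under deletion in slightly more detail, but the structure of the argument is the same.
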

\begin{proof} We prove the identity for deletion of $e$; the proof for its contraction is similar. 
The map $M^{(\, a\: b\,)}\equiv (C, \alpha_0, \widetilde{\alpha}_1,\alpha_2)$ has involution $\widetilde{\alpha}_1$ equal to $\alpha_1$ conjugated by $(\;\; a \;\;\; b\;\;)$.
The map $M\backslash e\equiv (C',\alpha_0',\alpha_1',\alpha_2')$ has $C'=C\setminus\{c,\alpha_0 c,\alpha_2 c,\alpha_0\alpha_2 c\}$, and $\alpha_0'=\alpha_0$ and $\alpha_2'=\alpha_2$ on $C'$. 
To obtain $\alpha_1'$ from $\alpha_1$ in terms of its disjoint cycle decomposition (product of transpositions) consider the (at most four) transpositions of $\alpha_1$ containing a cross from edge~$e$. We first describe how the transpositions of $\alpha_1'$ are obtained from those of $\alpha_1$; after this we describe the conditions under which pairs of empty cycles are added to $\alpha_1'$ (representing new isolated vertices). 

If the crosses of $e$ are in four distinct transpositions of $\alpha_1$, after removing them, merge together into a single transposition the crosses that were paired with $c$ and $\alpha_2 c$, and likewise those that were paired with $\alpha_0 c$ and $\alpha_0\alpha_2 c$.
If there are three transpositions containing crosses from $e$, then there is one transposition containing two crosses of $e$, and two containing just one cross from $e$; after removing the crosses of $e$, merge together the remaining non-empty cycles to make a single transposition in place of the orginal two. 
 Finally, if there are just two transpositions containing crosses from $e$, then the transpositions of $\alpha_1'$ are simply obtained by removing these transpositions. 
 
The conditions under which pairs of empty cycles are added to $\alpha_1'$ are as follows.  If $\alpha_1 c=\alpha_2 c$, then a pair of empty cycles is added to represent the isolated vertex that results; likewise, if  $\alpha_1\alpha_0c=\alpha_0\alpha_2 c$  then a pair of empty cycles is added to represent the isolated vertex that results. (This corresponds to deleting a link with endpoint(s) of degree one.) 
 If $\alpha_1 c=\alpha_0 c$ and $\alpha_1 \alpha_2 c=\alpha_2\alpha_0 c$, then a single pair of empty cycles is added. (This corresponds to deleting a non-twisted loop, and a single new isolated vertex is obtained.) If $\alpha_1 c=\alpha_2 \alpha_0 c$ and $\alpha_1 \alpha_2 c=\alpha_0 c$, then a single pair of empty cycles is added. (This corresponds to deleting a twisted loop, and a single new isolated vertex is obtained.)

We thus see that the involution $\alpha_1'$ of $M\backslash e$ differs from $\alpha_1$ only on $\{c, \alpha_0 c,\alpha_2 c,\alpha_0\alpha_2 c\}\cup \{\alpha_1 c, \alpha_1\alpha_0 c,\alpha_1\alpha_2 c,\alpha_1\alpha_0\alpha_2 c\}$.

Having now seen how deletion of an edge is defined in terms of the involution $\alpha_1$,\footnote{For contraction, the effect on $\alpha_1$ is the same as described for deletion with the roles of $\alpha_0$ and $\alpha_2$ switched. (Contraction produces isolated vertices for non-twisted loops, where $\alpha_1 c=\alpha_0 c$ or  $\alpha_1\alpha_2c=\alpha_0\alpha_2 c$.)} we observe that the hypothesis  that  $\{a,b,\alpha_1 a,\alpha_1 b\}\cap \{c, \alpha_0 c,\alpha_2 c,\alpha_0\alpha_2 c\}=\emptyset$ implies further
that $\{a,b,\alpha_1 a,\alpha_1 b\}\cap \{\alpha_1 c, \alpha_1\alpha_0 c,\alpha_1\alpha_2 c,\alpha_1\alpha_0\alpha_2 c\}=\emptyset$  (by applying the involution $\alpha_1$ to the two sets in the intersection).
As deleting $e$ and riffling $a,b$ thus change $\alpha_1$ on disjoint sets of crosses, and they both preserve $\alpha_0$ and $\alpha_2$ on $C'$, these operations commute.
\end{proof}
\begin{corollary}\label{cor:riffle_deletion_dual_link_bridge}
If $e\equiv (\;\; c \;\;\;\alpha_0\alpha_2 c\;\;)\: (\;\; \alpha_ 0 c \;\;\; \alpha_2 c\;\;)$ is a dual link or  bridge  of $M\equiv(C,\allowbreak\alpha_0,\allowbreak \alpha_1,\allowbreak \alpha_2)$ and $a,b$ are crosses of $M$ such that $\{a,b,\alpha_1 a,\alpha_1 b\}\cap \{c, \alpha_0 c,\alpha_2 c,\alpha_0\alpha_2 c\}=\emptyset$, then the vertex gluing of $M$ represented by riffling $a,b$ is also a vertex gluing of $M\backslash e$, again represented by riffling $a,b$. 
\end{corollary}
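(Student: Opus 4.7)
The plan is to reduce the corollary to checking that the combinatorial conditions of Definition~\ref{def:vertex_gluing} still obtain after deleting $e$. The equality of maps $(M\backslash e)^{(\,a\: b\,)}=M^{(\,a\: b\,)}\backslash e$ is given directly by Lemma~\ref{lem:riffle_delete_contract}, so the real content of the corollary is verifying that the riffling of $a,b$ in $M\backslash e$ is itself a valid vertex gluing, i.e.\ that $a,b$ satisfy condition~(i) or~(ii) of Definition~\ref{def:vertex_gluing} relative to $M\backslash e$. The vertex side is immediate: by the disjointness hypothesis, the crosses $a,b,\alpha_1 a,\alpha_1 b$ survive in $M\backslash e$ and sit in the same vertex cycles of $\tau'$ as they did in $\tau$, with only the crosses of $e$ stripped out. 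Since deletion of a single edge does not merge vertices, $a$ remains incident with $u$ and $b$ with $v$, with $u\neq v$ still holding if it did in $M$.

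If $e$ is a dual link (hence not a bridge, since a bridge is a dual non-twisted loop), deletion merges two cycles of $\phi$ into one and leaves $\kk$ unchanged. A cycle of $\phi$ containing both $a,b$ is therefore either unaffected or merged with another cycle to give a cycle of $\phi'$ still containing both, so condition~(i) is preserved; if condition~(ii) held in $M$, the components are unchanged in $M\backslash e$ and the condition persists. No crossover between the two conditions occurs in this case.

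The substantive case is $e$ a bridge, where $\kk$ increases by one and the single $\phi$-cycle containing both sides of $e$, written $(\,c\;\mathbf x\;\alpha_0\alpha_2 c\;\mathbf y\,)$, splits into two cycles $(\mathbf x)$ and $(\mathbf y)$ of $\phi'$ by Table~\ref{tab:del_con_phi}. If $a,b$ lie in a face cycle other than this one, condition~(i) is trivially preserved, and condition~(ii) clearly persists modulo the bridge's component split. The delicate subcase is condition~(i) with $a,b$ in this particular cycle: if both lie in $\mathbf x$ (or both in $\mathbf y$) they remain coincident with one of the two new cycles in $M\backslash e$, keeping condition~(i); if $a\in\mathbf x$ and $b\in\mathbf y$, one computes that $\phi c=\alpha_1\alpha_0 c$ lies at vertex $v$ of $e$ while $\phi(\alpha_0\alpha_2 c)=\alpha_1\alpha_2 c$ lies at $u$, and since any cycle of $\phi'$ bounds a face of $M\backslash e$ and hence lies in a single connected component, all of $\mathbf x$ sits in $v$'s component of $M\backslash e$ and all of $\mathbf y$ in $u$'s. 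Thus $a,b$ lie in distinct components and condition~(ii) holds. This last crossover---condition~(i) in $M$ turning into condition~(ii) in $M\backslash e$---is the main obstacle, and is exactly where the defining property of a bridge (separating its shared face into two pieces living in distinct components) is used.
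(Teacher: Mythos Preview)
Your proof is correct and follows essentially the same approach as the paper's: invoke Lemma~\ref{lem:riffle_delete_contract} for the map equality, then verify case by case that the vertex-gluing conditions of Definition~\ref{def:vertex_gluing} persist after deleting a dual link (face merge, components fixed) or a bridge (face split into two components, with the key crossover from condition~(i) to condition~(ii) when $a,b$ land in opposite halves). Your explicit computation of $\phi c$ and $\phi(\alpha_0\alpha_2 c)$ to pin down which component each half of the split face belongs to is a nice touch that the paper leaves implicit.
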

\begin{proof}
As $(M\backslash e)^{(\,a\: b\,)}=M^{(\, a\: b\,)}\backslash e$ by Lemma~\ref{lem:riffle_delete_contract}, we only need to check that crosses $a,b$ incident with distinct vertices and either coincident with a common face in $M$ or in different connected components of $M$ are also coincident with a common face in $M\backslash e$ or lie in different connected components of $M\backslash e$.

The effect of deleting a dual link $e$ incident with faces $x'$ and $y'$ in $M$ is to merge $x'$ and $y'$ into a single face $z'$. If $a,b$ are coincident with a common face $z$ of $M$, then the same is true in $M\backslash e$ (the same face $z$ if $z\not\in\{x',y'\}$, the merging of $x'$ and $y'$ if $z\in\{x',y'\}$). If $a,b$ belong to different connected components of $M$ and are incident with faces $x,y$ in $M$, then they belong to different connected components in $M\backslash e$ and are incident with the same faces $x,y$ in $M\backslash e$ as they are in $M$. 

Deleting a bridge $e$ incident with face $z'$ in $M$ splits $z'$ into two faces $x', y'$ belonging to different connected components. If $a,b$ are coincident with a common face $z$ of $M$, then  the same is true in $M\backslash e$ when $z\neq z'$;  if $z=z'$, crosses $a,b$ are either coincident with common face $x'$ or with common face $y'$ in $M\backslash e$, or in different connected components incident with faces $x'$ and $y'$. If $a,b$ belong to different connected components of $M$ and are incident with faces $x,y$ in $M$, then they belong to different connected components in $M\backslash e$ and are incident with the same faces $x,y$ in $M\backslash e$ as they are in $M$ unless $\{x,y\}\cap\{z\}\neq \emptyset$, in which case the face in the  component of $M$ containing $e$, say $x=z$ containing $a$, is split into two faces, one of which contains~$a$ (however, the statement regarding the fact that $a,b$ belong to different connected components of $M\setminus e$ remains valid).
\end{proof}



 Corollary~\ref{cor:riffle_deletion_dual_link_bridge} can be extended to Lemma~\ref{lem:riffle_delete_dual_link_bridge} below  to accommodate deletion of any dual link or bridge, which will allow us in Corollary~\ref{cor:riffle_spanning_submap} to induce, from a vertex gluing of $M$, a corresponding vertex gluing of a submap of the same genus and orientability.
\begin{lemma}\label{lem:riffle_delete_dual_link_bridge}
Let $e$ be either a dual link or a bridge with no endpoint of degree one of a map $M\equiv(C,\alpha_0, \alpha_1, \alpha_2)$. Let $a,b$ be crosses of $M$ 
incident with distinct vertices and either coincident with a common face or in different connected components of $M$. Then 
there are uniquely defined crosses $a', b'\in C$ 
such that $(M^{(\, a\: b\,)})\backslash e=(M\backslash e)^{(\, a'\: b'\,)}$. In particular, to the vertex gluing of $M$ represented by riffling $a,b$ corresponds a vertex gluing of $M\backslash e$ represented by riffling $a',b'$.
\end{lemma}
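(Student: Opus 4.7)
The plan is first to extract the structural observation that, because $e$ is a dual link or a bridge with no endpoint of degree one, no cross of $e$ is $\alpha_1$-paired with another cross of $e$: indeed $\alpha_1 c=\alpha_2 c$ would force a degree-one endpoint at $e$, while $\alpha_1 c\in\{\alpha_0 c,\alpha_0\alpha_2 c\}$ would make $e$ a non-twisted or twisted loop (impossible, since a bridge is a link and a dual link is not a loop). Hence the four crosses of $e$, together with their four $\alpha_1$-partners, comprise eight distinct crosses, and $\alpha_1$ restricts to a bijection between $T=\{c,\alpha_0 c,\alpha_2 c,\alpha_0\alpha_2 c\}$ and the disjoint set $\alpha_1 T$.

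Next I would split into cases according to how $S=\{a,b,\alpha_1 a,\alpha_1 b\}$ meets $T$. When $S\cap T=\emptyset$, Corollary~\ref{cor:riffle_deletion_dual_link_bridge} immediately gives the conclusion with $(a',b')=(a,b)$. For the remaining configurations I would exploit the identity $M^{(\, a\: b\,)}=M^{(\, \alpha_1 a\:\: \alpha_1 b\,)}$ to swap representatives whenever doing so reduces $|S\cap T|$; this disposes of every configuration except the mixed one in which exactly one element of each pair $\{a,\alpha_1 a\}$ and $\{b,\alpha_1 b\}$ lies in $T$. In that mixed case, say $a\in T$ and $\alpha_1 b\in T$ while $b,\alpha_1 a\notin T$ (the symmetric possibilities being analogous), I would read off $(a',b')$ from the explicit description of the involution $\alpha_1'$ of $M\backslash e$ given in the proof of Lemma~\ref{lem:riffle_delete_contract}: deletion splices the $\alpha_1$-partner of $c$ with that of $\alpha_2 c$ into a single transposition of $\alpha_1'$, and likewise the partners of $\alpha_0 c$ and $\alpha_0\alpha_2 c$. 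The pair $(a',b')$ is then the unique pair of crosses of $M\backslash e$ whose two transpositions of $\alpha_1'$ must be swapped, in the sense of Definition~\ref{def:merging}, to produce the $\alpha_1$ of the target $M^{(\, a\: b\,)}\backslash e$.

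Finally, the verification of $(M^{(\, a\: b\,)})\backslash e=(M\backslash e)^{(\, a'\: b'\,)}$ reduces to comparing the two $\alpha_1$ involutions on a short explicit list of crosses, since $\alpha_0$ and $\alpha_2$ are untouched by either operation and no isolated vertices are created (as $e$ has no degree-one endpoints). Uniqueness of $(a',b')$, up to the equivalence $M^{(\, a'\: b'\,)}=M^{(\, \alpha_1' a'\:\: \alpha_1' b'\,)}$, follows because a riffle of $M\backslash e$ is determined by the pair of $\alpha_1'$-transpositions it modifies, and this pair is forced by the difference between $\alpha_1'$ and the $\alpha_1$ of the target. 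The main obstacle I anticipate is the bookkeeping in the mixed case --- verifying that the crosses $a',b'$ one reads off actually lie in the cross set of $M\backslash e$ and that they realise the same vertex identification as $(a,b)$ does in $M$ --- but once the spliced transpositions are written out this follows by direct inspection.
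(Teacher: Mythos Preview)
Your swap argument does not work. The set $S=\{a,b,\alpha_1 a,\alpha_1 b\}$ is invariant under the substitution $(a,b)\mapsto(\alpha_1 a,\alpha_1 b)$, because $\alpha_1$ is an involution: with $\tilde a=\alpha_1 a$ and $\tilde b=\alpha_1 b$ one has $\{\tilde a,\tilde b,\alpha_1\tilde a,\alpha_1\tilde b\}=\{\alpha_1 a,\alpha_1 b,a,b\}=S$. So swapping representatives can never reduce $|S\cap T|$, and in particular it does not dispose of the case where exactly one cross of $S$ lies in $T$ --- say $a\in T$ while $\alpha_1 a,b,\alpha_1 b\notin T$. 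Your proposal covers $S\cap T=\emptyset$ (via the Corollary) and the ``mixed'' $|S\cap T|=2$ case (via explicit computation), but leaves the $|S\cap T|=1$ case unaddressed. That case is the heart of the lemma.

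The paper handles all cases at once with a different and simpler device: it sets $a'=a$ when $a\notin T$ and $a'=\tau a=\alpha_1\alpha_2 a$ when $a\in T$ (likewise for $b'$). The point is that $\tau a$ is the next cross around the same vertex, so after deletion of $e$ the riffle $(\tau a,b)$ performs the same vertex identification that $(a,b)$ did in $M$; and $\tau a\notin T$ by your own observation that $T\cap\alpha_1 T=\emptyset$. Your explicit-computation idea for the mixed case would, if carried out, rediscover this substitution, but you need it already for the $|S\cap T|=1$ case, and the passage through $\tau$ rather than through $\alpha_1$ is what makes it work.

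A minor point: your assertion that ``a dual link is not a loop'' is false (a non-twisted loop can be a dual link; a dual bridge is an example). This does not actually damage the structural observation you need, since what matters is that $\alpha_1 c\neq\alpha_2 c$ (no degree-one endpoint) together with $\alpha_1 c\notin\{\alpha_0 c,\alpha_0\alpha_2 c\}$ follows from the hypothesis on~$e$ by a slightly different route than the one you sketch.
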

\begin{proof}
Let $e\equiv (\;\; c \;\;\;\alpha_0\alpha_2 c\;\;)\: (\;\; \alpha_ 0 c \;\;\; \alpha_2 c\;\;)$. When $\{a,b,\alpha_1 a,\alpha_1 b\}\cap \{c, \alpha_0 c,\alpha_2 c,\alpha_0\alpha_2 c\}=\emptyset$, by Corollary~\ref{cor:riffle_deletion_dual_link_bridge}, we can take $a'=a, b'=b$.

If $e$ is a dual link then $a$ and $b$ cannot both belong to $\{c, \alpha_0 c,\alpha_2 c,\alpha_0\alpha_2 c\}$ because they are coincident with a common face (preventing $b=\alpha_0 a$) and, as $e$ is a dual link, $c,\alpha_0 c$ belong to a different face to $\alpha_2 c, \alpha_0\alpha_2 c$ (preventing $b$ from being one of these crosses). 
Suppose then, without loss of generality, that $a=c$ is incident with face $x$ of $M$ and $b\not\in \{c, \alpha_0 c,\alpha_2 c,\alpha_0\alpha_2 c\}$ is either a cross coincident with $a$ on face $x$ or a cross belonging to a different connected component of $M$. 
If we set $a'=\alpha_1\alpha_2 a=\tau a$  then $(M^{(\, a\: b\,)})\backslash e=(M\backslash e)^{(\, a'\: b\,)}$. 

Now suppose that $e$ is a bridge incident with face $z$ and that $a=c$. Possibly $b=\alpha_0\alpha_2 c$, a cross coincident with $a$ on face $z$, but in any event $b\not\in\{\alpha_0 c, \alpha_2 c\}$. 
When $b\neq \alpha_0\alpha_2 c$, the same choice of $a'=\alpha_1\alpha_2 a$ gives $(M^{(\, a\: b\,)})\backslash e=(M\backslash e)^{(\, a'\: b\,)}$, unless $a$ is incident with an endpoint of $e$ of degree one. When $b=\alpha_0\alpha_2 c$, crosses $a'=\alpha_1\alpha_2 a$ and $b'=\alpha_1\alpha_2 b$ belong to different connected components of $M\backslash e$, the edge $e$ is a loop in $M^{(\, a\: b\,)}$, and we have  $(M^{(\, a\: b\,)})\backslash e=(M\backslash e)^{(\, a'\: b'\,)}=M/e$.

In all cases, the cross $a'$ is equal to $a$ when $a\not\in\{c, \alpha_0 c,\alpha_2 c,\alpha_0\alpha_2 c\}$ and to $\tau a$ when $a\in \{c, \alpha_0 c,\alpha_2 c,\alpha_0\alpha_2 c\}$; and likewise $b'$ is set equal to $b$ or $\tau b$ according as it does not or does belong to the crosses of $e$.    
\end{proof}

 The following corollary gives a way to restrict a sequence of vertex gluings to a spanning submap, and will be used in Section~\ref{sec:defining_map_homomorphism} to define the restriction of a map homomorphism to a submap. 

\begin{corollary}\label{cor:riffle_spanning_submap}
Let $M\backslash A$ be a spanning submap of $M$ such that $\sg(M\backslash A)=\sg(M)$, and let $a,b$ be crosses of $M$ incident with distinct vertices $u,v$ and either coincident with a common face or in different connected components. If neither $u$ nor $v$ are isolated vertices in $M\backslash A$, then there are crosses $a',b'$ of $M\backslash A$ such that 
$M^{(\, a\: b\,)}\backslash A=(M\backslash A)^{(\, a'\: b'\,)}$.
\end{corollary}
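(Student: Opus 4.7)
The plan is to iterate Lemma~\ref{lem:riffle_delete_dual_link_bridge} one edge at a time, deleting the edges of $A$ in some arbitrary order. Write $A=\{e_1,\dots,e_k\}$ and $M_i=M\backslash\{e_1,\dots,e_i\}$ for $0\le i\le k$, so $M_0=M$ and $M_k=M\backslash A$. Starting from $a_0=a,\;b_0=b$, I will inductively produce crosses $a_i,b_i$ of $M_i$ representing a vertex gluing of $M_i$ such that
\[
M^{(\,a\: b\,)}\backslash\{e_1,\dots,e_i\}=M_i^{(\,a_i\: b_i\,)},
\]
and setting $a'=a_k,\;b'=b_k$ will yield the corollary.

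The first step is to verify that each $e_{i+1}$ is a dual link or a bridge in $M_i$, which is what lets Lemma~\ref{lem:riffle_delete_dual_link_bridge} apply. By hypothesis $\sg(M_k)=\sg(M_0)$, so by Lemma~\ref{lem:s_submap}\ref{en:prop_s_submap_2} we also have $\eg(M_k)=\eg(M_0)$. Since Table~\ref{tab:d_effect_s} shows that $\eg$ is non-increasing under edge deletion, the chain $\eg(M_0)\ge \eg(M_1)\ge \cdots\ge \eg(M_k)=\eg(M_0)$ forces $\eg(M_i)=\eg(M_{i+1})$ for every $i$. Hence, again by Table~\ref{tab:d_effect_s} (or equivalently by Lemma~\ref{l.link_equiv}\ref{en:lem12_i} applied to $M_i$), the edge $e_{i+1}$ is a dual link or a bridge in $M_i$.

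The second step is to apply Lemma~\ref{lem:riffle_delete_dual_link_bridge} to produce $a_{i+1},b_{i+1}$ in $M_{i+1}$ from $a_i,b_i$ in $M_i$. The main obstacle here is the hypothesis in Lemma~\ref{lem:riffle_delete_dual_link_bridge} that a bridge have no endpoint of degree one. Suppose $e_{i+1}$ is a bridge in $M_i$ with a degree-one endpoint $w$. Then the only edge of $w$ in $M_i$ is $e_{i+1}$, so all edges of $w$ in $M$ lie in $\{e_1,\dots,e_{i+1}\}\subseteq A$, whence $w$ is isolated in $M_k=M\backslash A$. Because $a_i,b_i$ are obtained from $a,b$ by repeatedly replacing a cross by its image under the current vertex permutation $\tau$ (as seen in the proof of Lemma~\ref{lem:riffle_delete_dual_link_bridge}), the crosses $a_i,b_i$ remain incident with the original vertices $u,v$, which by hypothesis are non-isolated in $M\backslash A$; therefore $w\notin\{u,v\}$ and $a_i,b_i$ are not incident with the degree-one endpoint of $e_{i+1}$. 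Inspecting the proof of Lemma~\ref{lem:riffle_delete_dual_link_bridge}, the only obstruction to choosing $a_{i+1}\in\{a_i,\tau a_i\}$ and $b_{i+1}\in\{b_i,\tau b_i\}$ is precisely the ``unless $a$ is incident with an endpoint of $e$ of degree one'' clause, which is thus avoided; the same case analysis then produces the required $a_{i+1},b_{i+1}$ and, by the conclusion of Lemma~\ref{lem:riffle_delete_dual_link_bridge}, they represent a valid vertex gluing of $M_{i+1}$.

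Finally, to continue the induction to the next step I only need to observe that the vertex non-isolation hypothesis persists: the vertices incident with $a_{i+1},b_{i+1}$ in $M_{i+1}$ are still $u,v$, and these remain non-isolated in $M\backslash A=M_{i+1}\backslash\{e_{i+2},\dots,e_k\}$. Iterating to $i=k$ yields crosses $a'=a_k,\;b'=b_k$ in $M\backslash A$ satisfying $M^{(\,a\: b\,)}\backslash A=(M\backslash A)^{(\,a'\: b'\,)}$, as required.
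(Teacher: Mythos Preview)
Your proof is correct and follows essentially the same approach as the paper's one-line argument (induct on the edges of $A$, using Lemma~\ref{l.link_equiv}\ref{en:lem12_i} to ensure each deletion is of a dual link or bridge, then invoke Lemma~\ref{lem:riffle_delete_dual_link_bridge}). You usefully spell out why the degree-one endpoint hypothesis of Lemma~\ref{lem:riffle_delete_dual_link_bridge} causes no trouble, which the paper leaves implicit.
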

\begin{proof} Lemma~\ref{lem:riffle_delete_dual_link_bridge} and induction yield the result as any spanning submap of the same signed genus as $M$ is,  by Lemma~\ref{l.link_equiv}(i), obtained by a sequence of deletions of dual links or bridges.  \end{proof}

 The following lemma records the effect of switching the order  in which two riffles are composed. 
 \begin{lemma}\label{lem:riffle_reorder}
 For two pairs of distinct crosses $a,b$ and $a',b'$ of a map $M\equiv(C,\alpha_0, \alpha_1, \alpha_2)$, the following statements hold:

\begin{enumerate}[label=(\roman*)]
    \item 
    \label{en:riffle_reorder_1}
If $a,b,a',b'$ are distinct,  then
 $(M^{(\, a\:b\,)})^{(\, a'\:b'\,)}=(M^{(\, a'\:b'\,)})^{(\, a\:b\,)}$,
 and the same identity holds with $(\;\; \alpha_1 a \;\;\; \alpha_1 b\;\;)$ in place of $(\;\; a \;\;\; b\;\;)$ or with $(\;\; \alpha_1 a' \;\;\; \alpha_1 b'\;\;)$  in place of $(\;\; a' \;\;\; b'\;\;)$. 
\item 
\label{en:riffle_reorder_2}
If $a=a'$ and $b\neq b'$, then 
$(M^{(\, a\:b\,)})^{(\, a\:b'\,)}=(M^{(\, a\:b'\,)})^{(\, \alpha_1 a\:\;\alpha_1b\,)}$,
and the same identity holds with $(\;\; \alpha_1 a \;\;\;\alpha_1 b\;\;)$  in place of $(\;\; a \;\;\;  b\;\;)$ or with $(\;\; \alpha_1 a \;\;\; \alpha_1 b'\;\;)$  in place of $(\;\;  a \;\;\;  b'\;\;)$.
\item 
\label{en:riffle_reorder_3}
If $a=a'$ and $b=b'$, then $(M^{(\, a\: b\,)})^{(\, a\: b\,)}=M$,
and the same identity holds with either transposition $(\;\;  a \;\;\; b\;\;)$ (possibly both) replaced by $(\;\; \alpha_1 a \;\;\; \alpha_1 b\;\;)$.
\end{enumerate}
\end{lemma}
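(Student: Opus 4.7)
The approach is algebraic. Since each riffle leaves $\alpha_0,\alpha_2$ untouched and replaces $\alpha_1$ by its conjugate $(\,a\:b\,)\alpha_1(\,a\:b\,)$, composing two riffles conjugates $\alpha_1$ by the product of the two transpositions, and two iterated riffles yield the same map exactly when the two composite permutations conjugate $\alpha_1$ to the same involution. The plan is to reduce each part of the lemma to such a check in $\mathrm{Sym}(C)$, making repeated use of the identity $M^{(\,a\:b\,)}=M^{(\,\alpha_1 a\:\alpha_1 b\,)}$ recorded just after Definition~\ref{def:merging}.

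For part~\ref{en:riffle_reorder_1} the transpositions $(\,a\:b\,)$ and $(\,a'\:b'\,)$ have disjoint supports and so commute in $\mathrm{Sym}(C)$, producing equal conjugates of $\alpha_1$ in either order. Part~\ref{en:riffle_reorder_3} is the elementary observation $(\,a\:b\,)^2=\mathrm{id}$, so conjugating $\alpha_1$ twice by $(\,a\:b\,)$ returns $\alpha_1$. In both parts the variants with one or both transpositions replaced by $(\,\alpha_1 a\:\alpha_1 b\,)$ or $(\,\alpha_1 a'\:\alpha_1 b'\,)$ follow immediately from $M^{(\,a\:b\,)}=M^{(\,\alpha_1 a\:\alpha_1 b\,)}$.

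Part~\ref{en:riffle_reorder_2} is the only case where the two transpositions do not commute: with $a=a'$ and $b\neq b'$, the transpositions $(\,a\:b\,)$ and $(\,a\:b'\,)$ share the element $a$, and their product is a $3$-cycle. The key step is to rewrite the left-hand side as $(M^{(\,\alpha_1 a\:\alpha_1 b\,)})^{(\,a\:b'\,)}$ by applying $M^{(\,a\:b\,)}=M^{(\,\alpha_1 a\:\alpha_1 b\,)}$, which brings it into the setting of part~\ref{en:riffle_reorder_1}: in the generic situation $(\,\alpha_1 a\:\alpha_1 b\,)$ and $(\,a\:b'\,)$ have disjoint supports, since $\alpha_1$ is fixed-point-free, the relation $b\neq\alpha_1 a$ is forced by the requirement that the riffle $(\,a\:b\,)$ yield an involution, and remaining potential coincidences $b'\in\{\alpha_1 a,\alpha_1 b\}$ can be eliminated by a further substitution $M^{(\,a\:b'\,)}=M^{(\,\alpha_1 a\:\alpha_1 b'\,)}$ on the other factor. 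Part~\ref{en:riffle_reorder_1} then yields $(M^{(\,a\:b'\,)})^{(\,\alpha_1 a\:\alpha_1 b\,)}$, which is the right-hand side, and the variant identities are handled by the same substitution trick applied to the opposite factor.

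The hardest step is expected to be the degenerate sub-case $b'=\alpha_1 b$ (or, symmetrically, $b'=\alpha_1 a$), where both substitutions force back the same coincidence and part~\ref{en:riffle_reorder_1} cannot be applied directly. In this sub-case I would fall back on a direct verification on the finite set $\{a,b,b',\alpha_1 a,\alpha_1 b,\alpha_1 b'\}$, outside of which both iterated involutions agree with $\alpha_1$; the required equality then reduces to checking how each side acts on this handful of crosses, which is a short bookkeeping exercise unwinding Definition~\ref{def:merging}.
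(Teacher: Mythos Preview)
Your proposal is correct and, for parts~\ref{en:riffle_reorder_1} and~\ref{en:riffle_reorder_3}, identical to the paper's argument: disjoint transpositions commute, and conjugating twice by the same transposition is the identity.

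For part~\ref{en:riffle_reorder_2} you take a slightly different route. The paper simply writes out $\alpha_1=(\,a\;\alpha_1 a\,)(\,b\;\alpha_1 b\,)(\,b'\;\alpha_1 b'\,)\dots$ and tracks these three transpositions through the two orders of conjugation, verifying by hand that both yield $(\,a\;\alpha_1 b'\,)(\,b\;\alpha_1 a\,)(\,b'\;\alpha_1 b\,)\dots$. Your idea---rewrite $(M^{(a\,b)})^{(a\,b')}$ as $(M^{(\alpha_1 a\,\alpha_1 b)})^{(a\,b')}$ and then invoke part~\ref{en:riffle_reorder_1}---is a clean shortcut that avoids the explicit bookkeeping in the generic case. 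Two small remarks: your justification that $b\neq\alpha_1 a$ should say ``yield a \emph{fixed-point-free} involution'' (conjugation by any transposition always gives an involution; the point is that the paper's Definition~\ref{def:merging}, and the sentence following it about replacing the pair $(\,a\;\alpha_1 a\,)(\,b\;\alpha_1 b\,)$, tacitly assumes these are distinct transpositions). And note that the paper's own computation also works only in the generic situation where $a,\alpha_1 a,b,\alpha_1 b,b',\alpha_1 b'$ are six distinct crosses---neither proof spells out the degenerate coincidences, so your acknowledgement of them and proposed direct check is, if anything, more careful than the original.
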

\begin{proof} 



Suppose first that no cross among $a',b',\alpha_1 a',\alpha_1 b'$ is a cross among $a,b,\alpha_1 a, \alpha_1 b$. Then, $(\;\; a \;\;\; b\;\;)\;\;(\;\; a' \;\;\; b'\;\;)=(\;\; a' \;\;\; b'\;\;)\;\;(\;\; a \;\;\; b\;\;)$, and thus conjugating $\alpha_1$ by $(\;\; a \;\;\; b\;\;)$ and then by $(\;\; a' \;\;\; b'\;\;)$ gives the same permutation as when conjugating first by $(\;\; a' \;\;\; b'\;\;)$ and then by  $(\;\; a \;\;\; b\;\;)$. Hence, $(M^{(\, a\:b\,)})^{(\, a'\:b'\,)}=(M^{(\, a'\:b'\,)})^{(\, a\:b\,)}$, which proves~\ref{en:riffle_reorder_1}.
The same argument shows that the identity holds with $\alpha_1 a, \alpha_1 b$ replacing $a,b$ or with $\alpha_1 a', \alpha_1 b'$  replacing $a',b'$.

Suppose now that $a= a'$ and $b\neq b'$.  
 The involution 
 \begin{equation}\label{eq:zerosecond}
\alpha_1=(\;\; a \;\;\; \alpha_1 a\;\;)\;\;(\;\; b \;\;\; \alpha_1 b\;\;)\;\;(\;\; b' \;\;\; \alpha_1 b'\;\;)\:\dots\end{equation}
after conjugating by $(\;\; a \;\;\; b'\;\;)$ or by $(\;\; \alpha_1 a \;\;\; \alpha_1 b'\;\;)$  becomes 
$(\;\; a \;\;\; \alpha_1 b'\;\;)\;\;(\;\; b \;\;\; \alpha_1 b\;\;)\;\;(\;\; b' \;\;\; \alpha_1 a\;\;)\:\dots$
Then, conjugating by $(\;\; \alpha_1 a \;\;\; \alpha_1 b\;\;)$ gives 
\begin{equation}\label{eq:firstsecond}(\;\; a \;\;\; \alpha_1 b'\;\;)\;\;(\;\; b \;\;\; \alpha_1 a\;\;)\;\;(\;\; b' \;\;\; \alpha_1 b\;\;)\:\dots\end{equation}
\noindent Conjugating now the involution $\alpha_1$ in~\eqref{eq:zerosecond} by $(\;\; a \;\;\; b\;\;)$ or by $(\;\; \alpha_1 a \;\;\; \alpha_1 b\;\;)$  gives 
\[(\;\; b \;\;\; \alpha_1 a\;\;)\;\;(\;\; a \;\;\; \alpha_1 b\;\;)\;\;(\;\; b' \;\;\; \alpha_1 b'\;\;)\:\dots,\]
and then conjugating by $(\;\; a \;\;\;  b'\;\;)$  gives 
$(\;\; b \;\;\; \alpha_1 a\;\;)\;\;(\;\; b' \;\;\; \alpha_1 b\;\;)\;\;(\;\; a \;\;\; \alpha_1 b'\;\;)\:\dots,$
which is the same involution  as~\eqref{eq:firstsecond}. This establishes~\ref{en:riffle_reorder_2}. 

Finally, when $a=a'$ and $b=b'$, conjugating $\alpha_1$ by $(\;\; a \;\;\; b\;\;)$ and then again by $(\;\; a \;\;\; b\;\;)$ is to conjugate by the product of transpositions  $(\;\; a \;\;\; b\;\;)\;\; (\;\; a \;\;\; b\;\;)$, equal to the identity permutation. This yields~\ref{en:riffle_reorder_3}. 
\end{proof}


Before stating the next lemma, we define a notion needed for its proof and used further in Section~\ref{sec:cutting} below.     
Distinct crosses $a,b, a', b'$  are {\em interlacing} in a cycle of crosses $\gamma$ 
 if  $\gamma=(\;\;a \;\;\; \mathbf x \;\;\; a' \;\;\; \allowbreak \mathbf y \;\;\; \allowbreak b \;\;\; \allowbreak \mathbf z \;\;\; b' \;\;\; \mathbf w\;\;)$  or $\gamma=(\;\; a \;\;\; \mathbf x \;\;\; b' \;\;\; \mathbf y \;\;\; b \;\;\; \mathbf z \;\;\; a' \;\;\; \mathbf w\;\;)$, where $\mathbf x, \mathbf y, \mathbf z$ and~$\mathbf w$ are (possibly empty) sequences of crosses.

\begin{lemma}\label{lem:riffle_reorder_condition}
Let $a,b$ and $a',b'$ be two pairs of distinct crosses of a map $M\equiv(C,\alpha_0, \alpha_1, \alpha_2)$, and let $M^{(\,a\:b\,)}$ be a vertex gluing of $M$ and $(M^{(\,a\:b\,)})^{(\,a'\:b'\,)}$ a vertex gluing of $M^{(\,a\:b\,)}$. Then, either
\begin{enumerate}[label=(\roman*)]
    \item \label{en:ppp.1} $M^{(\,a'\:b'\,)}$ is a vertex gluing of $M$ and $(M^{(\,a'\:b'\,)})^{(\,a\:b\,)}$ is a vertex gluing of $M^{(\,a'\:b'\,)}$, and $(M^{(\,a\:b\,)})^{(\,a'\:b'\,)}=\allowbreak (M^{(\,a'\:b'\,)})^{(\,a\:b\,)}$, or
    \item \label{en:ppp.2} $M^{(\,\alpha_1 a'\:\alpha_1 b'\,)}$ is a vertex gluing of $M$ and $(M^{(\,\alpha_1 a'\:\alpha_1 b'\,)})^{(\,a\:b\,)}$ is a vertex gluing of $M^{(\,\alpha_1 a'\:\alpha_1 b'\,)}$, and $(M^{(\,a\:b\,)})^{(\,a'\:b'\,)}=(M^{(\,\alpha_1 a'\:\alpha_1 b'\,)})^{(\,a\:b\,)}$, or
    \item \label{en:ppp.3} $M^{(\,a'\:b'\,)}$ is a vertex gluing of $M$ and $(M^{(\,a'\:b'\,)})^{(\,\alpha_1 a\:\alpha_1b\,)}$ is a vertex gluing of $M^{(\,a'\: b'\,)}$, and $(M^{(\,a\:b\,)})^{(\,a'\:b'\,)}=(M^{(\,a'\:b'\,)})^{(\,\alpha_1 a\:\alpha_1b\,)}$. 
\end{enumerate}
\end{lemma}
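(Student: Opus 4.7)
The plan is a case analysis on the size of the intersection $I=\{a,b,\alpha_1 a,\alpha_1 b\}\cap\{a',b',\alpha_1 a',\alpha_1 b'\}$, using the algebraic identities of Lemma~\ref{lem:riffle_reorder} together with the invariance $M^{(\,c\:d\,)}=M^{(\,\alpha_1 c\:\alpha_1 d\,)}$. Both sets are $\alpha_1$-invariant and each has cardinality four: if $a=\alpha_1 b$ then $b$ lies on the reverse vertex cycle at the vertex of $a$ (since $\tau(\alpha_2 a)=\alpha_1 a$), so $a,b$ would sit at a common vertex, contradicting that $(a,b)$ is a vertex gluing; the same argument in $M^{(\,a\:b\,)}$ shows that $a',b',\alpha_1 a',\alpha_1 b'$ are also four distinct crosses. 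Thus $|I|\in\{0,2,4\}$. I first exclude the case $|I|=4$: the explicit cycle $\widetilde{\tau}_w$ displayed in the proof of Lemma~\ref{lem:riff_signedgenus} shows that each of $a,b,\alpha_1 a,\alpha_1 b$ lies at the merged vertex $w$ of $M^{(\,a\:b\,)}$, so if $\{a',b'\}\subseteq\{a,b,\alpha_1 a,\alpha_1 b\}$ both $a'$ and $b'$ would sit at $w$, making $(a',b')$ fail to be a vertex gluing of $M^{(\,a\:b\,)}$, contrary to hypothesis.

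For $|I|=0$, Lemma~\ref{lem:riffle_reorder}\ref{en:riffle_reorder_1} gives $(M^{(\,a\:b\,)})^{(\,a'\:b'\,)}=(M^{(\,a'\:b'\,)})^{(\,a\:b\,)}$, and I aim for conclusion~\ref{en:ppp.1}. It remains to verify that $a',b'$ satisfy the vertex gluing conditions in $M$ and that $a,b$ satisfy them in $M^{(\,a'\:b'\,)}$. Since $\{a',b'\}$ is disjoint from $\{a,b,\alpha_1 a,\alpha_1 b\}$, the vertex and face cycles through $a'$ and $b'$ in $M^{(\,a\:b\,)}$ are obtained from those in $M$ solely by the merging and splitting recorded in the proof of Lemma~\ref{lem:riff_signedgenus}. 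A short case check, according to which of Definition~\ref{def:vertex_gluing}\ref{en:vertex_gluing_1}--\ref{en:vertex_gluing_2} is realised by the first riffle and in which resulting cycles $a'$ and $b'$ land, shows that the vertex gluing conditions transfer from $M^{(\,a\:b\,)}$ back to $M$, and symmetrically between $M^{(\,a'\:b'\,)}$ and $M$.

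For $|I|=2$, using $M^{(\,c\:d\,)}=M^{(\,d\:c\,)}=M^{(\,\alpha_1 c\:\alpha_1 d\,)}$ to select a convenient representative of $(a',b')$, I may assume $a=a'$ and $b\neq b'$. Lemma~\ref{lem:riffle_reorder}\ref{en:riffle_reorder_2} then yields
\[(M^{(\,a\:b\,)})^{(\,a\:b'\,)}=(M^{(\,a\:b'\,)})^{(\,\alpha_1 a\:\alpha_1 b\,)},\]
matching conclusion~\ref{en:ppp.3} in the reduced variables (or conclusion~\ref{en:ppp.2} if the chosen representative was the $\alpha_1$-conjugate of the original pair). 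It remains to verify that $(a,b')$ is a vertex gluing of $M$ and that $(\alpha_1 a,\alpha_1 b)$ is a vertex gluing of $M^{(\,a\:b'\,)}$. The first follows because the first riffle $(a,b)$ merges the vertex cycles at $u$ and $v$ into one, so $b'$ lying on a $\widetilde{\tau}$-cycle distinct from the cycle of $a$ forces $b'$ to be at a vertex of $M$ other than $u$; the face condition pulls back since $a$ lies on the face $x$ of $M^{(\,a\:b\,)}$ whose cycle is a subsequence of the cycle of $z$ in $M$, and the different-components alternative is immediate. The main obstacle is the second verification: the crosses $\alpha_1 a,\alpha_1 b$ sit on the reverse vertex cycles at $u$ and $v$ in $M$, and one must track, using the cycle formulas from the proof of Lemma~\ref{lem:riff_signedgenus}, how the first riffle $(a,b')$ repositions them in $M^{(\,a\:b'\,)}$ so that they remain at distinct vertices while still coincident with a common face (or in different components). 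The key input for this traceback is that $(a,b')$ is already known to be a vertex gluing of $M^{(\,a\:b\,)}$ through the cross $a$, which constrains the location of $b'$ relative to the cycles containing $\alpha_1 a$ and $\alpha_1 b$ tightly enough to make the conclusion work out.
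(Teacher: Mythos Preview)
Your plan is organised differently from the paper's. The paper does not split on $|I|$; instead it runs a four-way split on whether $(a,b)$ satisfies Definition~\ref{def:vertex_gluing}\ref{en:vertex_gluing_1} or \ref{en:vertex_gluing_2} in $M$ and whether $(a',b')$ satisfies \ref{en:vertex_gluing_1} or \ref{en:vertex_gluing_2} in $M^{(\,a\:b\,)}$, and in each case directly verifies the analogous conditions for $(a',b')$ in $M$ and for $(a,b)$ (or $(\alpha_1 a,\alpha_1 b)$) in $M^{(\,a'\:b'\,)}$ by tracking the explicit face and vertex cycles written out in the proof of Lemma~\ref{lem:riff_signedgenus}. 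Your ``short case check'' in the $|I|=0$ branch \emph{is} that four-way split, and it is where almost all of the work sits; in particular, in the subcase where all four crosses are incident with the same face of $M$, one needs the observation that $\{a,b\}$ and $\{a',b'\}$ cannot interlace in the interleaved face cycle $\gamma=(\,a\;\;\alpha_0 a\;\;\phi a\;\;\alpha_0\phi a\;\;\cdots\;\;\alpha_1 a\,)$, which is not as immediate as your phrasing suggests.

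There is a genuine imprecision in your $|I|=2$ reduction. The invariance $N^{(\,c\:d\,)}=N^{(\,\alpha_1^N c\:\alpha_1^N d\,)}$ uses the $\alpha_1$ of the map $N$ being riffled; for the \emph{second} riffle the relevant map is $M^{(\,a\:b\,)}$, whose involution is $\widetilde{\alpha}_1=(\,a\;b\,)\alpha_1(\,a\;b\,)$, not $\alpha_1$. You therefore cannot, as written, ``select a convenient representative of $(a',b')$'' using $\alpha_1$ of $M$. The reduction to $a=a'$ \emph{can} be made, but you must do it by relabelling the \emph{first} pair via $M^{(\,a\:b\,)}=M^{(\,\alpha_1 a\:\alpha_1 b\,)}$ together with the two swaps, and then check that this relabelling sends the target conclusion~\ref{en:ppp.3} in the new variables to one of \ref{en:ppp.1}--\ref{en:ppp.3} in the original variables. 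Your parenthetical about landing in~\ref{en:ppp.2} is not quite right under this correct relabelling.

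Finally, the ``main obstacle'' you flag is genuinely the crux, and your hint (``constrains the location of $b'$ \ldots\ tightly enough'') does not resolve it. The paper handles this by writing $\gamma$ in one of two forms $\gamma_1=(\,a\;\mathbf x\;b\;\mathbf y\;b'\;\mathbf z\,)$ or $\gamma_2=(\,a\;\mathbf x\;b'\;\mathbf y\;b\;\mathbf z\,)$ according to the cyclic order of $a,b,b'$, and then observing that in $\gamma_2$ the pairs $\{\alpha_1 a,\alpha_1 b\}$ and $\{a',b'\}$ do not interlace (giving~\ref{en:ppp.3}), while in $\gamma_1$ it is $\{\alpha_1 a',\alpha_1 b'\}$ and $\{a,b\}$ that do not interlace (giving~\ref{en:ppp.2}). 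Your approach via Lemma~\ref{lem:riffle_reorder}\ref{en:riffle_reorder_2} gives the \emph{equality} for~\ref{en:ppp.3} automatically, but the non-interlacing analysis is still needed to certify the vertex-gluing conditions, and you have not supplied it.
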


\begin{proof} 
The final equalities of statements \ref{en:ppp.1}, \ref{en:ppp.2} and \ref{en:ppp.3}  follow from Lemma~\ref{lem:riffle_reorder}. To establish the remainder of the lemma we shall show that if 
$a,b$ are either
\begin{enumerate}[label=(h\arabic*)]
    \item
    \label{en:st11} incident with distinct vertices 
    and coincident with a common face of $M$, or
    \item
    \label{en:st21} incident with vertices and faces in different connected components of $M$,
    \end{enumerate}
    and $a',b'$ are either
\begin{enumerate}[label=(h\arabic*')]
    \item 
    \label{en:st12} incident with distinct vertices 
    and coincident with a common face of $M^{(\,a\:b\,)}$, or
    \item 
    \label{en:st22} incident with vertices and faces in different connected components of $M^{(\,a\:b\,)}$,
    \end{enumerate}
then $a',b'$ are either
\begin{enumerate}[label=(c\arabic*)]
    \item
    \label{en:st31} incident with distinct vertices 
    and coincident with a common face of $M$, or
    \item
    \label{en:st32} incident with vertices and faces in different connected components of $M$,
    \end{enumerate}
    and $a,b$ are either
\begin{enumerate}[label=(c\arabic*')]
    \item
    \label{en:st41} incident with distinct vertices 
    and coincident with a common face of $M^{(\,a'\:b'\,)}$, or
    \item
    \label{en:st42} incident with vertices and faces in different connected components of $M^{(\,a'\:b'\,)}$,
    \end{enumerate}
\noindent with the appropriate changes for cases \ref{en:ppp.2} and \ref{en:ppp.3} of the statement: For \ref{en:ppp.2} we  replace $a'$ by $\alpha_1 a'$ and $b'$ by $\alpha_1 b'$ in \ref{en:st31}, \ref{en:st32}, \ref{en:st41} and \ref{en:st42}; 
for \ref{en:ppp.3} we  replace $a$ by $\alpha_1 a$ and $b$ by $\alpha_1 b$ in \ref{en:st31}, \ref{en:st32}, \ref{en:st41} and \ref{en:st42}.

    We take in turn the four cases according to the position of crosses $a,b$ in $M$ and the position of $a',b'$ in $M^{(\, a\: b\,)}$. We notice that cases \ref{en:ppp.2} and \ref{en:ppp.3} only occur in the first case of the proof below. 

\emph{Case \ref{en:st11} and \ref{en:st12}:}
Riffling $a,b$ in $M$ splits the face $z$ in $M$ containing $a,b$ into two faces $x,y$, the first containing $a$ and the second $b$. By \ref{en:st12}, there is a face $z'$ in $M^{(\,a\:b\,)}$ containing  $a',b'$. 

If $z'\neq x$ and $z'\neq y$, then the face $z'$ is also a face in $M$, and we obtain \ref{en:st31} (for if the vertices of $a'$ and $b'$ are distinct in $M^{(\,a\:b\,)}$, then they are also distinct in $M$ as, since we are performing a vertex gluing, there is a surjective mapping from the vertices of $M$ onto the vertices of  $M^{(\,a\:b\,)}$).
Now, $a$ and $b$ are also coincident with the same face in $M^{(\,a'\:b'\,)}$, by the assumption that $z'\neq x$ and $z'\neq y$ and \ref{en:st12}; furthermore, they are also incident with distinct vertices in $M^{(\,a'\:b'\,)}$ as we have just glued the vertex incident with $a'$ and the vertex incident with $b'$, and if the two distinct vertices incident with $a$ and $b$ in $M$ now become the same in $M^{(\,a'\:b'\,)}$, this means that they were also glued together in $M^{(\,a\:b\,)}$, thus contradicting the assumption \ref{en:st12}. This shows \ref{en:st41} as stated.

If $z'=x$ (the case $z'=y$ is argued similarly), then we conclude that $\{a,b,a',b'\}$ are all incident with the same face $z$ in $M$. Now, the condition \ref{en:st12} implies that $\{a,b\}\neq \{a',b'\}$, for otherwise, $a'$ and $b'$ would lie in two different faces, contradicting \ref{en:st12}. Consider the  cycle of crosses $\gamma=(\;\;a\;\;\;\alpha_0 a\;\;\;\phi a \;\;\; \alpha_0\phi a \;\;\; \cdots \;\;\;\alpha_1 a \;\;)$ interleaving the two permutation cycles of the face $z$ (one of them being in its order, and the other in reverse).
Assume first that $|\{a,b,a',b'\}|=4$, then the four crosses cannot be interlacing in $\gamma$ for otherwise \ref{en:st12} would not hold. Then we obtain \ref{en:st31} and \ref{en:st41}.
Now assume that $|\{a,b,a',b'\}|=3$, say without loss of generality that $a'=a$.
Then there are two cases for $\gamma$, either
$\gamma_1=(\;\;a=a'\;\;\; \mathbf{x}\;\;\; b\;\;\; \mathbf{y}\;\;\; b'\;\;\; \mathbf{z}\;\;)$  or $\gamma_2=(\;\;a=a'\;\;\; \mathbf{x}\;\;\; b'\;\;\; \mathbf{y}\;\;\; b\;\;\; \mathbf{z}\;\;)$ where $\mathbf x,\mathbf y,\mathbf z$ are non-empty sequences of crosses.\footnote{The cyclicity of $\gamma$ and the fact that $\{a',b'\}$ and $\{a,b\}$
are coincident with the face makes the relative position of $\{a',\alpha_1a\}$ and $\{b',\alpha_1b'\}$ the same within $\gamma_1$ and $\gamma_2$.}
In the first case,  writing $\mathbf{y}=(\mathbf y',\alpha_1 b')$ and $\mathbf z=(\mathbf z',\alpha_1 a')$, we have $\gamma_1=(\;\; \alpha_1 a' \;\;\; a\;\;\; \mathbf{x}\;\;\;  b\;\;\; \mathbf{y}'\;\;\; \alpha_1 b'\;\;\;b' \;\;\; \mathbf{z}'\;\;)$, so
$\{\alpha_1 a',\alpha_1 b'\}$ and $\{a,b\}$ are not interlacing, and we conclude \ref{en:st31} and  \ref{en:st41} hold with $\alpha_1 a'$ and $\alpha_1 b'$ (thus obtaining part \ref{en:ppp.2} of the statement).
In the second case,  writing $\mathbf{y}=(\mathbf y',\alpha_1 b)$ and $\mathbf z=(\mathbf z',\alpha_1 a)$, we have $\gamma_{2}=(\;\; \alpha_1 a \;\;\; a'\;\;\; \mathbf{x} \;\;\; b'\;\;\; \mathbf{y}'\;\;\;\alpha_1b\;\;\;b \;\;\; \mathbf{z}'\;\;)$, so  
$\{\alpha_1 a,\alpha_1 b\}$ and $\{a',b'\}$ are not interlacing, and we have \ref{en:st31} and  \ref{en:st41}  with $\alpha_1 a$ and $\alpha_1 b$ (thus obtaining part \ref{en:ppp.3} of the statement).


\emph{Case \ref{en:st11} and \ref{en:st22}:}
As riffling $a,b$ in $M$ preserves connected components, crosses $a',b'$ must also belong to different connected components of $M$. Thus \ref{en:st32} follows. We now show that  \ref{en:st41} holds. When riffling $a',b'$ in $M$, the only alteration to the connected component of $M$ containing $a$ and $b$ may be the addition of some crosses to one of its vertices. Thus, if $a$ and $b$ are incident with distinct vertices in $M$, they are also incident with distinct vertices in $M^{(\,a'\:b'\,)}$. The effect on the faces of this connected component  is the addition of crosses to one of its faces. Thus, if $a$ and $b$ are coincident with a single face in $M$, they are also coincident with a face in $M^{(\,a'\:b'\,)}$. This shows~\ref{en:st41}.

\emph{Case \ref{en:st21} and \ref{en:st12}:}
The riffling  of $a,b$ in $M$ merges their respective faces $x, y$ into a face $z$, with which $a,b$ are coincident in $M^{(\, a\:b\,)}$. 
If the face $z'$ of $M^{(\,a\:b\,)}$ containing $a'$ and $b'$ is not equal to $z$, then $a',b'$ remain coincident with this face in $M$, and we have \ref{en:st31} and \ref{en:st42}.
If $z'=z$ then there are two possibilities. The first is that $a',b'$ are coincident with a common face of $M$ (either $x$ or $y$), so \ref{en:st31} holds; also  \ref{en:st42} holds since gluing the vertices of $a'$ and $b'$ in one connected component does not affect the other connected component. 
The second possibility is that 
$a'$ is incident with the same face of $M$ as $a$ or $b$, say the face $x$ (but $a$ and $a'$ are not necessarily coincident), and $b'$ is incident with the face $y$. In this case \ref{en:st32} holds. Now, in $M^{(\,a\:b\,)}$, crosses $a'$ and $b'$ are incident with the same face as $a$ and $b$, and~\ref{en:st12} implies that $a'$ and $b'$ are in fact coincident with this face.
Crosses $a$ and $b$ are in the same face permutation cycle as $a'$ and $b'$ if both pairs $\{a,a'\}$ and $\{b,b'\}$ are coincident on their faces of $M$, and different ones if both these pairs are incident but not coincident on their faces of $M$. (Hypothesis~\ref{en:st12} prevents just one of these pairs being coincident and the other not.) 
Furthermore, hypothesis \ref{en:st12} also implies that $a'$ and $b'$ are incident with distinct vertices in $M^{(\,a\:b\,)}$. This shows \ref{en:st41}. 

\emph{Case \ref{en:st21} and \ref{en:st22}:}
Riffling $a$ and $b$ in $M$ merges the two connected components of $a$ and $b$ into one, while the connected components of $a'$ and $b'$ are not merged. 
Thus, the crosses $\{a,b,a',b'\}$ span at least three connected components in $M$. This shows that \ref{en:st32} and \ref{en:st42} hold.
\end{proof}

\subsection{Duplicate edges}\label{sec:duplicate_edges}

If $e$ and $f$ are distinct parallel edges of $\Gamma$, then the graphs obtained after deleting $e$ and $f$, respectively, are isomorphic via  
the mapping fixing vertices and fixing all edges except $e$ and $f$, which are swapped. 
Parallel edges are interchangeable when it comes to the existence of homomorphisms: there is a homomorphism from $\Gamma\backslash e$ to a graph $\Gamma'$ if and only if there is a homomorphism from $\Gamma\backslash f$ to $\Gamma'$, and a redundancy of one of the parallel edges in that there is a homomorphism from $\Gamma$ to $\Gamma'$ if and only if there is a homomorphism from $\Gamma\backslash e$ to $\Gamma'$. The same statements hold for homomorphisms from the graph $\Gamma'$ rather than to it. 

There are, however, non-isomorphic maps with the same underlying graph which differ only in the placement of parallel edges. For example,  a plane 4-cycle with a chord and an edge added in parallel to the chord, in one way bounding a face of degree two and a face of degree three, and in the other bounding two faces of degree three. 

In order to carry over to maps the property of parallel edges in graphs being indistinguishable when it comes to homomorphisms, we need to add a topological constraint. 
\begin{definition}\label{def:duplicate_edges}
Two edges of a map $M$ are {\em duplicate} if they are incident with a common face of degree two. 
\end{definition}

  Figure \ref{fig:map_parallel} illustrates different examples of duplicate and non-duplicate edges. One can see that duplicate edges can be merged into one while remaining in the surface of the map (much as pairs of vertices on a common face can be glued while remaining in the surface of the map). 

\begin{figure}[htb]
\centering
\includegraphics[width=0.9\textwidth]{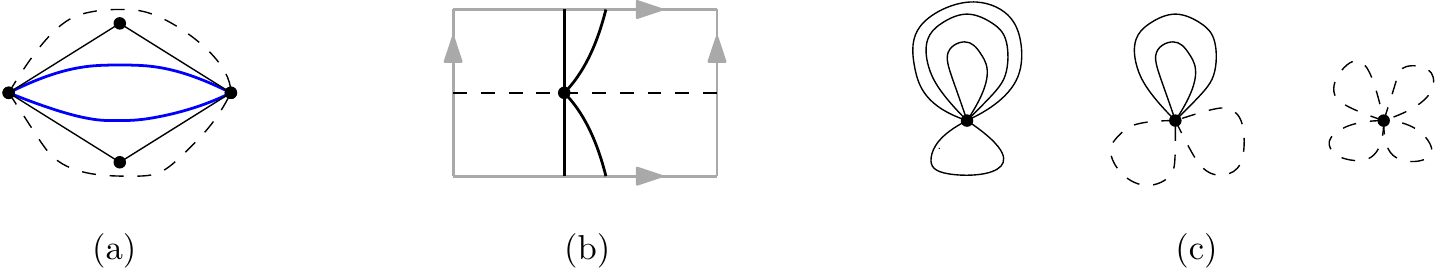}
\caption{\small (a) Two pairs (thicker blue and dashed) of duplicate edges in a plane map, (b) two duplicate loops in the torus (thicker edges), and a loop that is not duplicate with the others (dashed), (c) three plane embeddings of four loops on a vertex, the ones dashed are not duplicate with any other loop, the non-dashed ones are duplicate loops. }
 \label{fig:map_parallel}
\end{figure}

\begin{definition}\label{def:edge_gluing}
 Let $M\equiv (C,\alpha_0,\alpha_1, \alpha_2)$ be a map containing crosses $a$ and $b=\phi a$  such that $e \equiv (\;\; a \;\;\; \alpha_0\alpha_2 a\;\;)\;(\;\; \alpha_0 a \;\;\; \alpha_2 a\;\;)$ and $f\equiv (\;\; b \;\;\;\alpha_0\alpha_2 b\;\;)\;(\;\;\alpha_0 b \;\;\; \alpha_2 b\;\;)$ are duplicate edges, bounding the face $(\;\; a \;\;\; b\;\;)\;(\;\; \alpha_0 a\;\;\;\alpha_0 b\;\;)$ of degree two. The map obtained by {\em gluing} $e$ and $f$ is $M^{[\,a\;b\,]}\equiv(\overbar{C},\overbar{\alpha}_0,\overbar{\alpha}_1,\overbar{\alpha}_2)$, in which 
\begin{itemize}
\item $\overbar{C}=C\backslash\{a,\alpha_0 a,b,\alpha_0 b\}$;
    \item $\overbar{\alpha}_0=\alpha_0$ and $\overbar{\alpha}_1=\alpha_1$ on $\overbar{C}$;  \item $\overbar{\alpha}_2=\alpha_2$ on $\overbar{C}\backslash\{\alpha_2a,\alpha_0\alpha_2 a,\alpha_2 b,\alpha_0\alpha_2 b\}$, and
    \[\overbar{\alpha}_2(\alpha_2 a)=\alpha_0\alpha_2 b, \quad \overbar{\alpha}_2(\alpha_0\alpha_2 a)=\alpha_2 b.\]
   
\end{itemize}
Equivalently, we obtain $M^{[\, a\; b\,]}$ from $M$ by preserving the three involutions  and setting $\alpha_0\alpha_2 b\leftarrow a$ and $\alpha_0\alpha_2 a\leftarrow b$ (so that $\alpha_2 b\leftarrow \alpha_0 a$ and $\alpha_2 a\leftarrow \alpha_0 b$). 
\end{definition}

\begin{remark}\label{lem:duplicate_iso} The map $M^{[\,a\;b\,]}$ given by Definition~\ref{def:edge_gluing} is isomorphic to $M\backslash f$ upon setting $\alpha_0\alpha_2 b\leftarrow a$, $\alpha_2 b\leftarrow\alpha_0 a$, and to $M\backslash e$ upon setting $\alpha_0\alpha_2 a\leftarrow b$, $\alpha_2 a\leftarrow\alpha_0 b$.  
\end{remark}

Definition~\ref{def:edge_gluing} extends to pairs of edges $e,f$ that lie on a common face of larger degree, or that belong to different connected components, but we shall only apply it to pairs of edges that are duplicate: the more  general operation of gluing $e$ and $f$ can be defined as a composition of two vertex gluings (if $e$ and $f$ share no endpoint) or one vertex gluing (if $e$ and $f$ share a single endpoint) and a duplicate edge gluing. 

Edges that are duplicate  in $M$ are incident with a common vertex of degree two in $M^*$. The maximal induced paths in $M^*$ have edge sets equal to the equivalence classes of the relation defined on $M$ by taking the transitive closure of the relation of being duplicate: this equivalence relation refines that of parallel edges in the underlying graph of $M$.   

\begin{lemma}\label{lem:duplicate_glue_del}
If $e,f$ are duplicate edges in $M$, then 
$\sg(M\backslash e)=\sg(M\backslash f)=\sg(M)$.
\end{lemma}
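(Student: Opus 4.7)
The plan is to reduce the claim to Lemma~\ref{l.link_equiv}\ref{en:lem12_i}, which states that $\sg(M\backslash e)=\sg(M)$ precisely when $e$ is a dual link or a bridge. Hence it suffices to show that each of the duplicate edges $e$ and $f$ is a dual link in $M$; the conclusion for $f$ will then follow by symmetry from the argument for $e$.

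To carry this out, I would first fix notation as in Definition~\ref{def:edge_gluing}: let $z$ be the common face of degree two incident with both edges, with associated pair of cycles $(\; a \;\; b\;)\;(\; \alpha_0 a \;\; \alpha_0 b\;)$ where $b=\phi a$, and
\[
e\equiv(\; a\;\; \alpha_0\alpha_2 a\;)\;(\; \alpha_0 a\;\; \alpha_2 a\;),\qquad f\equiv(\; b\;\; \alpha_0\alpha_2 b\;)\;(\; \alpha_0 b\;\; \alpha_2 b\;).
\]
So the crosses incident with $z$ are precisely $\{a,\alpha_0 a, b, \alpha_0 b\}$, of which the first two belong to $e$ and the last two to $f$.

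I would then argue that the cross $\alpha_2 a$ of $e$ lies in a face different from $z$. Since $e$ and $f$ are distinct edges, their cross sets $\{a,\alpha_0 a,\alpha_2 a,\alpha_0\alpha_2 a\}$ and $\{b,\alpha_0 b,\alpha_2 b,\alpha_0\alpha_2 b\}$ are disjoint, ruling out $\alpha_2 a\in\{b,\alpha_0 b\}$. Moreover $\alpha_2 a\neq a$ because $\alpha_2$ is fixed-point-free, and $\alpha_2 a\neq\alpha_0 a$ because $\alpha_0,\alpha_2$ commute and $\alpha_0$ is fixed-point-free (the equality would give $\alpha_0\alpha_2 a=a$). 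Hence $\alpha_2 a$ is not a cross of $z$, so $e$ is incident with at least two distinct faces, i.e., $e$ is a dual link. Swapping the roles of $a$ and $b$ (equivalently, of $e$ and $f$) gives that $f$ is a dual link as well. Applying Lemma~\ref{l.link_equiv}\ref{en:lem12_i} then yields $\sg(M\backslash e)=\sg(M)=\sg(M\backslash f)$.

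The argument is essentially bookkeeping, and the only step requiring any care---and the only real obstacle---is the small verification that $\alpha_2 a$ is not one of the four crosses of $z$; this is also the single place where the hypothesis that $e$ and $f$ are distinct genuinely enters.
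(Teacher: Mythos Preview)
Your proof is correct and follows essentially the same route as the paper: both rest on the observation that duplicate edges are dual links, and then invoke Lemma~\ref{l.link_equiv}\ref{en:lem12_i}. The paper simply asserts ``duplicate edges are dual links'' and, for the equality $\sg(M\backslash e)=\sg(M\backslash f)$, cites the isomorphism $M\backslash e\cong M\backslash f$ from Remark~\ref{lem:duplicate_iso}; you instead verify the dual-link property explicitly and apply it symmetrically to both edges, which is equally valid and arguably cleaner. One small wording point: the step $\alpha_2 a\neq\alpha_0 a$ is really because $\alpha_0\alpha_2$ is fixed-point-free (implicit in the edge being a genuine pair of transpositions), not directly because $\alpha_0$ is.
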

\begin{proof}
The first equality  follows by Remark~\ref{lem:duplicate_iso}, and the second equality by Lemma~\ref{l.link_equiv}~(i) as duplicate edges are dual links. 
\end{proof}

  The following lemma tells us that duplicate edges remain duplicate under vertex gluing  (except possibly when the vertex gluing involves the face bounded by the duplicate edges), and that gluing duplicates can be done before or after a vertex gluing without changing the resulting map. 
\begin{lemma} \label{lem:vertex_edge_gluing_order}
 Let $M\equiv (C,\alpha_0,\alpha_1, \alpha_2)$ be a map containing crosses $a$ and $b=\phi a$  such that $e \equiv (\;\; a \;\;\; \alpha_0\alpha_2 a\;\;)\;(\;\; \alpha_0 a \;\;\; \alpha_2 a\;\;)$ and $f\equiv (\;\; b \;\;\;\alpha_0\alpha_2 b\;\;)\;(\;\;\alpha_0 b \;\;\; \alpha_2 b\;\;)$ are duplicate edges. 

Suppose that 
 $a',b'$ are crosses of $M$ such that  $\{a',b',\alpha_1 a',\alpha_1 b'\}$ is disjoint from $\{a,\alpha_0 a, b,\alpha_0 b\}$. Then $e$ and $f$ are duplicate in $M^{(\, a'\: b'\,)}$ and
\[(M^{[\,a\;b\,]})^{(\,a'\: b'\,)}=(M^{(\, a'\: b'\,)})^{[\,a\;b\,]}.\]
\end{lemma}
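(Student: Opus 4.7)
The plan is to verify both claims of the lemma directly at the level of the cross permutations. First, I would unpack the hypothesis that $e$ and $f$ are duplicate: since the face they share has degree two, its cycle pair in $\phi=\alpha_1\alpha_0$ is $(\;\; a \;\;\; b\;\;)\;(\;\; \alpha_0 b\;\;\;\alpha_0 a\;\;)$, which forces the identities $\alpha_1 a=\alpha_0 b$ and $\alpha_1 b=\alpha_0 a$. In particular, the set $S=\{a,\alpha_0 a,b,\alpha_0 b\}$ is invariant under $\alpha_1$, which is precisely what is needed for $\overbar{\alpha}_1=\alpha_1|_{\overbar{C}}$ in Definition~\ref{def:edge_gluing} to be a well-defined involution on $\overbar{C}=C\setminus S$. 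From the disjointness hypothesis one immediately has $\{a',b',\alpha_1 a',\alpha_1 b'\}\subseteq \overbar{C}$.

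For the first conclusion, I would compute $\widetilde{\phi}=\widetilde{\alpha}_1\alpha_0$ on the four crosses $a,b,\alpha_0 a,\alpha_0 b$. By Definition~\ref{def:merging}, $\widetilde{\alpha}_1$ differs from $\alpha_1$ only on $\{a',b',\alpha_1 a',\alpha_1 b'\}$, and this set is disjoint from $S$ by hypothesis; hence $\widetilde{\alpha}_1$ and $\alpha_1$ agree on $\{a,b,\alpha_0 a,\alpha_0 b\}$, so $\widetilde{\phi}$ and $\phi$ agree there too. The pair of cycles $(\;\; a \;\;\; b\;\;)\,(\;\; \alpha_0 b\;\;\;\alpha_0 a\;\;)$ therefore persists as a face of degree two in $M^{(\,a'\;b'\,)}$, and $e,f$ remain duplicate there.

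For the second conclusion, I would compare $(M^{[\,a\;b\,]})^{(\,a'\;b'\,)}$ and $(M^{(\,a'\;b'\,)})^{[\,a\;b\,]}$ coordinate by coordinate. Both are tuples on the same cross set $\overbar{C}$; both have $\alpha_0$-involution equal to $\alpha_0|_{\overbar{C}}$ (neither operation touches $\alpha_0$); and both have $\alpha_2$-involution equal to the $\overbar{\alpha}_2$ of Definition~\ref{def:edge_gluing}, because riffling leaves $\alpha_2$ unchanged. The substantive check is for $\alpha_1$: on the left we take $\alpha_1|_{\overbar{C}}$ and conjugate by $(\;\; a'\;\;\; b'\;\;)$; on the right we conjugate $\alpha_1$ by $(\;\; a'\;\;\; b'\;\;)$ and then restrict to $\overbar{C}$. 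These two operations commute precisely because the riffling acts nontrivially only on $\{a',b',\alpha_1 a',\alpha_1 b'\}$, a subset of $\overbar{C}$, so crosses in $\overbar{C}$ and in $S$ are not mixed.

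The main obstacle is the case-by-case bookkeeping on $\{a',b',\alpha_1 a',\alpha_1 b'\}$, where both operations modify the involution. At every such cross, both sides follow the same transposition rule from Definition~\ref{def:merging}; elsewhere both agree with $\alpha_1$. Thus the commutation reduces entirely to the disjointness condition $\{a',b',\alpha_1 a',\alpha_1 b'\}\cap S=\emptyset$, and no topological or genus-based argument is required.
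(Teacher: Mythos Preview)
Your proposal is correct and follows essentially the same approach as the paper's proof: both arguments verify the commutation directly at the level of the three involutions, observing that riffling only modifies $\alpha_1$ on $\{a',b',\alpha_1 a',\alpha_1 b'\}\subseteq\overbar{C}$ while duplicate edge gluing restricts to $\overbar{C}$ and modifies $\alpha_2$ on crosses outside $S$, so the two operations act on disjoint data. Your explicit remark that $S=\{a,\alpha_0 a,b,\alpha_0 b\}$ is $\alpha_1$-invariant (because $\alpha_1 a=\alpha_0 b$ and $\alpha_1 b=\alpha_0 a$) is a useful addition, as it makes clear why the restriction $\alpha_1|_{\overbar{C}}$ is a genuine involution and why restricting and conjugating commute.
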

\begin{proof}
By the disjointedness condition on crosses $a',b',\alpha_1 a',\alpha_1 b'$, riffling $a',b'$ fixes $a,b,\alpha_1a=\alpha_0 b$ and $\alpha_1 b=\alpha_0 a$, 
and therefore the face of degree two $(\;\; a \;\;\; b\;\;)\; (\;\; \alpha_0 a \;\;\; \alpha_0 b\;\;)$ is unchanged, i.e. the edges $e$ and $f$ remain duplicate  in $M^{(\, a'\: b'\,)}$.

Riffling $a',b'$ fixes $\alpha_0$ and $\alpha_2$, and only changes $\alpha_1$ on the four crosses $a',b',\alpha_1 a', \alpha_1 b'$, which by asssumption all belong to $\overbar{C}=C\setminus\{a,\alpha_0 a,b,\alpha_0 b\}$. The map $M^{[\, a\; b\,]}$ is obtained by restricting  $\alpha_0, \alpha_1$ to $\overbar{C}$, and restricting $\alpha_2$ to $\overbar{C}$ while changing its values on $\alpha_2 a, \alpha_0\alpha_2 a, \alpha_2 b, \alpha_0\alpha_2 b$ from $a,\alpha_0 a, b,\alpha_0 b$ to $\alpha_0\alpha_2 b, \alpha_2 b,\alpha_0\alpha_2 a, \alpha_2 a$, respectively. 
The involutions $\alpha_0,\alpha_1,\alpha_2$ are thus by riffling/duplicate edge gluing affected as follows: 
\begin{center}
\begin{tabular}{|l|c|c|}
\hline
 & riffling & duplicate edge gluing \\
 \hline
 $\alpha_0$ & fix & restrict to $\overbar{C}$\\
 $\alpha_1$ & change on $\{a',\alpha_1 a', b', \alpha_1 b'\}$ & restrict to $\overbar{C}$\\
 $\alpha_2$ & fix & restrict to $\overbar{C}$ and change on $\{\alpha_2 a, \alpha_0\alpha_2 a, \alpha_2 b, \alpha_0\alpha_2 b\}$.\\
 \hline
\end{tabular}
\end{center}

For each involution, duplicate edge gluing followed by riffling has the same effect as riffling followed by duplicate edge gluing.   (For $\alpha_1$, the condition that the four crosses $a',b',\alpha_1 a', \alpha_1 b'$ all belong to $\overbar{C}=C\setminus\{a,\alpha_0 a,b,\alpha_0 b\}$ is needed so that the restriction to $\overbar{C}$ can be followed by the changes on $\{a',b',\alpha_1 a', \alpha_1 b'\}$.) 
\end{proof}

 In a similar way to how we ``remove parallel edges'' of a graph while maintaining at least one edge in each parallel class, we can glue pairs of duplicate edges in a map iteratively until no further duplicates remain. Duplicate edges $e,f$ of $M$ are interchangeable in the sense that there is a isomorphism from $M\backslash e$ to $M\backslash f$ that fixes all vertices and all edges apart from $e$ or~$f$.


\subsection{Homomorphisms}\label{sec:defining_map_homomorphism}

We have seen how a sequence of vertex gluings followed by a sequence of duplicate edge gluings takes one map onto another map of the same signed genus: the mapping induced on the underlying graphs of the maps is an epimorphism (surjective graph homomorphism). This motivates, in particular, the following definition of an epimorphism between maps.

\begin{definition}\label{def:epimorphism_gluing}
 Let $M$ and $N$ be maps of the same signed genus. 
 
 An \emph{epimorphism} $n:M\twoheadrightarrow N$ from $M$ onto $N$ is an isomorphism of $N$ to a map obtained from $M$ by performing a sequence of vertex gluings  and a sequence of duplicate edge gluings. 

A {\em monomorphism} from $M$ into $N$ is an isomorphism from $M$ to a submap of $N$.

\end{definition}

  In an epimorphism either sequence -- of vertex gluings or of duplicate edge gluings --  may be empty. Notice that Lemmas~\ref{lem:riffle_reorder}, \ref{lem:riffle_reorder_condition} and \ref{lem:vertex_edge_gluing_order} together imply that vertex gluings can be done first, followed by duplicate edge gluings, and the order within each sequence does not matter (that edge gluings commute with each other  easily follows by interpreting it as edge deletion.) 


Before giving our definition of map homomorphism, we record the following proposition, which will be used in Section~\ref{sec:cores}.  

\begin{proposition}\label{prop:epi_mono_iso}
Let $M, M'$ be two maps such that there exist epimorphisms $M\twoheadrightarrow M'$ and $M'\twoheadrightarrow M$ (resp,. monomorphisms $M\rightarrowtail M'$ and $M'\rightarrowtail M$). Then, $M$ and $M'$ are isomorphic.
\end{proposition}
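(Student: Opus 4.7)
The plan is to reduce both cases to a cardinality monotonicity argument in the style of the Schr\"oder--Bernstein theorem for finite structures. The key observation is that every elementary operation used to build an epimorphism or to extract a submap strictly reduces a natural finite invariant of the map, so the existence of a pair of such maps in both directions forces the invariants to coincide, which in turn forces the maps themselves to coincide up to isomorphism.

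For the epimorphism case, I would first prove the following monotonicity lemma: if $n : M \twoheadrightarrow N$ is an epimorphism, then $\vv(N) \le \vv(M)$ and $\ee(N) \le \ee(M)$, with equality in both if and only if $n$ is an isomorphism. This is read off directly from Definition~\ref{def:epimorphism_gluing}. By Definition~\ref{def:vertex_gluing}, a vertex gluing either merges two distinct vertices into one or (in the degenerate case) deletes an isolated vertex; in either subcase $\vv$ strictly decreases by one while the cross set is unchanged, so $\ee$ is fixed. By Definition~\ref{def:edge_gluing} together with Remark~\ref{lem:duplicate_iso}, a duplicate edge gluing removes four crosses, i.e.\ exactly one edge, and leaves $\vv$ fixed (since $\overbar{C} = C \setminus \{a,\alpha_0 a, b, \alpha_0 b\}$ and the vertex cycles of $\tau$ are merely shortened by the removed crosses without being split or merged). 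The final isomorphism preserves both counts, so the inequalities follow, and equality in both forces the numbers of vertex gluings and duplicate edge gluings to be zero, reducing $n$ to an isomorphism. Applying this lemma to both $M \twoheadrightarrow M'$ and $M' \twoheadrightarrow M$ yields $\vv(M)=\vv(M')$ and $\ee(M)=\ee(M')$, and hence that both epimorphisms are isomorphisms.

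The monomorphism case is analogous and arguably simpler. By the definition of a submap, if $N$ is a submap of $M$ then $\vv(N) \le \vv(M)$ and $\ee(N) \le \ee(M)$, with equality in both if and only if $N = M$ (since a submap is obtained by deleting edges and/or vertices, and any genuine deletion strictly decreases one of the two counts). Hence a monomorphism $M \rightarrowtail M'$ yields $\vv(M)\le \vv(M')$ and $\ee(M)\le \ee(M')$, with equality in both precisely when the monomorphism is an isomorphism onto all of $M'$. Applying the inequality in both directions forces equality, and hence both monomorphisms are isomorphisms.

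No genuine obstacle is anticipated; the only point requiring care is the bookkeeping that vertex gluing always decreases $\vv$ by exactly one (including the isolated-vertex subcase at the end of Definition~\ref{def:vertex_gluing}) and that duplicate edge gluing always decreases $\ee$ by exactly one. Once these elementary facts are in hand, the proposition is a finite-cardinality pigeonhole.
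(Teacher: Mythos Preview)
Your proposal is correct and follows essentially the same approach as the paper's proof: both arguments observe that vertex gluings strictly decrease $\vv$ and duplicate edge gluings strictly decrease $\ee$, so epimorphisms in both directions force both sequences to be empty, while monomorphisms in both directions are handled by the finiteness of maps and the corresponding inequalities on $\vv$ and $\ee$. Your version is slightly more explicit in the bookkeeping, but the underlying idea is identical.
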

\begin{proof}
If there are epimorphisms both ways between $M$ and $M'$, then there can be neither vertex gluings (which reduce the number of vertices) nor duplicate edge gluings (which reduce the number of edges). Since $M'$ is thus obtained from $M$ by empty sequences of vertex gluings and duplicate edge gluings, there is an isomorphism between $M'$ and $M$. 

If there are monomorphisms both ways between $M$ and $M'$, there are isomorphisms between $M$ and a submap of $M'$ and between $M'$ and a submap of $M$. 
As the maps are finite there is thus an isomorphism between $M$ and $M'$. 
\end{proof}

\begin{definition} \label{def:maphom_full} Let $M,M'$ be maps of the same signed genus. 
A \emph{map homomorphism} from $M$ to $M'$ is a composition  of an epimorphism  from $M$ onto a submap $N$ of $M'$ and a monomorphism from $N$ into $M'$.
In symbols, a map homomorphism $h:M\to M'$ is the composition $m\circ n$ of an epimorphism 
$n:M\twoheadrightarrow N$ and a monomorphism $m:N\rightarrowtail M'$.  
\end{definition}
  A homomorphism from one map to another gives a graph homomorphism between their underlying graphs: a  map homomorphism can thus be regarded as a graph homomorphism that satisfies further topological constraints specified by an embedding of the graph as a map.

A homomorphism $h:M\to M'$ given by the composition $h=m\circ n$, has {\em image},  denoted by $h(M)$, equal to the image of its constituent epimorphism $n$; i.e., $h(M)=N$. Abusing notation a little further, a function on crosses is associated with $h$, and we let $h(c)$ denote the cross of $M'$ to which cross $c$ of $M$ is sent by $h$. When the map homomorphism $h$ is given solely by a sequence of vertex gluings, as represented by riffles, $h$ fixes each cross (as a map on crosses); it is the involution $\alpha_1$ that is altered. When the map homomorphism $h$ consists solely of a single duplicate edge gluing, then it sends four crosses to four other crosses, as described in Definition~\ref{def:edge_gluing}. Further, the isomorphism defining the monomorphism from $h(M)$ into $M'$ consists of a bijection between crosses of $h(M)$ and a subset of crosses of $M'$.   

Associated with a homomorphism $h:M\to M'$, there is a function from vertices of $M$ to vertices of $M'$: the crosses incident with a given vertex $v$ of $M$ are sent by $h$ to crosses incident with a unique vertex $v'$ in $M'$. 
Likewise, a function is defined by $h:M\to M'$ from edges of $M$ to  edges of $M'$: the four crosses incident with an edge $e$ of $M$ are sent by $h$ to four crosses incident with an edge $e'$ in $M'$. 
These two functions on vertices and on edges define a graph homomorphism between the underlying graphs of $M$ and~$M'$.

\paragraph{\textbf{Restriction of a homomorphism.}} 
Lemmas~\ref{lem:riffle_delete_dual_link_bridge} and~\ref{cor:riffle_spanning_submap} yield a procedure for converting a sequence of vertex gluings applied to $M$ into a sequence of vertex gluings applied to a spanning submap $M\backslash A$ of the same genus and orientability as $M$.

\begin{definition}\label{def:restriction_hom_sequential}
Let $M$ be a map with vertex permutation $\tau$, and $M\backslash A$ a spanning submap of $M$ of the same signed genus. Let
 $h=m\circ n$ be a map homomorphism from $M$ to $M'$. 
The {\em restriction} of $h$ to  $M\backslash A$, denoted by  $h_{|M\backslash A}:M\backslash A\to M'$, is given by restricting $n$ and $m$ as follows:

\,

  Restriction of $n$:  first, modify its sequence of vertex gluings to vertex gluings of $M\backslash A$ by replacing a vertex gluing represented by a riffle of crosses $a,b$ in $M$ incident with vertices $u,v$ by

\begin{itemize}
    \item
 a vertex gluing of $M\backslash A$ represented by a riffle of $\tau^i a,\tau^j b$ in $M\backslash A$ (where $i,j\geq 0$ are minimal with respect to $\tau^i a, \tau^j b$ being crosses of $M\backslash A$, if such $i,j$ exist); 
\item deleting isolated vertex $u$ (if $\tau^i a$ is not a cross of $M\backslash A$ for any $i$, but $\tau^j b$ is a cross of $M\backslash A$ for some $j$),  or isolated vertex $v$ (if $\tau^jb$ is not a cross of $M\backslash A$ for any $j$, but $\tau^i a$ is a cross of $M\backslash A$ for some $i$);  
\item deleting one of the isolated vertices $u$ and $v$ (if $\tau^i a, \tau^jb$ are not crosses of $M\backslash A$ for any $i,j$).
\end{itemize}
Second, modify its sequence of duplicate edge gluings by removing any gluings of duplicate edges, one or both of which belong to~$A$. 

\,

  Restriction of $m$: we restrict the isomorphism that defines $m$ to the image of the submap $M\backslash A$ under the restricted epimorphism~$n$. 
\end{definition}
  The restriction $h_{|M\backslash A}$ involves gluing the same vertex pairs as in the sequence of vertex gluings defining the constituent epimorphism of~$h$ (what may differ are the faces merged or split by the vertex gluings): the graph homomorphism corresponding to $h$ from the underlying graph of $M$ to that of $M'$ is restricted to the underlying graph of $M\backslash A$.   

Having introduced the restriction to a spanning submap $M\backslash A$, we next define the restriction of a homomorphism $h:M\to M'$ to a submap   
$N=(M\backslash A)-U$, where $U$ is a subset of isolated vertices of $M\backslash A$. To obtain $h_{|N}$ we just perform the restriction $h_{|M\backslash A}$ with the modification that in the vertex gluing sequence, when there is a choice between deleting a vertex in $U$ and one outside $U$, the vertex in $U$ is deleted, and after which any remaining vertices in $U$ are deleted. If $N$ has the same signed genus as $M$, then so does $M\backslash A$, so we have the following result. 
\begin{proposition}\label{prop:restriction}
Let $N$ be a submap of a map $M$ of the same signed genus as $M$, and let $h:M\to M'$ be a homomorphism from $M$ to a map $M'$. Then, the restriction $h_{|N}:N\to M'$ is a homomorphism.     
\end{proposition}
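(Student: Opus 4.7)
The plan is to verify that the recipe of Definition~\ref{def:restriction_hom_sequential} produces a valid map homomorphism, meaning an epimorphism onto a submap of $M'$ composed with a monomorphism into $M'$. Write $h=m\circ n$ where $n:M\twoheadrightarrow h(M)$ consists of a sequence of vertex gluings $g_1,\dots,g_s$ followed by a sequence of duplicate edge gluings $d_1,\dots,d_t$, and $m:h(M)\rightarrowtail M'$ is an isomorphism onto a submap of $M'$. Writing $N=(M\backslash A)-U$ with $U$ a set of isolated vertices of $M\backslash A$, the hypothesis $\sg(N)=\sg(M)$ combined with Lemma~\ref{lem:s_submap} gives $\sg(M\backslash A)=\sg(M)$, so by Lemma~\ref{l.link_equiv}(i) every edge of $A$ is either a dual link or a bridge. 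The argument then proceeds in two stages: first we restrict along the edge deletions $A$ to obtain a homomorphism $h_{|M\backslash A}$; then we absorb the deletion of the isolated vertices in $U$.

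For the first stage, I would process the edges of $A$ one at a time. Fix a dual link or bridge $e$ of $M$ and consider a vertex gluing $g_i$ represented by riffling crosses $a_i,b_i$ incident with distinct vertices $u_i,v_i$. If neither $u_i$ nor $v_i$ becomes isolated after deleting $e$, then Lemma~\ref{lem:riffle_delete_dual_link_bridge} (whose general form is Corollary~\ref{cor:riffle_spanning_submap}) supplies crosses $a_i',b_i'$ with $(M^{(a_i\,b_i)})\backslash e=(M\backslash e)^{(a_i'\,b_i')}$, and these crosses are precisely the $\tau^i a_i$, $\tau^j b_i$ of Definition~\ref{def:restriction_hom_sequential} (the rotation is needed exactly when the original crosses belong to $e$). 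If one or both endpoints do become isolated in $M\backslash e$, the gluing degenerates to the deletion of that isolated vertex, as prescribed. Iterating over all edges of $A$, and using Lemmas~\ref{lem:riffle_reorder} and~\ref{lem:riffle_reorder_condition} to reorder riffles when needed, converts the vertex-gluing sequence on $M$ into a valid sequence of vertex gluings of $M\backslash A$. For the duplicate edge gluings, Lemma~\ref{lem:vertex_edge_gluing_order} allows $d_j$ to be commuted past the vertex gluings; a $d_j$ whose two bounding edges both survive in $M\backslash A$ remains a duplicate edge gluing in $M\backslash A$, while a $d_j$ at least one of whose bounding edges lies in $A$ is by Remark~\ref{lem:duplicate_iso} equivalent to the deletion of that edge, which is already absorbed into the deletion of $A$ and may therefore be dropped.

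For the second stage, observe that the isolated vertices in $U$ are never touched by any surviving vertex or duplicate edge gluing (their crosses have disappeared along with the edges of $A$), so deleting them amounts to removing empty pairs of cycles from $\tau$; this preserves the signed genus and is exactly what is prescribed in Definition~\ref{def:restriction_hom_sequential} when a vertex gluing on $M\backslash A$ collapses to the deletion of an isolated vertex. The restricted epimorphism thus takes $N$ onto a submap of the original image $h(M)$, and since $m$ is an isomorphism of $h(M)$ onto a submap of $M'$, its restriction to this sub-image is an isomorphism onto a submap of $M'$, providing the required monomorphism. The main obstacle is the bookkeeping in the first stage: as the vertex-gluing sequence is pushed through multiple edge deletions, the crosses representing each gluing can be replaced repeatedly via the substitution $a\mapsto \tau a$, and one must check that the conditions (i) or (ii) of Definition~\ref{def:vertex_gluing} continue to hold at every intermediate step. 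This is exactly what Lemma~\ref{lem:riffle_delete_dual_link_bridge} guarantees for a single edge deletion, and an induction on $|A|$ extends it to the whole sequence.
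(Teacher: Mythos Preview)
Your proposal is correct and follows essentially the same approach as the paper: the paper's proof is a one-line observation that $\sg(N)=\sg(M)$ forces $\sg(M\backslash A)=\sg(M)$ (via Lemma~\ref{lem:s_submap}), so that the recipe of Definition~\ref{def:restriction_hom_sequential}---whose well-definedness is underwritten by Corollary~\ref{cor:riffle_spanning_submap}---applies. You have simply unpacked that recipe and its justification in more detail, processing the edges of $A$ one by one with Lemma~\ref{lem:riffle_delete_dual_link_bridge} rather than appealing to Corollary~\ref{cor:riffle_spanning_submap} in one shot.
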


 
 In light of the preceeding proposition, one may ask if it is possible to define a restriction $h_{|N}$ for any submap $N$ even if $N$ is a submap with a different signed genus as $M$; this is, in general, not possible, as Figure~\ref{fig:submap_diff_signed_genus} shows; if $N$ is a plane submap, then we are not allowed to follow the sequence of vertex gluings that $h$ demands. This is the reason for the condition that $\sg(N)= \sg(M)$.

\begin{figure}[htb]
\centering
\includegraphics[width=0.55\textwidth]{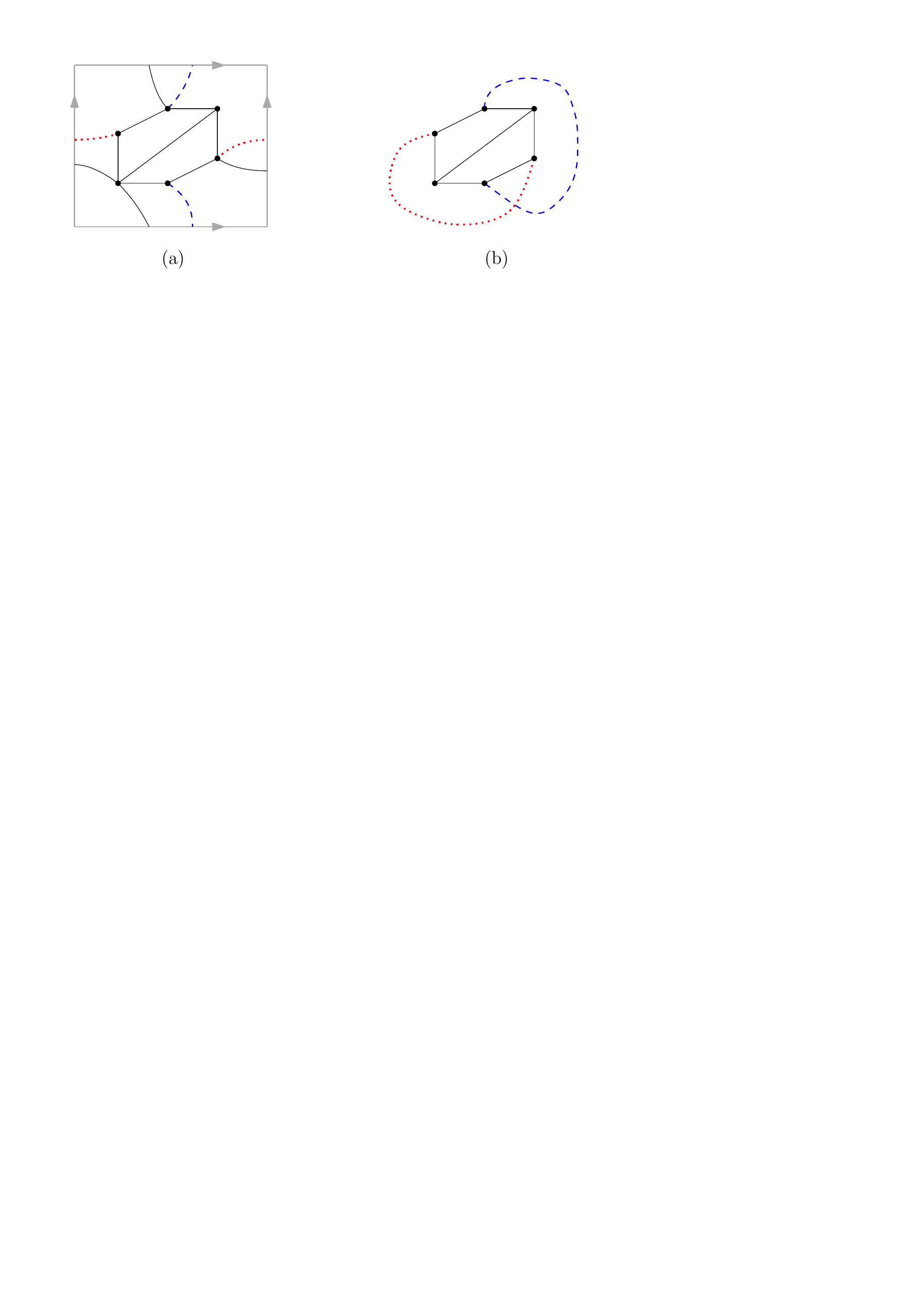}
\caption{\small (a) A map $M$ embedded in the torus, the dotted red and dashed blue lines indicate gluings of the corresponding vertices, (b) a plane submap $N$ of $M$ in which the vertex gluings of (a) cannot be performed.}
\label{fig:submap_diff_signed_genus}
\end{figure}

\paragraph{\textbf{Composition of homomorphisms.}} Let $M,M',M''$ be maps of the same signed genus, and let $h:M\to M'$ and $h':M'\to M''$ be homomorphisms.  The image $h(M)$ is a submap of $M'$ of the same signed genus which implies, by Proposition \ref{prop:restriction}, that the restriction  $h'_{|h(M)}: h(M) \to M''$  is a homomorphism. The composition of $h'_{|h(M)}$ with $h$ defines the {\em composition} of $h$ and $h'$, which is a homomorphism $h'\circ h:M\to M''$.

Decomposing $h=m\circ n$ and  $h'=m'\circ n'$ into their constituent epimorphism and monomorphism, the composition $h'\circ h$ has epimorphism $n'_{|h(M)}\circ n$, whose vertex gluing sequence is the concatenation of the sequence of vertex gluings defining $n$ and those defining the restriction of $n'$ to $h(M)$. The sequence of duplicate edge gluings is 
formed from the concatenation of the sequence of duplicate edge gluings defining $n$ and the  inverse image under $n'_{|h(M)}$ of the edges in the sequence of duplicate edge gluings defining the restriction of $n'$ to $h(M)$, where edges in the inverse image are taken in an arbitrary order. 

Since the composition of map homomorphisms is again a map homomorphism,  the existence of a homomorphism between maps defines a transitive relation; as the identity mapping from $M$ to itself is a homomorphism, this relation defines a quasi-order on maps. In Section~\ref{sec:cores_poset} a partial order is derived from this quasi-order by defining the analogue of graph cores for maps.


\section{Cores}\label{sec:cores}

A core of a graph $\Gamma$ \cite{hell_core_1992} is a  subgraph $\Delta$ with the property that, for any graph $\Gamma'$, there exists a  homomorphism from $\Gamma$ to $\Gamma'$  if and only if there exists a homomorphism from $\Delta$ to $\Gamma'$, and no proper subgraph of $\Delta$ has this property. Equivalently,  $\Delta$ is a core of $\Gamma$ if it is a minimal subgraph of $\Gamma$ (with respect to containment) for which there exists a graph homomorphism from $\Gamma$ to $\Delta$. A core of a graph is, up to isomorphism, unique.

We define cores for maps analogously to how cores are defined for graphs. 

\begin{definition}\label{def.core}
 A submap $N$ of a map $M$ is a \emph{core} of $M$ if there is a homomorphism $M\to N$ but no homomorphism $M\to N'$ for any proper submap $N'$ of $N$.
\end{definition}

 As for graphs, we say that a map is a \emph{core} when it is its own core. 
A core of a map $M$, as a homomorphic image of $M$, has the same signed genus as $M$, and has no duplicate edges.
If a graph is not connected, then its core may also not be connected (consider, for instance, the disjoint union of  $K_3$ with the graph obtained from $K_4$ by twice subdividing each  edge of a perfect matching, thus having girth four, 
which is a core as $K_3$ is not a homomorphic image of the subdivided $K_4$, and both connected components are themselves graph cores); the case is similar for maps (a plane embedding of the previous graph is a map core, as any embedding of a graph core is a map core). For this reason, we focus on the quasi-ordered set of connected maps.

\subsection{Basic properties}

For a map $M$ with core $N$, a homomorphism from $M$ to $N$ must be surjective (otherwise $M$ would be mapped to a strict submap of $N$, contradicting that $N$ is a core).
As for graphs, the core of a map is unique.

\begin{proposition}\label{p.cu}
If $N$ and $N'$ are cores of a map $M$, then $N$ is isomorphic to $N'$.
\end{proposition}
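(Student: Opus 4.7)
My plan is to adapt the classical graph-core uniqueness argument to maps, with Proposition~\ref{prop:epi_mono_iso} as the finishing tool. By the core property there exist homomorphisms $h\colon M \to N$ and $h'\colon M \to N'$. The minimality of the core forces $h(M)=N$: if the image $h(M)$ were a proper submap of $N$, then $h$ (decomposed as epimorphism followed by monomorphism) would provide a homomorphism from $M$ to a proper submap of $N$, contradicting that $N$ is a core of $M$. Likewise $h'(M)=N'$.

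Since $N$ and $N'$ are cores of $M$ they share its signed genus (as noted after Definition~\ref{def.core}), so by Proposition~\ref{prop:restriction} the restrictions $\alpha := h'|_N\colon N \to N'$ and $\beta := h|_{N'}\colon N' \to N$ are well-defined map homomorphisms. The key step is to show $\alpha(N)=N'$. Consider the composition $\alpha \circ h\colon M \to N'$; its image equals $\alpha(h(M))=\alpha(N)$ since $h(M)=N$. If $\alpha(N)$ were a proper submap of $N'$, then $\alpha \circ h$ would yield a homomorphism from $M$ to a proper submap of $N'$, violating the minimality of the core $N'$. Hence $\alpha(N)=N'$, and the symmetric argument, swapping the roles of $N$ and $N'$, gives $\beta(N')=N$.

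Decomposing $\alpha = m_\alpha \circ n_\alpha$ with $n_\alpha\colon N \twoheadrightarrow \alpha(N) = N'$ an epimorphism and $m_\alpha\colon N' \rightarrowtail N'$ the identity monomorphism, we obtain an epimorphism $N \twoheadrightarrow N'$; similarly we obtain an epimorphism $N' \twoheadrightarrow N$. By Proposition~\ref{prop:epi_mono_iso}, $N$ and $N'$ are isomorphic.

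The conceptual step is the surjectivity claim $\alpha(N)=N'$, which is immediate from core minimality; the only technical point to be careful about is ensuring that the restriction $h'|_N$ is a legitimate map homomorphism, which is exactly where the hypothesis $\sg(N)=\sg(M)$ enters (it holds because $N$ is a homomorphic image of $M$, so signed genus is preserved). I do not foresee any genuine obstacle beyond this bookkeeping.
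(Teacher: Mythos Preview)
Your proof is correct and follows essentially the same approach as the paper's: restrict $h'$ to $N$ and $h$ to $N'$ via Proposition~\ref{prop:restriction}, argue surjectivity of each restriction by composing with $h$ (respectively $h'$) and invoking core minimality, then conclude with Proposition~\ref{prop:epi_mono_iso}. The only cosmetic difference is that you first make explicit that $h(M)=N$ before writing $(\alpha\circ h)(M)=\alpha(N)$, whereas the paper states the surjectivity of the restriction directly; the logic is identical.
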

\begin{proof}
We have $\sg(N)=\sg(M)=\sg(N')$, and there are homomorphisms
$h:M\to N$ and $h':M\to N'$. By Proposition \ref{prop:restriction}, 
the restrictions $h_{|N'}: N'\to N$ and $h'_{|N}: N\to N'$ are homomorphisms. In addition, they are surjective since otherwise the compositions $h_{|N'}\circ h'$ and $h'_{|N}\circ h$ would be 
homomorphisms from $M$ to proper submaps of $N$ and $N'$, respectively,  contradicting the fact that $N$ and $N'$ are cores. The result now follows from Proposition~\ref{prop:epi_mono_iso}.
\end{proof}

  In the following proposition we establish that some basic properties of graph cores~\cite{hell_core_1992}  have their counterparts for map cores.

\begin{proposition}\label{p.prop_cores}
The following statements hold  for the core $N$ of a map $M$:
\begin{enumerate}[label=(\roman*)]
\item \label{en.core1}
$N$ is a core (i.e., $N$ is its own core).

\item \label{en.core4}
There is a homomorphism from $M$ to $N$ whose restriction to $N$ is the identity.
\item \label{en.core2} 
$N$ is an induced submap of $M$ after gluing its duplicate edges.
\item \label{en.core3} 
Given another map $M'$ with core $N'$, there is a homomorphism from $M$ to $M'$ if and only if there is a homomorphism from $N$ to  $N'$. 
\end{enumerate}
\end{proposition}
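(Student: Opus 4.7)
The plan is to handle the four parts in order, each using the one before.

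Parts (i) and (ii) are largely formal. For (i), if $N$ admitted a homomorphism to a proper submap $N''$, composition with any homomorphism $M\to N$ would yield $M\to N''$, contradicting the minimality in Definition~\ref{def.core}. For (ii), I would take any homomorphism $h:M\to N$ and form its restriction $h|_N:N\to N$, valid by Proposition~\ref{prop:restriction} since $\sg(N)=\sg(M)$; its image must be all of $N$, for otherwise $h|_N\circ h$ would map $M$ to a proper submap of $N$, contradicting the core property. So $h|_N$ is an epimorphism $N\twoheadrightarrow N$, and the counting argument underlying Proposition~\ref{prop:epi_mono_iso} forces its constituent vertex-gluing and duplicate-edge-gluing sequences to be empty (any gluing strictly decreases $\vv$ or $\ee$). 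Thus $h|_N$ is an automorphism of $N$, and $r:=(h|_N)^{-1}\circ h$ is the desired retraction. Part (iv) then follows by composition: for $(\Leftarrow)$, compose the retraction $M\to N$, the given $N\to N'$, and the monomorphic inclusion $N'\rightarrowtail M'$; for $(\Rightarrow)$, compose the given $M\to M'$ with the retraction $M'\to N'$ obtained from (ii), and restrict the result to the submap $N$ of $M$ via Proposition~\ref{prop:restriction}, using once more that $\sg(N)=\sg(M)$.

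Part (iii) is the heart of the matter. Take the retraction $r$ from (ii), and decompose its epimorphic component into a sequence of vertex gluings followed by a sequence of duplicate edge gluings. The condition $r|_N=\mathrm{id}_N$ forbids any vertex gluing from identifying two vertices of $V(N)$, so step by step every vertex of $V(M)\setminus V(N)$ is either glued to a vertex of $V(N)$ or removed as an isolated vertex; after the vertex-gluing phase the map has vertex set $V(N)$ and edge set $E(M)$, and the subsequent duplicate-edge-gluing phase trims $E(M)$ down to $E(N)$. Invoking Lemma~\ref{lem:vertex_edge_gluing_order} repeatedly to commute duplicate edge gluings past vertex gluings, I would push these edge gluings to the front, expressing $r$ as first a sequence of duplicate edge gluings in $M$ producing some intermediate map $\widetilde M$, and then a sequence of vertex gluings that collapses $V(M)\setminus V(N)$ onto $V(N)$. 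Since no edge of $\widetilde M$ whose endpoints lie in $V(N)$ is touched by this second phase, $N$ is then obtained from $\widetilde M$ simply by deleting the vertices of $V(M)\setminus V(N)$ together with their incident edges, exhibiting $N$ as an induced submap of $\widetilde M$, which is $M$ after gluing certain duplicate edges.

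The main obstacle will lie in part (iii): Lemma~\ref{lem:vertex_edge_gluing_order} carries a disjointness hypothesis on the crosses involved, and the duplicate edge gluings appearing in the retraction may initially pair edges that only become duplicate after some preceding vertex gluings. The key technical step will be to show, by an inductive rearrangement of the gluing sequence, that any such ``virtual'' duplicate edge pair can be traded for a genuine pair of duplicate edges of $M$ (exploiting that the two edges necessarily already share a common face after the partial sequence of gluings, which by the identity-on-$N$ condition can be traced back to a face of $M$), thereby legitimizing the factorization through $\widetilde M$.
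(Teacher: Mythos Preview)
Your treatment of parts (i), (ii) and (iv) is essentially the paper's, and is fine.

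For part (iii), however, your strategy diverges from the paper's and runs into a real obstacle. The paper does \emph{not} attempt to commute the duplicate edge gluings past the vertex gluings. Instead it restricts the retraction $r:M\to N$ of part (ii) to the induced submap $N'$ of $M$ on $V(N)$ (using Proposition~\ref{prop:restriction}, since $\sg(N)\le\sg(N')\le\sg(M)$ and the extremes agree). Because $V(N')=V(N)$ and $r|_N=\mathrm{id}$, the epimorphism $r|_{N'}:N'\twoheadrightarrow N$ has an \emph{empty} vertex-gluing sequence, so it consists solely of duplicate edge gluings in $N'$. That is the statement: $N$ is the induced submap $N'$ with its duplicate edges glued. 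The point, made explicit in the paper, is that these edges are duplicate in $N'$ but \emph{not} necessarily in $M$.

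Your plan to push the edge gluings to the front and realise them as duplicate edge gluings already in $M$ cannot succeed in general, and the paper's own example (Figure~\ref{fig.a7777}) is a counterexample: the map $M$ there has no duplicate edges at all, yet the core $N$ differs from the induced submap on $V(N)$ by one edge which becomes duplicate only after the other vertices are removed. So there is no $\widetilde M$ obtained from $M$ by gluing duplicate edges with $N$ as an induced submap of $\widetilde M$; the ``trade a virtual duplicate pair for a genuine one in $M$'' step you flag as the main obstacle is simply false. The fix is not to repair the commutation argument but to abandon it: restrict to $N'$ first, and the vertex gluings disappear on their own.
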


\begin{proof}
\ref{en.core1} Suppose that there is a homomorphism $h'$ from $N$ to a proper submap $N'$ of $N$. We can compose $h'$ with a homomorphism $h:M\to N$ to obtain a homomorphism $h'\circ h:M\to N'$, which contradicts the fact that $N$ is a core of $M$. Hence, $N$ is a core.

\ref{en.core4}
Let $h:M\to N$, and consider the restriction $h_{|N}:N\to N$ which, by Proposition~\ref{prop:restriction}, is a homomorphism as $\sg(N)=\sg(M)$. In addition, it is  surjective (since $N$ is a core), and so $h_{|N}$ is an isomorphism (by Proposition~\ref{prop:epi_mono_iso} using the identity map as the other epimorphism). Thus, we can compose $h$ with the inverse of $h_{|N}$ (also an isomorphism) to obtain a homomorphism from $M$ to $N$, which is the identity on~$N$.

\ref{en.core2} 
Duplicate edges in any map can be glued to obtain a proper submap as homomorphic image and so $N$ cannot contain duplicate edges.

Let $N'$ be the submap induced in $M$ by the vertices of $N$ (so $N$ is a spanning submap of $N'$). As $\sg(N)=\sg(M)$, by Lemma~\ref{lem:s_submap}, we have $\sg(N')=\sg(M)$. Let $h:M\to N$ be a homomorphism that is the identity when restricted to $N$, which we may assume by~\ref{en.core4}. Consider the restriction $h_{|N'}:N'\to N$, which is an epimorphism by Proposition~\ref{prop:restriction} and the facts that $N$ is a core of $M$ and  $h$ is the identity on $N$. Definition~\ref{def:epimorphism_gluing} 
then establishes that $N$ is isomorphic to a map obtained from $N'$ after a sequence of vertex gluings followed by a sequence of duplicate edge gluings. Since $V(N)=V(N')$,  the sequence of vertex gluings 
is empty, leaving only a sequence of duplicate edge gluings. Thus $N$ is the induced submap $N'$ with duplicate edges glued until none remain; see Figure \ref{fig.a7777} for an example.  
Note that the glued edges are duplicated in $N'$, but not in $M$.

\ref{en.core3}  Let $h: M \to N$ be a homomorphism from $M$ to its core $N$. Suppose first that there is a homomorphism $k: N \to N'$. Then the composition $k\circ h:M\to N'$ is a homomorphism, 
and $N'$ is a submap of $M'$ with $\sg(M')=\sg(N')$. Hence $k\circ h:M\to M'$ is a homomorphism. 


Suppose conversely that there is a homomorphism $\ell:M\to M'$, and let $h': M'\to N'$ be a homomorphism from $M'$ to its core $N'$. Since $N$ is a submap of $M$ with its same signed genus, there is the monomorphism $\iota:N\to M$ just seeing $N$ as a submap in $M$; then the composition $h'\circ \ell\circ\iota$ is a homomorphism between $N$ and $N'$ as all the maps have the same signed genus.
\end{proof}

\begin{figure}[htb]
\centering
\includegraphics[width=0.5\textwidth]{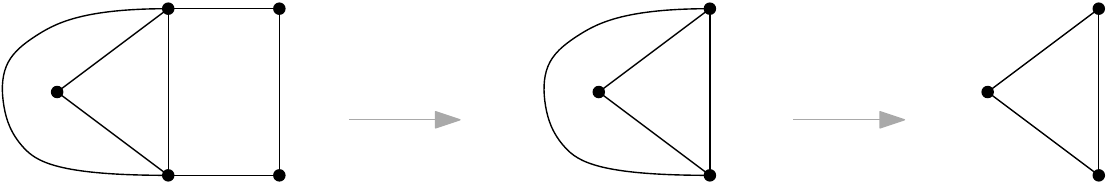}
\caption{\small A map (left) whose core is a triangle, which is not an induced submap: the duplicate edge of the submap induced on the three vertices of the triangle needs to be removed. (Notice that this edge becomes duplicate only after removing the rightmost vertices.)}
\label{fig.a7777}
\end{figure}  

 The statement of Proposition~\ref{p.prop_cores}\ref{en.core4} holds for any submap $N$ of $M$ with the same signed genus as $M$ (not necessarily a core), as we establish in the following lemma. This fact will be useful in Section \ref{sec:circuits-hom} as it ensures that there is an endomorphism of $M$ with image $N$ if and only if there is such an endomorphism fixing $N$.

\begin{lemma} \label{rem:ext_to_sub}
 Let $M$ be a map, and let $N$ be a submap of $M$ of the same signed genus. If there is an epimorphism $n:M\twoheadrightarrow N$,  then there is an epimorphism from $M$ onto $N$ whose restriction to $N$ is the identity. 
\end{lemma}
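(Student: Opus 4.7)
My plan is to adapt the argument of Proposition~\ref{p.prop_cores}\ref{en.core4}. First, I would form the restriction $n_{|N}: N\to N$, which by Proposition~\ref{prop:restriction} is a homomorphism since $\sg(N)=\sg(M)$. In the ideal case that $n_{|N}$ is an epimorphism, Proposition~\ref{prop:epi_mono_iso} applied together with the identity $\mathrm{id}_N:N\twoheadrightarrow N$ would make $n_{|N}$ an automorphism of $N$, and the composition $(n_{|N})^{-1}\circ n:M\twoheadrightarrow N$ would be the required epimorphism fixing $N$, since its restriction to $N$ would be $(n_{|N})^{-1}\circ n_{|N}=\mathrm{id}_N$.

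The step I expect to be the main obstacle is establishing surjectivity of $n_{|N}$. Unlike in Proposition~\ref{p.prop_cores}\ref{en.core4}, we cannot invoke a core property to force this; indeed, $n_{|N}$ can genuinely fail to be surjective. For example if $M$ is four isolated vertices $v_1,v_2,v_3,v_4$ and $N$ is the submap on $\{v_1,v_2\}$, there is an epimorphism that collapses $\{v_1,v_2,v_3\}$ to~$v_1\in V(N)$ and maps $v_4$ to~$v_2$, and its restriction to $N$ sends both $v_1$ and $v_2$ to~$v_1$.

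To surmount this, I would not work with $n_{|N}$ directly but instead construct a different epimorphism $n'$ realising the desired property. Decomposing the given $n=\beta\circ g$ into a sequence $g$ of vertex and duplicate edge gluings followed by an isomorphism $\beta$ induces a partition of $V(M)$ into classes $\{P_w\}_{w\in V(N)}$, where $P_w=n^{-1}(w)$. I would refine this to a new partition $\{P'_v\}_{v\in V(N)}$ by placing each $v\in V(N)$ in its own class together with those vertices of $V(M)\setminus V(N)$ that $n$ sends to $v$; each new class now contains exactly one vertex of $V(N)$. Using the reordering and commutation properties of vertex and duplicate edge gluings (Lemmas~\ref{lem:riffle_reorder}, \ref{lem:riffle_reorder_condition}, \ref{lem:vertex_edge_gluing_order}), I would realise this refined partition as a valid sequence of gluings on~$M$, choosing the terminal isomorphism to be the identity on~$V(N)$.

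The delicate point is verifying feasibility of each gluing step, namely the same-face-or-different-component requirement of Definition~\ref{def:vertex_gluing}. Here the original gluings of~$n$ serve as witnesses that the vertices in each class can be successively brought together: one would process the gluings of $n$ in order, and whenever the next gluing would merge a vertex $u\notin V(N)$ eventually into the class of some $v\in V(N)$, reroute it so that $u$ is glued directly into the chain leading to~$v$ instead of possibly passing through other $V(N)$-vertices. Corollary~\ref{cor:riffle_deletion_dual_link_bridge} and Lemma~\ref{lem:riffle_delete_dual_link_bridge} ensure that intermediate adjustments coming from deletions or duplicate-edge gluings still produce valid riffles, completing the construction of the required~$n'$.
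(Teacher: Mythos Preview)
Your diagnosis of the direct-restriction approach is correct, and constructing a modified epimorphism is the right move. The specific partition you propose, however, does not work. Take $M$ to be the plane $4$-cycle on $v_1,v_2,v_3,v_4$ and $N$ the edge $v_2v_3$. An epimorphism $n$ may glue $\{v_1,v_3\}$ and $\{v_2,v_4\}$, collapse the resulting parallel edges, and then compose with the isomorphism of $N$ swapping its endpoints; this gives $n(v_1)=v_2$ and $n(v_4)=v_3$, so your rule yields $P'_{v_2}=\{v_1,v_2\}$ and $P'_{v_3}=\{v_3,v_4\}$. Gluing these adjacent pairs turns the edges $v_1v_2$ and $v_3v_4$ into loops at distinct vertices, and loops cannot be removed by duplicate-edge gluing, so the result never reduces to $N$. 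The general defect is that an edge of $E(M)\setminus E(N)$ with one endpoint $v\in V(N)$ satisfying $n(v)\neq v$ ends up, under $P'$, joining $v$ to $n(\text{other endpoint})$ rather than $n(v)$ to $n(\text{other endpoint})$; $N$ need not contain such an edge. Your rerouting clause only reassigns vertices of $V(M)\setminus V(N)$ and says nothing about gluings of $n$ that merge two $V(N)$-vertices, so it does not address this; nor does it explain why the same-face feasibility condition survives for later steps once such a gluing is skipped.

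The paper does not try to repair $n$. It passes through the core $N_0$ of $N$, which (via $M\twoheadrightarrow N\twoheadrightarrow N_0$) is also the core of $M$, so Proposition~\ref{p.prop_cores}\ref{en.core4} supplies an epimorphism $n_0:M\twoheadrightarrow N_0$ that is the identity on $N_0$. Because $n_0$ never glues two $V(N_0)$-vertices, one can view its gluing sequence in reverse and drop, one at a time, the last gluing that removes a vertex of $V(N)\setminus V(N_0)$ (together with any edge gluings thereby rendered non-duplicate); this yields identity-preserving epimorphisms onto an increasing chain $N_0\subsetneq N_1\subsetneq\cdots\subsetneq N_k$ with $V(N_k)=V(N)$, after which undoing any remaining duplicate-edge gluings internal to $N$ reaches $N$ itself. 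The passage through the core is precisely what furnishes the control that direct modification of $n$ lacks.
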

\begin{proof}
If $N$ is the core of $M$ and there is an epimorphism $n:M\twoheadrightarrow  N$, the result follows analogously to Proposition~\ref{p.prop_cores}\ref{en.core4}. Suppose then that $N$ is not a core, and let $N_0\subset N$ be the core of $N$, and so the core of $M$ (the epimorphism $n:M\twoheadrightarrow  N$ can be composed with an epimorphism $N\twoheadrightarrow N_0$). By Proposition~\ref{p.prop_cores}\ref{en.core4}, there exists an epimorphism $n_0:M\twoheadrightarrow N_0$ whose restriction to $N_0$ is the identity.
This epimorphism is defined by a sequence of vertex gluings that gives a map $N_0\cup \{e_1, \ldots, e_s\}$, where the $e_i$'s are duplicate edges that must be glued to obtain $N_0$. Each time that a vertex gluing is performed, we identify two distinct vertices generating a map that has one less vertex. In this identification we always try to keep the vertices in $N_0$ or in $N\setminus N_0$; note that the two identified vertices cannot both belong to $V(N_0)$ as $n_0$ 
is the identity on~$N_0$. The sequence of vertex gluings and the 
duplicate edge gluings can be viewed in the reverse order: we construct a map $N_1$ from $N_0$ by not performing the last vertex gluing of the sequence that removes a vertex from $N\setminus V(N_0)$ and not gluing those edges $e_i$ that are no longer duplicate with another edge in $N_1$.  This creates an epimorphism $n_1:M\twoheadrightarrow N_1$ whose restriction to $N_1$ is the identity.

Iterating this procedure, we construct a sequence of maps $N_0\subset N_1 \subset \ldots \subset N_k\subseteq N$, where $N_i$ is obtained from $N_{i-1}$ by not performing the last vertex gluing (in the sequence to obtain $N_{i-1}$) that removes a vertex from $V(N)\setminus V(N_{i-1})$, and by not gluing those edges $e_i$ that are no longer duplicate in $N_{i}$. We have $V(N_k)=V(N)$; then $N$ is obtained from $N_k$ by not gluing duplicate edges of $N$ (which may have been glued to make $N_k$ as they had duplicates). Associated with each $N_i$ there is an epimorphism $n_i:M\twoheadrightarrow N_i$ whose restriction to $N_i$ is the identity. Thus, the desired epimorphism from $M$ onto $N$ whose restriction to $N$ is the identity is easily obtained from $n_k:M\twoheadrightarrow N_k$ when go from $N_k$ to $N$.
\end{proof}

\subsection{Separating and contractible  cross-circuits
} \label{sec:cutting}

The aim of this section is to translate the topological notions of a simple (i.e. non-self-intersecting) closed curve, a separating curve and a contractible curve (or at least contractible curves of a certain type) into the language of cross permutations defining a map.

In topology, a closed curve in a surface is said to be \emph{contractible} if it can be homotopically mapped to a point; 
equivalently, the curve is contained within a region of the surface homeomorphic to an open disc. 
There have been several translations of this  notion into the context of maps, where closed curves are described by walks, which are defined combinatorially rather than topologically.

Viewing a map $M$ as an embedding of a graph $\Gamma$ into a surface $\Sigma$, Mohar and Thomassen \cite{mohar01} define a cycle of $\Gamma$ (a simple circuit, entering and leaving each vertex at most once) 
to be contractible in $M$ if the closed curve that it defines in $\Sigma$ 
is contractible.
One shortcoming of this definition for our purposes is that it only applies to cycles in $\Gamma$ and not to closed walks of $\Gamma$ that revisit vertices or edges.  This shortcoming is circumvented by the definition offered by Cabello and Mohar \cite{CM07}, who define a closed walk of $\Gamma$ 
to be contractible in $M$ if the closed curve that it defines in $\Sigma$ 
can be $\epsilon$-perturbed into a contractible closed curve in $\Sigma$.  Such a perturbation ensures that there are only finitely many self-intersections, and edges that are traversed more than once or vertices that are visited more than once by the walk are represented by disjoint curves or points and are thus distinguished from each other.
Problematic from our perspective is that both these definitions of contractibility rely on topological properties of curves in $\Sigma$ determined by closed walks of $\Gamma$ rather than properties of the map $M$ as a combinatorial object. 

In order to remain within the realm of maps as defined by cross involutions, we begin by introducing an analogous object to a {\em circuit} in a graph; in a graph a circuit is a closed walk that repeats no edges, in a map it is a closed walk that repeats no crosses (Definition~\ref{def:closed_walk}). In this way, we extend the scope of the term contractibility from the graph cycles of Mohar and Thomassen to cross-circuits; facial walks are then all contractible, even if they traverse an edge more than once, corresponding to the defining property of faces as enclosing topological discs.

\begin{definition}\label{def:closed_walk}
Let $M\equiv (C,\alpha_0,\alpha_1,\alpha_2)$ be a map with vertex permutation $\tau$ and face permutation~$\phi$. 
A {\em cross-circuit} $\kappa$ of $M$ is a pair of cycles,  
\[\kappa=(\;\; c_0 \;\;\; c_1 \;\;\; \cdots \;\;\; c_{\ell-1}\;\;)
\;(\;\; \alpha_0c_{\ell-1} \;\;\; \cdots \;\;\; \alpha_0 c_1 \;\;\; \alpha_0 c_0\;\;),\]
 with the property that the $2\ell$ crosses among $\{c_i\}\cup \{\alpha_0 c_i\}$ are pairwise distinct and for each $i$ there is a $j$ such that  $c_i=\tau^j\phi c_{i-1}$ (indices $i$ taken modulo $\ell$).\end{definition}

  The two cycles of the cross-circuit $\kappa$ represent its traversal  in opposite directions.  Its length is denoted by $\ell(\kappa)$. A facial walk traversing face $z$ is represented by the cross-circuit~$\phi_z$.

Recall that a closed walk in a graph is a sequence of edges which joins a sequence of vertices, starting and ending at the same vertex; for a circuit, edges are not allowed to be repeated. For orientable maps, we can find a combinatorial representation using half-edges (considering the set of half-edges to be the set of crosses in the equivalence classes modulo $\langle \alpha_0\alpha_2,\phi \rangle$ thus having two half-edges per each edge in the graph, instead of four crosses). Thus, in a cross-circuit along an orientable map, we would traverse each half-edge at most once, so each edge at most twice, once in each direction. If a map is non-orientable, we add an extra pair of half-edges per edge in the combinatorial representation to make a total of four crosses; then a cross-circuit only uses each cross from $\{c,\alpha_0c\}$ at most once, so each edge is used at most twice.

The choice of cross for a vertex-edge incidence reflects, not only the direction of travel (we go from $c$ ``to $\alpha_0 c$'' and then ``to $\alpha_1\alpha_0c$''), but also which ``side'' of the edge the walk is on, $c$ being on one side and $\alpha_2 c$ on the other. For instance, which direction a loop is traversed is determined by which of $c, \alpha_2 c$ or $\alpha_0 c$, $\alpha_0\alpha_2 c$ is chosen to represent the vertex-edge incidence, and hence the side of the edge as well.
A closed walk in a map is thus given not only by the sequence of vertices and edges, but by the side along which the edges are traversed -- this is determined by how the face permutation $\phi$ instructs the walk to take a step along an edge, which explains its role in Definition~\ref{def:closed_walk}.

A cross-circuit $(\;\; c_0 \;\;\; c_1 \;\;\; \cdots \;\;\; c_{\ell-1}\;\;)\;(\;\;\alpha_0c_{\ell-1} \;\;\; \cdots \;\;\; \alpha_0 c_1 \;\;\; \alpha_0 c_0\;\;)$ is said to be \emph{non-self-intersecting} 
if there are no pair of distinct indices $(i,j)$ such that the pairs of crosses $\alpha_0 c_i,c_{i+1}$ and $\alpha_0 c_j,c_{j+1}$ interlace in a cycle 
$(\;\; c \;\;\;  \alpha_1 c \;\;\; \tau c \;\;\; \tau\alpha_1 c\cdots \;\;\; \tau^{-1} c \;\;\; \tau^{-1} \alpha_1c \;\;)$
consisting of all the crosses incident with a given vertex. 

We next present a procedure of cutting a map around a non-self-intersecting cross-circuit, which leans on two operations that are first defined: edge splitting and vertex splitting.

\begin{definition}\label{def:edge_splitting}
Let $e\equiv(\;\; c \;\;\; \alpha_0\alpha_2 c\;\;)\;(\;\;\alpha_0 c \;\;\; \alpha_2 c\;\;)$ be an edge of a map $M\equiv (C,\alpha_0,\alpha_1,\alpha_2)$. The map $M'\equiv(C',\alpha_0',\alpha_1',\alpha_2')$ obtained from $M$ by {\em splitting edge $e$} into two distinct edges
bounding a new face of degree two 
is defined as follows:
\begin{itemize}
\item $C'=C\sqcup\{\alpha_2'c$, $\alpha_2'\alpha_0 c$, $\alpha_2'\alpha_2c,\alpha_2'\alpha_0\alpha_2 c\}$;
\item 
$\alpha_0'=\alpha_0$ on $C$, and
\begin{align}
\alpha_0'\,(\alpha_2'c)&=\alpha_2'\alpha_0 c\:, & \alpha_0'\, (\alpha_2'\alpha_0c) &= \alpha_2'c\: , \nonumber \\
\alpha_0'\,(\alpha_2'\alpha_0\alpha_2 c)&=\alpha_2'\alpha_2c\: , & \alpha_0'\,(\alpha_2'\alpha_2c)&=\alpha_2'\alpha_0\alpha_2 c \: ; \nonumber
\end{align}

\item 
$\alpha_2'=\alpha_2$ on $C\setminus\{c, \alpha_0 c, \alpha_2 c, \alpha_0\alpha_2 c\}$, and
\begin{align}
\alpha_2'\, (\alpha_2' c)&=c\: , & \alpha_2'\,(\alpha_2'\alpha_0 c)&=\alpha_0c\: , \nonumber \\
\alpha_2'\,(\alpha_2'\alpha_0\alpha_2 c)&=\alpha_0\alpha_2 c\: , & \alpha_2'\,(\alpha_2'\alpha_2c)&=\alpha_2 c\: ; \nonumber 
\end{align}
\item 
$\alpha_1'=\alpha_1$ on $C$, and
\begin{align}
\alpha_1'\, (\alpha_2'c)&= \alpha_2'\alpha_2 c\: ,  &\alpha_1'\, (\alpha_2'\alpha_2 c)&= \alpha_2'c\: , \nonumber \\
\alpha_1'\,(\alpha_2'\alpha_0c)&=\alpha_2'\alpha_0\alpha_2 c\: , &\alpha_1'\,(\alpha_2'\alpha_0\alpha_2 c)&=\alpha_2'\alpha_0c \: .\nonumber
\end{align}
\end{itemize}
\end{definition}

  Edge splitting is illustrated in Figure \ref{edge_vertex_splitting}(a), where it can be easily seen that this operation
is the inverse to edge gluing (setting $a=\alpha_2'c$ and $b=\alpha_2'\, (\alpha_0\alpha_2 c)$ in Definition~\ref{def:edge_gluing}). 

\begin{definition}\label{def:vertex_splitting}
Let $a,b$ be a pair of distinct crosses that are  coincident with a vertex $v$ of a map $M\equiv(C,\alpha_0,\alpha_1,\alpha_2)$. The map $M^{(a\;b)}$ obtained by riffling $a,b$ is the result of  \emph{splitting} the vertex $v$ through $a,b$.  
\end{definition}

  Vertex splitting is depicted in Figure~\ref{edge_vertex_splitting}(b). This operation is inverse to vertex gluing when the pair of crosses $a,b$ satisfy condition~\ref{en:riff_signedgenus_2} of Lemma~\ref{lem:riff_signedgenus} (see Figure~\ref{fig:vertexgluing}).
Tutte~\cite[Ch. X]{tutte01} defines a related operation of vertex splitting, which follows the operation in Definition~\ref{def:vertex_splitting} by  the insertion of a link joining the two vertices produced by splitting in our sense, in a way that preserves genus an orientability (exactly how this edge is inserted is described in the proof of Lemma~\ref{lem:riff_signedgenus} above).
In the proof of Lemma~\ref{lem:core_connection} the reader can find how Tutte's vertex splitting operation can be recovered using Definition~\ref{def:vertex_splitting}, vertex-gluing, duplicate edge-gluing, and edge addition.

\begin{figure}[htb]
\centering
\includegraphics[width=0.7\textwidth]{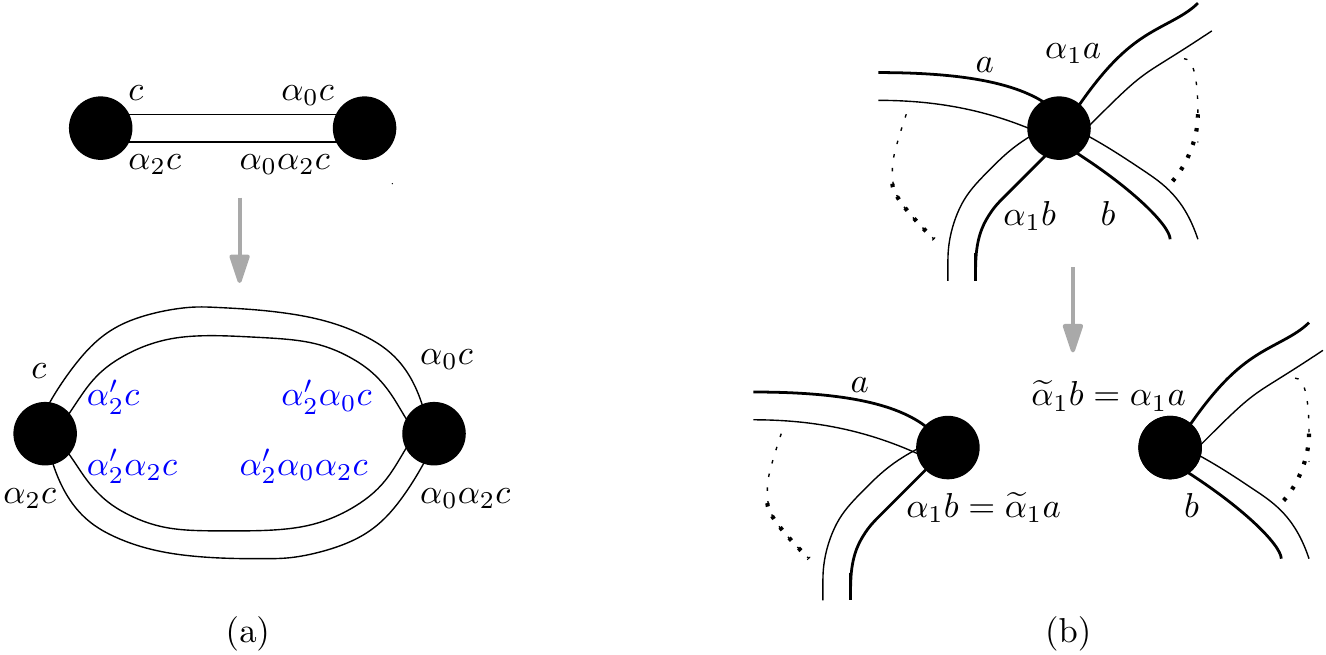}
\caption{\small (a) Edge splitting: the new crosses are labeled in blue, (b) Splitting  a vertex of a map $M$ through crosses $a, b$ to make the map $M^{(a\;b)}$.}
\label{edge_vertex_splitting}
\end{figure}  

Informally, the procedure of cutting around a non-self-intersecting cross-circuit $\kappa$, which we define next, is performed as follows. While traversing $\kappa$ we split each edge as we pass through it (in total, we split each edge as many times as it is traversed). Then, crosses of the split edges are associated with new faces of degree two (generated by the edge splitting). The property of non-self-intersecting allows us to merge these new degree-two faces when splitting vertices: as we proceed, these faces merge together to make a face of larger even degree until the last step, when the circuit closes, and the merged face is joined back on itself and the last vertex-splitting produces two new faces $x$ and $y$. If this last vertex-splitting disconnects the map, $x$ and $y$ are in different connected components (in which case the last vertex splitting, as all the previous ones, is inverse to vertex gluing, and preserves genus and orientability). Otherwise, the two vertices produced by this last splitting are not incident with a common face (neither do they belong to different connected components); one is incident with face $x$ and the other with face $y$.

\begin{definition} \label{def:cutting}
Let $M\equiv(C,\alpha_0,\alpha_1,\alpha_2)$ be a map, and let  \[\kappa=(\;\; c_0 \;\;\; c_1 \;\;\; \cdots \;\;\; c_{\ell-1}\;\;)\;\;(\;\;\alpha_0c_{\ell-1} \;\;\; \cdots \;\;\; \alpha_0 c_1 \;\;\; \alpha_0 c_0\;\;)\] be a non-self-intersecting cross-circuit of $M$. 
The map $M\ltimes \kappa$ obtained from $M$ by \emph{cutting around} $\kappa$  is the result of: (i) splitting the edges of $M$ containing $c_0,c_1,\ldots,c_{\ell-1}$ in turn, producing a map $M'\equiv (C',\alpha_0',\alpha_1',\alpha_2')$, (ii) applying the sequence of vertex-splittings in $M'$ realized by riffling crosses 
$\alpha_2'\alpha_0 c_i$ and $\alpha_1'\alpha_2'c_{i+1}$ for $i=0,\dots, \ell-1$ 
(indices modulo $\ell$). 
\end{definition}

Note that $M\ltimes \kappa$ has the same set of crosses as $M'$ in Definition~\ref{def:cutting}.   Figures~\ref{fig:prefacial_cross_circuit} and~\ref{fig:prefacial_cross_circuit(2)} illustrate examples of the procedure of cutting around a  non-self-intersecting cross-circuit $\kappa$. The new facial walks produced by cutting around  $\kappa$ are next recorded explicitly.

\begin{lemma}\label{lem:facial_walks_cut}
Let $M\equiv(C,\alpha_0,\alpha_1,\alpha_2)$ be a connected map, and let \[\kappa=(\;\;c_0 \;\;\; c_1 \;\;\; \cdots \;\;\; c_{\ell-1}\;\;)\;(\;\; \alpha_0c_{\ell-1} \;\;\; \cdots \;\;\; \alpha_0 c_1 \;\;\; \alpha_0 c_0\;\;)\]
 be a non-self-intersecting cross-circuit of $M$. If  $M'\equiv(C',\alpha_0',\alpha_1',\alpha_2')$ is obtained from $M$ by splitting the edges containing $c_0, c_1, \dots, c_{\ell-1}$ in turn as in Definition~\ref{def:cutting},  then the two facial walks of $M\ltimes \kappa$ that are not facial walks of $M$ are
 \[
\begin{cases}
    \kappa_{\ltimes}=(\;\; \alpha_2'c_{0} \;\;\; \alpha_2' c_{1}\;\;\; \dots \;\;\; \alpha_2'c_{\ell-1}\;\;)\;(\;\; \alpha_0'\alpha_2' c_{\ell-1} \;\;\;  \dots \;\;\; \alpha_0'\alpha_2'  c_1 \;\;\; \alpha_0'\alpha_2'c_{0} \;\;), \\
    \kappa^{\ltimes}=(\;\; \alpha_1'\alpha_2'c_{0} \;\;\; \alpha_1'\alpha_2' c_{1}\;\;\; \dots \;\;\; \alpha_1'\alpha_2'c_{\ell-1}\;\;)\;(\;\;\alpha_0'\alpha_1'\alpha_2'  c_{\ell-1} \;\;\;  \dots \;\;\; \alpha_0'\alpha_1'\alpha_2'  c_1 \;\;\; \alpha_0'\alpha_1'\alpha_2'  c_{0} \;\;).
\end{cases}\]

\end{lemma}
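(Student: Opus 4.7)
The plan is to track the face permutation $\phi = \alpha_1 \alpha_0$ through both phases of the cutting operation and verify directly that the two new facial walks are as claimed. First, after splitting the edges containing $c_0,c_1,\dots,c_{\ell-1}$ in turn to obtain $M'$, I would use Definition~\ref{def:edge_splitting} to verify that each split introduces a new degree-two face whose four crosses are $\alpha_2'c_i$, $\alpha_2'\alpha_0 c_i$, $\alpha_1'\alpha_2'c_i (=\alpha_2'\alpha_2c_i)$ and $\alpha_0'\alpha_1'\alpha_2'c_i (=\alpha_2'\alpha_0\alpha_2c_i)$; the facial walks of $M$ touching $c_i$ survive in $M'$ with $c_i$, $\alpha_0c_i$ replaced by $\alpha_2'c_i$, $\alpha_0'\alpha_2'\alpha_0 c_i$ respectively. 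The $2\ell$ crosses being distinct (by the cross-circuit hypothesis) ensures that the $\ell$ edge splits do not interact.

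Next, for the vertex-splitting phase, I would examine the riffle of $a_i := \alpha_2'\alpha_0 c_i$ with $b_i := \alpha_1'\alpha_2'c_{i+1}$. Because $\kappa$ is a cross-circuit, there is $j_i \geq 0$ with $c_{i+1}=\tau^{j_i}\phi\, c_i$, and a direct computation from the definitions of edge splitting shows that $a_i$ and $b_i$ are coincident with a common vertex of $M'$ (the vertex of $M'$ inherited from the one traversed between the $i$-th and $(i+1)$-th step of $\kappa$). Thus each such riffle is a vertex-splitting in the sense of Definition~\ref{def:vertex_splitting}. The hypothesis that $\kappa$ is non-self-intersecting guarantees that at each stage $a_i$ and $b_i$ lie on distinct faces, so by Lemma~\ref{lem:riff_signedgenus}\ref{en:riff_signedgenus_2}(i) (applied in reverse as vertex-splitting) the riffle \emph{merges} these two faces rather than splitting one; concretely, $a_i$ lies on the degree-two face created by the $i$-th edge split, while $b_i$ lies on the face being assembled from degree-two faces produced by earlier splits.

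The core of the argument is then an induction on $s$, $0 \leq s \leq \ell-2$, showing that after the first $s+1$ riffles the degree-two faces at indices $0,1,\dots,s$ have been amalgamated into a single face whose facial cycle contains the partial arcs
\[(\;\;\alpha_2'c_0\;\;\;\alpha_2'c_1\;\;\;\cdots\;\;\;\alpha_2'c_s\;\;) \quad\text{and}\quad (\;\;\alpha_1'\alpha_2'c_0\;\;\;\cdots\;\;\;\alpha_1'\alpha_2'c_s\;\;)\]
together with their $\alpha_0'$-conjugate reverses, still joined to the residual portion of the merged face at $a_s$ and $b_s$. The inductive step is precisely an instance of the merging described above. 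At the closing step $s=\ell-1$ the roles reverse: the crosses $a_{\ell-1}$ and $b_{\ell-1}$ now lie on a common face (the long merged one) and the final riffle \emph{splits} that face into exactly two faces; by tracking the two arcs assembled during the induction, these two faces are seen to have facial cycles $\kappa_{\ltimes}$ and $\kappa^{\ltimes}$ as claimed, while the facial walks of $M$ that did not meet $\kappa$ remain unchanged.

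The main obstacle is the bookkeeping of how each riffle alters the face permutation relative to the partial arcs built so far, and in particular verifying that non-self-intersection of $\kappa$ consistently forces face-merging (not face-splitting) at the intermediate steps $s<\ell-1$, while the closing step $s=\ell-1$ instead splits the merged face exactly along the two assembled arcs. Once this is handled carefully, identifying the resulting cycles with the explicit sequences $\kappa_{\ltimes}$ and $\kappa^{\ltimes}$ given in the statement is a routine unravelling of the definitions of $\alpha_0'$, $\alpha_1'$, $\alpha_2'$ on the newly introduced crosses.
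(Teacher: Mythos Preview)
Your proposal is correct and follows essentially the same approach as the paper's proof: track the face permutation through the edge-splitting phase (producing $\ell$ degree-two faces) and then through the sequence of vertex-splittings, noting that each intermediate riffle merges the new degree-two face into the growing face while the final riffle splits that face into $\kappa_\ltimes$ and $\kappa^\ltimes$. The paper's proof is a terse two-sentence sketch referring to Figure~\ref{edge_vertex_splitting}, whereas you have written out the induction and the role of non-self-intersection explicitly; the underlying argument is the same.
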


\begin{proof}
This follows by recording the effect of splitting edges to form the map $M'$, and  splitting vertices visited by $\kappa$. 
If the vertex being split is not the last one, the two faces incident with the two riffled crosses are different; for the last vertex we have crosses coincident with a common vertex and with a common face. See Figure~\ref{edge_vertex_splitting}. Vertex splitting after edge splitting means that we slice between $\alpha_2'c$ and  $\alpha_2'\alpha_2c$, and between $\alpha_2'\alpha_0c$ and  $\alpha_2'\alpha_0\alpha_2 c$ in Figure~\ref{edge_vertex_splitting}(a) all along the edges of $\kappa$.
\end{proof}


  The effect of vertex splitting on 
the signed genus in the cases that arise when we cut around
a non-self-intersecting cross-circuit is presented in the following lemma. 


\begin{lemma} \label{lem:vs_imp}
Let $M\equiv(C,\alpha_0,\alpha_1,\alpha_2)$ be a connected map, and let $a,b$ be crosses of $M$ that are coincident with a vertex $w$.  After vertex splitting $w$ through $a,b$ to make the map $M^{(\,a\:b\,)}$:
\begin{enumerate}[label=(\roman*)]
    \item  \label{en:vs_imp1} If $a,b$ are incident with distinct faces, then $\kk(M^{(\,a\:b\,)})=\kk(M)$ and  $\sg(M^{(\,a\:b\,)})=\sg(M)$.
    \item \label{en:vs_imp2} If $a,b$ are coincident with a common face and $\kk(M^{(\,a\:b\,)})>\kk(M)$, then  $\sg(M^{(\,a\:b\,)})=\sg(M)$.
    \item  \label{en:vs_imp3} If $a,b$ are coincident with a common face and $\kk(M^{(\,a\:b\,)})=\kk(M)$, then  $\sg(M^{(\,a\:b\,)})\neq \sg(M)$.
\end{enumerate}
\end{lemma}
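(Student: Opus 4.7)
The strategy is to recognize that vertex splitting (Definition~\ref{def:vertex_splitting}) is exactly the riffling operation of Definition~\ref{def:merging}, so parts (i) and (ii) will follow directly from Lemma~\ref{lem:riff_signedgenus}, while (iii) requires a direct Euler-characteristic calculation followed by an integrality argument. For part (i), the hypothesis that $a,b$ are coincident with $w$ and incident with distinct faces is precisely condition (2)(i) of Lemma~\ref{lem:riff_signedgenus}, yielding $\sg(M^{(\,a\:b\,)})=\sg(M)$; the face analysis in the proof of that lemma shows that riffling merges the two incident faces of $a,b$ into a single face $z'$ coincident with both of the new vertices $u,v$ that arise when $w$ splits. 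Hence $u$ and $v$ lie in the same component of $M^{(\,a\:b\,)}$, and no other component is affected, so $\kk(M^{(\,a\:b\,)})=\kk(M)$. Part (ii) is immediate: its hypothesis is exactly condition (2)(ii) of Lemma~\ref{lem:riff_signedgenus}.

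For part (iii) the first step is to compute $\Delta\ff$. Because $a,b$ are coincident with a common face $z$, both lie in the same cycle $\gamma_1$ of $\phi$, which can be written as $\gamma_1=(\;\;a\;\;\;\mathbf{p}\;\;\;\alpha_0\alpha_1 b\;\;\;b\;\;\;\mathbf{q}\;\;\;\alpha_0\alpha_1 a\;\;)$, with conjugate $\gamma_2=\alpha_0\overline{\gamma_1}$. One checks from Definition~\ref{def:merging} that $\widetilde{\phi}=\widetilde{\alpha}_1\alpha_0$ agrees with $\phi$ except at $\alpha_0 a,\alpha_0 b,\alpha_0\alpha_1 a,\alpha_0\alpha_1 b$; in particular the arrows $\alpha_0\alpha_1 a\to a$ and $\alpha_0\alpha_1 b\to b$ are swapped to $\alpha_0\alpha_1 a\to b$ and $\alpha_0\alpha_1 b\to a$, with analogous swaps for $\gamma_2$. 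Consequently $\gamma_1$ splits into the two cycles $(\;\;a\;\;\;\mathbf{p}\;\;\;\alpha_0\alpha_1 b\;\;)$ and $(\;\;b\;\;\;\mathbf{q}\;\;\;\alpha_0\alpha_1 a\;\;)$, $\gamma_2$ splits into two corresponding cycles, and the four new cycles pair into two faces of $M^{(\,a\:b\,)}$. Hence $\Delta\ff=+1$, independently of whether $M^{(\,a\:b\,)}$ is connected. Combined with $\Delta\vv=+1$, $\Delta\ee=0$, and the hypothesis $\Delta\kk=0$, this gives $\Delta\cchi=+2$ and therefore $\Delta\eg=2\Delta\kk-\Delta\cchi=-2$.

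To finish, apply Equation~\eqref{eq:gos}. If orientability is preserved then $\Delta\sg=\frac{2\oo-3}{\oo}\Delta\eg\in\{-1,+2\}$, both non-zero. If orientability flips, the equation $\sg(M)=\sg(M^{(\,a\:b\,)})$ forces $\eg(M)\in\{2/3,4/3\}$, which is impossible by integrality of the Euler genus. Either way $\sg(M^{(\,a\:b\,)})\neq\sg(M)$. The main obstacle is the face-cycle bookkeeping in part (iii): the splitting of $\gamma_1$ and $\gamma_2$ into two cycles each must be verified in degenerate configurations (e.g.\ $\mathbf{p}$ or $\mathbf{q}$ empty, or overlaps among $\{a,b,\alpha_1 a,\alpha_1 b\}$ beyond the minimum forced by $a$ and $b$ being distinct crosses on a common vertex and face), but these reduce to a handful of cases, all producing the same net $\Delta\ff=+1$.
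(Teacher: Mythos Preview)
Your proof is correct and follows essentially the same route as the paper: parts (i) and (ii) are reduced directly to conditions (2)(i) and (2)(ii) of Lemma~\ref{lem:riff_signedgenus}, and part (iii) is handled by an Euler-characteristic count followed by an orientability case split. You supply a more explicit justification of $\Delta\ff=+1$ than the paper does, and your value $\Delta\eg=-2$ is in fact the correct one (the paper writes $+2$, a sign slip that does not affect the conclusion that $\sg$ changes).
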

\begin{proof}
\ref{en:vs_imp1} In this case, crosses $a,b$ are in $M^{(\, a\: b\,)}$ incident with different vertices and coincident with a common face, so, by Lemma~\ref{lem:riff_signedgenus} and since $[M^{(\, a\: b\,)}]^{(\, a\: b\,)}=M$, we have   $\sg(M^{(\, a\: b\,)})=\sg(M)$. That $\mathbf k(M^{(\, a\: b\,)})=\mathbf k(M)$ follows from the fact that  the Euler characteristic and  Euler genus are unchanged. 

\ref{en:vs_imp2} 
 The assumption on the number of connected components implies that splitting $w$ into $u$ and $v$ disconnects the component of $M^{(\, a\: b\,)}$ containing $u$ from the one containing $v$. 
Therefore, 
 crosses $a,b$  are in  $M^{(\, a\: b\,)}$ incident with vertices and faces in different connected components. By Lemma~\ref{lem:riff_signedgenus}, the result follows. 
 
\ref{en:vs_imp3} From $\vv(M^{(\, a\: b\,)})=\vv(M)+1$, $\ee(M^{(\, a\: b\,)})=\ee(M)$ and $\ff(M^{(\, a\: b\,)})=\ff(M)+1$, it follows that $\cchi(M^{(\, a\: b\,)})=\cchi(M)+2$. 
The latter, together with the assumption $\kk(M^{(\, a\: b\,)})=\kk(M)$, implies that  $\eg(M^{(\, a\: b\,)})=\eg(M)+2$.
Hence,

\[ \mbox{if}\hspace{0.7cm}\begin{cases}\oo(M^{(\, a\: b\,)})=\oo(M)=2 \\
\oo(M^{(\, a\: b\,)})=\oo(M)=1 \\
\oo(M^{(\, a\: b\,)})=2, \oo(M)=1\\
\oo(M^{(\, a\: b\,)})=1, \oo(M)=2\end{cases}\hspace{0.7cm}\mbox{then}\hspace{0.7cm}\begin{cases}\sg(M^{(\, a\: b\,)})=\sg(M)+1 \\ 
\sg(M^{(\, a\: b\,)})=\sg(M)-2 \\ 
\sg(M^{(\, a\: b\,)})=1-\frac{\sg(M)}{2} \\ 
\sg(M^{(\, a\: b\,)})=-2\sg(M)-2 \qquad \qedhere 
\end{cases}
\]
\end{proof}

  The map
$M\ltimes \kappa$ is produced from $M$ by first splitting (or duplicating) the edges of $\kappa$, which does not change the signed genus of $M$, and then by a sequence of vertex-splittings. When $\kappa$ is a non-self-intersecting cross-circuit of $M$, by Lemma~\ref{lem:vs_imp}\ref{en:vs_imp1}  
splitting its vertices visited in turn does not change the number of connected components until possibly when splitting the last vertex. In the notation of Definition~\ref{def:cutting}, this last vertex-splitting is realized by riffling the crosses $\alpha_2'\alpha_0 c_\ell$ and $\alpha_2'c_0$, which are coincident with a common vertex and with a common face, and so  $\textbf{k}(M)\leq  \textbf{k}(M\ltimes \kappa)\leq \textbf{k}(M)+1.$
When $\textbf{k}(M\ltimes \kappa)=\textbf{k}(M)$, the non-self-intersecting cross-circuit $\kappa$ is said to be \emph{non-separating}, and it is \emph{separating} if $\textbf{k}(M\ltimes \kappa) =\textbf{k}(M)+1$.  Lemma~\ref{lem:vs_imp}\ref{en:vs_imp3} and   Lemma~\ref{lem:vs_imp}\ref{en:vs_imp2} give the effect on the signed genus for these two cases.

When $\kappa$ is a separating cross-circuit of a connected map $M$, the map $M\ltimes\kappa$ is the disjoint union of two maps:
\begin{itemize}
\item a connected map $M_\kappa$ containing the crosses of $\kappa$ and the facial walk $\kappa_{\ltimes}$, and
\item a connected map $M^\kappa$ containing the crosses of the facial walk $\kappa^{\ltimes}$.\end{itemize}
The facial walks $\kappa_{\ltimes}$ and $\kappa^{\ltimes}$ are both described in Lemma~\ref{lem:facial_walks_cut};  see also Figures~\ref{fig:prefacial_cross_circuit} and~\ref{fig:prefacial_cross_circuit(2)}.  We say that $\kappa$ is \emph{contractible} if  $M_\kappa$ or $M^\kappa$ is a plane map. For non-connected maps, a cross-circuit is said to be contractible if it has this property in its own connected component. This notion of contractibility includes, as a special case, the contractible cycles as defined by Mohar and Thomassen \cite{mohar01}, and is consistent with the definition given by Cabello and Mohar \cite{CM07}  when restricting the latter to those curves that can be $\epsilon$-perturbed so as not to be self-intersecting. (Our term ``contractible'' assumes that the cross-circuit is separating and therefore non-self-intersecting.)

\begin{definition}\label{def:prefacial_cross_circuit}
A  contractible cross-circuit $\kappa$ of a connected map $M$ is said to be  \emph{prefacial} if $M_{\kappa}$ is plane and the facial walk $\kappa_{\ltimes}$  traverses a cycle in the underlying graph of $M_{\kappa}$.
\end{definition} 


Asking a cross-circuit $\kappa$ to be prefacial adds to the contractibility constraint that it is contractible in a certain direction ($M_\kappa$ is plane), and the constraint on the facial walk $\kappa_\ltimes$ adds an irreducibility condition.     
Figure~\ref{fig:prefacial_cross_circuit} illustrates a contractible cross-circuit that is not prefacial because its direction of contraction is not toward the plane part. An example of a contractible cross-circuit $\kappa$ with $M_\kappa$  plane and not prefacial is given by traversing two plane loops on a vertex along the half-edges ``inside'' the loops. (Or take $\kappa$ given by $(\; a\;\; \phi a\;\; b\;\;\phi b\;\;\phi^2b\;)$ in the right-hand map in Figure~\ref{fig:vertexgluing}(a).)
  Figure~\ref{fig:prefacial_cross_circuit(2)} illustrates a prefacial cross-circuit.

\begin{figure}[ht]
    \centering
    \includegraphics[scale=0.7]{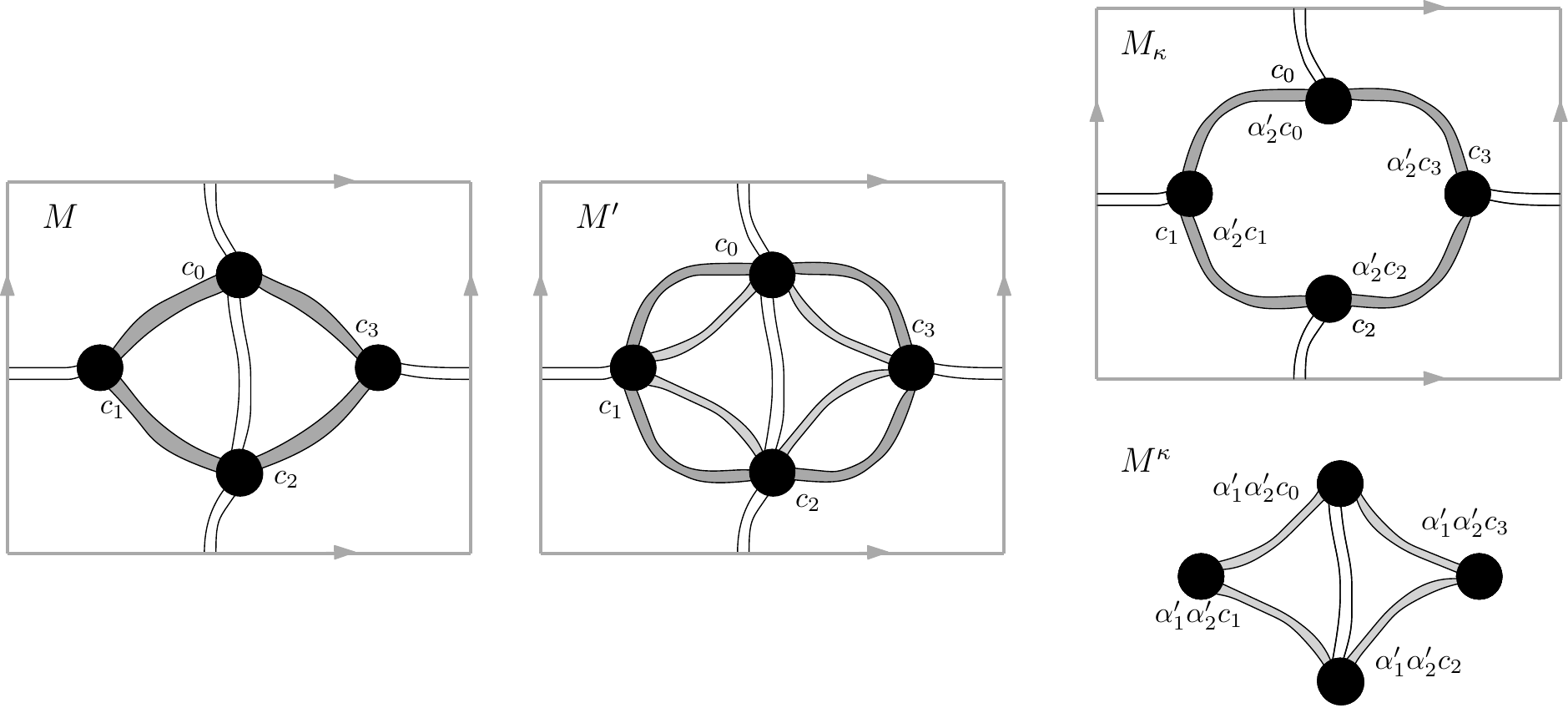}
    \caption{\small A cross-circuit $\kappa$ in a map $M$ (embedded in the torus) given by $(\; c_0 \;\; c_1 \;\; c_2 \;\; c_3 \;)$. Splitting the edges containing crosses of $\kappa$ (edges in dark grey) gives the map $M'$ (edges in light grey come from some edge splitting). The map $M\ltimes \kappa$ is then obtained by a sequence of vertex-splittings in $M'$: it is the disjoint union of $M_{\kappa}$ and $M^{\kappa}$. The cross-circuit $\kappa$ is contractible ($M^{\kappa}$ is plane) and not prefacial ($M_{\kappa}$ is not plane).
    }
    \label{fig:prefacial_cross_circuit}
\end{figure}

\begin{figure}[ht]
    \centering
    \includegraphics[scale=0.7]{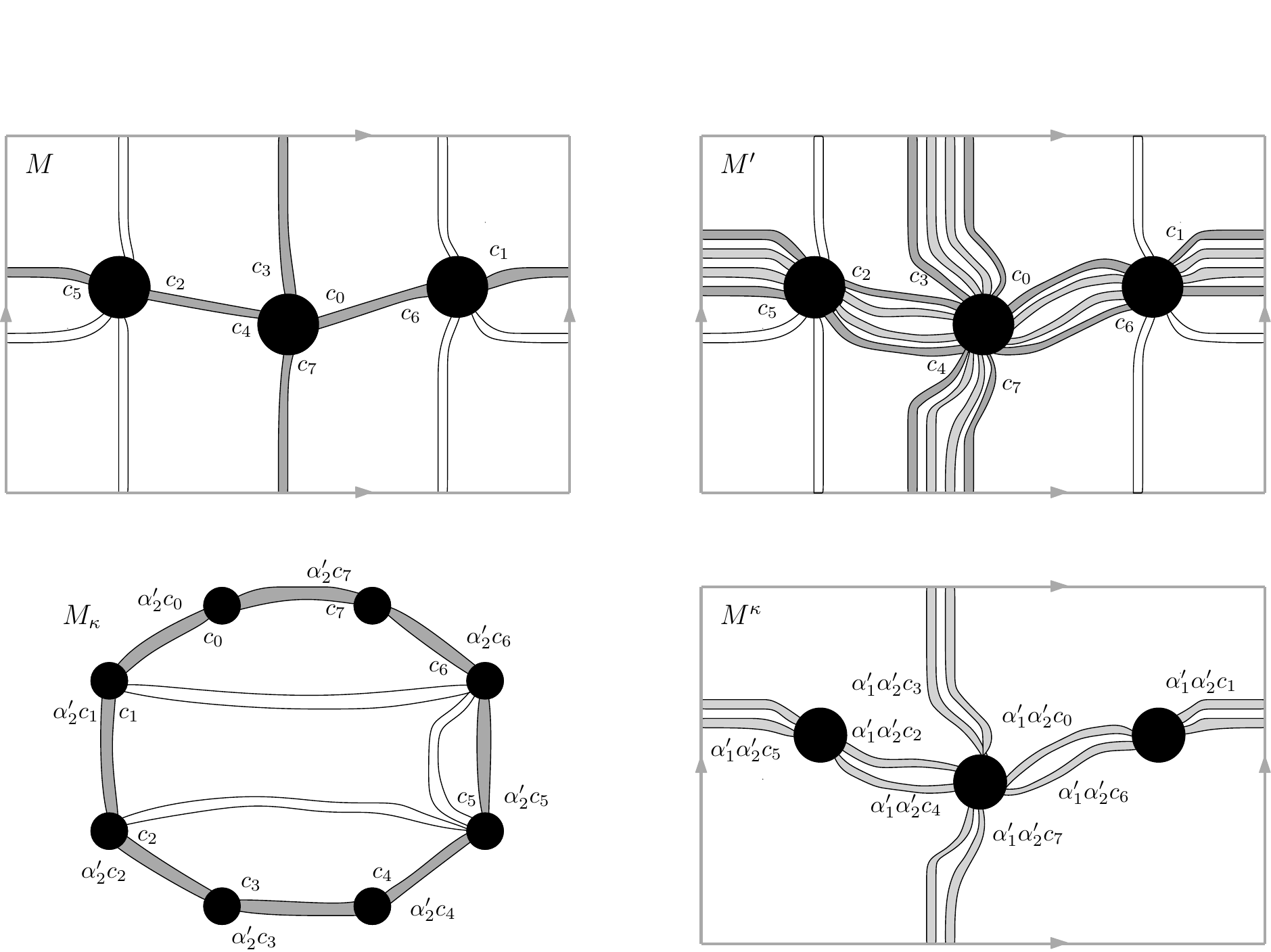}
   \caption{\small A cross-circuit $\kappa$ in a map $M$ (embedded in the torus) given by $(\; c_0 \;\; c_1 \;\; c_2 \;\; c_3 \;\; c_4 \;\; c_5 \;\; c_6 \;\; c_7\;)$. The map $M'$ is obtained 
by splitting the edges containing crosses of $\kappa$ (edges in dark grey); each edge is split twice as it contains two crosses in the cycle defining $\kappa$ (edges in light grey come from some edge splitting).  The map $M\ltimes \kappa$ is then obtained by a sequence of vertex-splittings in $M'$: it is the disjoint union of $M_{\kappa}$ and $M^{\kappa}$. The cross-circuit $\kappa$ is prefacial.}
    \label{fig:prefacial_cross_circuit(2)}
\end{figure}

\subsection{Cross-circuits and homomorphisms}\label{sec:circuits-hom}

In this section, we present technical tools on the interaction between map homomorphisms and  different types of cross-circuits; this leads to the characterization of map cores (Theorem~\ref{thm:core_char}). We begin by showing that there is a correspondence between prefacial cross-circuits of a connected map and facial walks of any connected submap of the same signed genus. Prefacial cross-circuits are then used to properly partition the set of edges of a map so that homomorphisms will act independently on each subset of the partition. 

\begin{lemma}\label{lem:cont_submap}
Let $M$ be a connected map, and let $N$ be a connected submap of $M$ with the same signed genus. Then,
\begin{enumerate}[label=(\roman*)]
    \item \label{en.count_sub_1} Each facial walk $\kappa$ in $N$ is a prefacial cross-circuit of $M$.
    \item \label{en.count_sub_2} Conversely, for each prefacial cross-circuit $\kappa$, there exists a submap $N(\kappa)$ with the same signed genus as $M$ such that $\kappa$ is a facial walk of $N(\kappa)$.
    \item \label{en.count_sub_3} 
    Each edge in $E(M)\setminus E(N)$ belongs to a unique $M_{\kappa}\setminus E(\kappa_{\ltimes})$ for some  facial walk $\kappa$ in $N$.
\end{enumerate}
\end{lemma}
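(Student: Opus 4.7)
The picture behind the lemma is topological: each face of $N$, as a disc in $N$'s embedding, in $M$'s embedding contains a plane subgraph consisting of the edges of $M$ ``hidden inside'' that face (they are plane since $\sg(N)=\sg(M)$). I plan to formalize this combinatorially via the cutting procedure of Section~\ref{sec:cutting}, proceeding part by part. By Lemma~\ref{l.link_equiv}(i), $N$ is obtained from $M$ by a sequence of dual-link and bridge deletions.

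For (i), let $\kappa=(\; c_0\;\cdots\; c_{\ell-1}\;)(\;\alpha_0 c_{\ell-1}\;\cdots\;\alpha_0 c_0\;)$ be a facial walk of $N$. The identity $c_i=\phi_N c_{i-1}=\alpha_1^N\alpha_0 c_{i-1}$ translates in $M$ to $c_i=\tau_M^{j_i}\phi_M c_{i-1}$, where $j_i$ counts the crosses at $v_i$ of edges deleted in forming $N$ that lie in $M$'s cyclic ordering between $\alpha_0 c_{i-1}$ and $c_i$; this verifies $\kappa$ is a cross-circuit of $M$. Non-self-intersection in $N$ is inherited in $M$ because $M$'s cyclic order at each vertex refines that of $N$ (extra crosses from deleted edges are interspersed between those already present in $N$). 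For separation, $\kappa$ bounds a disc in $N$'s embedding and since $M,N$ have the same signed genus (hence are embedded in the same surface), $\kappa$ is also separating in $M$: combinatorially the last vertex splitting in Definition~\ref{def:cutting} falls under Lemma~\ref{lem:vs_imp}(ii). The component $M_\kappa$ is embedded inside the disc (which contains only the deleted edges), hence plane. Finally, non-self-intersection ensures each vertex-visit and edge-traversal of $\kappa$ yields a distinct vertex and edge on $\kappa_\ltimes$, making it a cycle in $M_\kappa$'s underlying graph, so $\kappa$ is prefacial.

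For (ii), given a prefacial $\kappa$, let $A$ be the set of edges of $M$ whose crosses lie in $M_\kappa$ but not on $\kappa_\ltimes$, and define $N(\kappa)=M\backslash A$ (with resulting isolated vertices removed). To verify $\sg(N(\kappa))=\sg(M)$, I would identify $N(\kappa)$ with $M^\kappa$ by re-identifying the split copies of $\kappa$'s edges; since cutting preserves signed genus (edge splittings trivially; vertex splittings via Lemma~\ref{lem:vs_imp}(i)--(ii)) and the plane component $M_\kappa$ contributes nothing to Euler genus, additivity of $\eg$ together with Equation~\eqref{eq:gos} give $\sg(M^\kappa)=\sg(M)$, hence $\sg(N(\kappa))=\sg(M)$. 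That $\kappa$ is a facial walk of $N(\kappa)$ is then direct: at each vertex $v_i$, the crosses between $\alpha_0 c_{i-1}$ and $c_i$ in $M$'s cyclic order all belong to edges of $A$, so upon their deletion $\phi_{N(\kappa)}c_{i-1}=c_i$ with no intervening $\tau$-steps. Part (iii) follows immediately: for $e\in E(M)\setminus E(N)$, let $\kappa$ be the facial walk of $N$ bounding the unique face whose interior contains $e$; part (i) gives $\kappa$ prefacial, and $e$'s crosses lie in $M_\kappa$ but not in $E(\kappa_\ltimes)$, with uniqueness from the face-partition property of the embedding of $N$.

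The main obstacle is the combinatorial identification $N(\kappa)\cong M^\kappa$ used in (ii): while topologically transparent (both represent the complement of an embedded plane disc), in the cross-permutation framework one must carefully match the crosses of $\kappa$ in $N(\kappa)$ with those of $\kappa^\ltimes$ in $M^\kappa$ (the latter being the ``outer halves'' of split edges) and verify that the involutions $\alpha_0,\alpha_1,\alpha_2$ of $M^\kappa$ agree with those of $N(\kappa)$ under this identification.
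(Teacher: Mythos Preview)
Your approach differs substantially from the paper's, which is much more economical. Rather than analyzing the cutting construction directly, the paper proves (i) and (iii) together by \emph{rebuilding} $M$ from $N$: add the edges of $E(M)\setminus E(N)$ one at a time (keeping the intermediate maps connected). Since every intermediate map has the same signed genus as $M$ by Lemma~\ref{lem:s_submap}, Lemma~\ref{l.link_equiv}(i) forces each added edge to be a dual link or a bridge in the current map---so it either subdivides an existing face into two or attaches a new pendant vertex. A short induction then shows that every facial walk of $N$ stays prefacial throughout, and (iii) comes for free because each new edge lands in exactly one face of $N$. Part (ii) is then one line: delete from $M$ the edges and vertices of $M_\kappa$ not on $\kappa$; the same dual-link/bridge observation (now run in reverse, using that $M_\kappa$ is plane) gives the signed-genus claim without any need to identify $N(\kappa)$ with $M^\kappa$.

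Your direct route has the right picture but leans on topology at the points that need combinatorial proof. In (i), you assert separation because $\kappa$ bounds a disc in $N$ and then say the last vertex splitting ``falls under Lemma~\ref{lem:vs_imp}(ii)''---but that lemma has $\kk(M^{(a\,b)})>\kk(M)$ as a \emph{hypothesis}, so this is circular. Likewise, ``$M_\kappa$ is embedded inside the disc, hence plane'' is exactly the statement to be proved in the cross-permutation framework. Your justification that $\kappa_\ltimes$ is a cycle from non-self-intersection alone is also too quick: the paper itself exhibits a non-self-intersecting contractible $\kappa$ with $M_\kappa$ plane but $\kappa_\ltimes$ not a cycle (two plane loops traversed along their inner half-edges), so the facial-walk hypothesis must do real work here. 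The obstacle you flag in (ii)---matching $N(\kappa)$ with $M^\kappa$ at the level of crosses---is genuine for your approach, and the paper's inductive framing is precisely what dissolves it.
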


\begin{proof}
For \ref{en.count_sub_1} and \ref{en.count_sub_3} it suffices to prove that if we add the edges $e$ of $E(M)\setminus E(N)$ to $N$ one by one (so that the resulting map is always connected), each edge $e$ either subdivides an existing face or adds a new vertex. Then, a small inductive argument over the facial walks of the successive maps establishes the result.

As the orientability of the submaps generated by adding the edges $e$ remains constant, so does the signed genus and the Euler genus (see Lemma~\ref{lem:s_submap}). Then, all these submaps have the same genus, which, by Lemma~\ref{lem:effect_del_sg}, implies that edge $e$ is either a dual link (hence dividing a face into two) or a bridge (adding a new vertex), as desired.

Part \ref{en.count_sub_2} follows from removing in $M$ the edges and vertices in $M_{\kappa}$ that do not belong to $\kappa$; this operation creates a submap $N(\kappa)$ of $M$ satisfying the conditions of the statement. 
\end{proof}


 \begin{lemma}\label{lem:lift_homomorphism}
 Let $\kappa$ be a separating cross-circuit of a connected map $M$.
 \begin{enumerate}[label=(\roman*)]
     \item \label{en:lift_hom_1}A homomorphism $h_\kappa:M_\kappa\to M_\kappa$ fixing the crosses of $\kappa$ induces a homomorphism $h:M\to M$ with the following properties:
     \begin{itemize}
         \item $\kappa$ is a separating cross-circuit of $h(M)$,
         \item $h(M)\ltimes\kappa$ consists of the map $M^\kappa$ and the connected map $h_\kappa(M_\kappa)$,
         \item $h$ is the identity on $M\setminus [E(M_{\kappa})\setminus E(\kappa)]$.
     \end{itemize}
     
   \item \label{en:lift_hom_2}  The cross-circuit $\phi_z$ corresponding to a face $z$ in $M$ is a facial walk of either $M_\kappa$ or $M^\kappa$.
Conversely, a facial walk of $M_\kappa$ (resp. $M^{\kappa}$) other than $\kappa_\ltimes$ (resp. $\kappa^{\ltimes}$)  defines a facial walk of~$M$. 
 \end{enumerate}
 \end{lemma}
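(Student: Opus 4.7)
My plan is to handle part (ii) first, since the identification of facial walks it provides is a key ingredient for the construction in part (i).

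For part (ii), the essential calculation is that cutting around $\kappa$ does not disturb the face permutation restricted to the original crosses of $M$. Each vertex-splitting in Definition~\ref{def:cutting} riffles a pair of new crosses $\alpha_2'\alpha_0 c_i$ and $\alpha_1'\alpha_2' c_{i+1}$; computing via Definition~\ref{def:edge_splitting}, the two other crosses affected by each such riffle are $\alpha_1'(\alpha_2'\alpha_0 c_i)=\alpha_2'\alpha_0\alpha_2 c_i$ and $\alpha_1'(\alpha_1'\alpha_2' c_{i+1})=\alpha_2' c_{i+1}$, both again among the new crosses introduced by the edge-splittings. Hence $\alpha_1'$ on $M\ltimes\kappa$ agrees with $\alpha_1$ on the original cross set $C$ of $M$. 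Since $\alpha_0'=\alpha_0$ on $C$ by Definition~\ref{def:edge_splitting}, the face permutation $\phi'=\alpha_1'\alpha_0'$ restricts to $\phi$ on $C$. Therefore every cycle pair $\phi_z$ for a face $z$ of $M$ remains a cycle pair of $\phi'$, i.e., a facial walk of $M\ltimes\kappa=M_\kappa\sqcup M^\kappa$, lying in exactly one component. Conversely, since the new crosses in $M_\kappa$ (respectively $M^\kappa$) all belong to $\kappa_\ltimes$ (respectively $\kappa^\ltimes$), any facial walk of $M_\kappa$ other than $\kappa_\ltimes$ uses only crosses from $C$, and so by the same identification of $\phi'|_C$ with $\phi$, is a facial walk of $M$.

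For part (i), I plan to decompose $h_\kappa=m_\kappa\circ n_\kappa$ per Definition~\ref{def:maphom_full} and apply Lemma~\ref{rem:ext_to_sub} to arrange that $h_\kappa$ restricts to the identity on the submap $h_\kappa(M_\kappa)\subseteq M_\kappa$, hence fixes all crosses of $\kappa$. The key structural fact is that the vertex gluings and duplicate-edge gluings defining $n_\kappa$ involve only crosses in $C$ lying on the $M_\kappa$ side of $M$ and not on $\kappa$: a duplicate-edge gluing removes four crosses from its two edges, so fixing the crosses of $\kappa$ prevents any duplicate-edge gluing from involving an edge of $\kappa$ (equivalently, the associated new crosses of $\kappa_\ltimes$ are untouched), and part~(ii) identifies the remaining crosses on which the gluings act as original crosses of $M$. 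I then define $h:M\to M$ by applying the same sequence of vertex gluings and duplicate-edge gluings to $M$, composed with the natural extension of the isomorphism component of $m_\kappa$ by the identity on the $M^\kappa$ side of $M$. Part~(ii) again validates each vertex gluing, since a common face of $M_\kappa$ distinct from $\kappa_\ltimes$ is a common face of $M$ containing the same original crosses. The three bullet points then follow: since $h$ fixes the crosses of $\kappa$, $\kappa$ is a cross-circuit of $h(M)$; the $M^\kappa$ side of $M$ is untouched so $\kappa$ remains separating with $h(M)\ltimes\kappa=h_\kappa(M_\kappa)\sqcup M^\kappa$; and $h$ acts as the identity on the vertices and edges outside $E(M_\kappa)\setminus E(\kappa)$, since the gluings never touch those crosses.

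The main obstacle is justifying the lifting step: a vertex gluing between crosses $p,q$ valid in $M_\kappa$ must translate to a valid vertex gluing in $M$ on the same crosses. This requires verifying, via part~(ii), that the face or distinct-connected-component condition of Definition~\ref{def:vertex_gluing} transfers from $M_\kappa$ to $M$. The case of a common face is handled by the face-identification of part~(ii); the case of different connected components uses that cutting yields at most two components, so distinct components within $M_\kappa$ must already be distinct components of $M$.
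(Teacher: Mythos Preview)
Your proof is essentially correct and follows the same route as the paper. For part~(ii) you make explicit the computation that each vertex-splitting riffle in Definition~\ref{def:cutting} touches only the four new crosses $\alpha_2'\alpha_0 c_i$, $\alpha_2'\alpha_0\alpha_2 c_i$, $\alpha_2' c_{i+1}$, $\alpha_2'\alpha_2 c_{i+1}$, so that $\phi'|_C=\phi$; the paper instead cites Lemma~\ref{lem:facial_walks_cut} for the same conclusion. For part~(i) both you and the paper lift the vertex and duplicate-edge gluings of $h_\kappa$ directly to $M$; the paper phrases the verification that the image is the expected submap as ``reversing the cutting operations,'' whereas you argue via the face correspondence of part~(ii).

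One small correction: you claim that the rifflings in $n_\kappa$ involve only crosses ``not on $\kappa$.'' This is slightly too strong. A vertex gluing in $M_\kappa$ may riffle a cross $c_i$ of $\kappa$ with some interior cross, provided they share a face of $M_\kappa$ other than $\kappa_\ltimes$. What actually matters (and what the paper asserts) is that the riffled crosses are not in $\kappa_\ltimes$, i.e.\ they are all original crosses of $M$; this is what lets you perform the same riffle in $M$. Your justification for that weaker claim is sound: if one riffled cross were in $\kappa_\ltimes$ then both would be (the only common face is $\kappa_\ltimes$), forcing two distinct boundary vertices to merge, contradicting that $h_\kappa$ fixes crosses of $\kappa$ and has image a submap of $M_\kappa$.

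One point you flag as the ``main obstacle'' deserves a word more of care: the face correspondence of part~(ii) must hold not just for $M$ and $M_\kappa$ but at every intermediate stage $M^{(k)}$, $M_\kappa^{(k)}$. This follows by induction, since each riffle conjugates $\alpha_1$ identically in both maps (the riffled crosses and their $\alpha_1$-images all lie in $C$), so $\phi^{(k)}$ continues to agree on $C$. Your invocation of Lemma~\ref{rem:ext_to_sub} to make $h_\kappa$ the identity on its image is harmless but not needed; the hypothesis that crosses of $\kappa$ are fixed already suffices.
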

  \begin{proof}
\ref{en:lift_hom_1} Since the crosses of $\kappa$ are fixed by $h_{\kappa}$, the rifflings associated with the vertex gluings that $h_{\kappa}$ defines in $M_{\kappa}$ do not involve any cross from $\kappa_{\ltimes}$. Further, we may assume that no cross from $\kappa_{\ltimes}$ is removed when gluing duplicate edges (in the notation of Definition~\ref{def:edge_gluing} at least one of the crosses $a$ and $b$ does not belong to $\kappa_{\ltimes}$; if one of them belongs, say $a$, we can say  that the edge containing $a$ still belongs to the map). Similarly, we may consider the identity map on $M^{\kappa}$, and conclude that  no vertex or duplicate edge gluings involve crosses from $\kappa^{\ltimes}$. 

As $\kappa$ is separating, Lemma~\ref{lem:vs_imp}\ref{en:vs_imp2}\ref{en:vs_imp1} 
give that $\sg(M)=\sg(M\ltimes \kappa)=\sg(M_{\kappa}\sqcup M^{\kappa})$. Thus, we can reverse the cutting operations that form $M\ltimes \kappa$ by gluing the appropriate vertices (in the  notation of Definition~\ref{def:cutting} these vertex gluings correspond to riffling back all the crosses $(\alpha_2'\alpha_0 c_i,\alpha_1'\alpha_2' c_{i-1})$, indices modulo $\ell$) and then gluing the edges of $\kappa_{\ltimes}$ with the edges of $\kappa^{\ltimes}$. This creates a homomorphism $h:M\to M$ satisfying the stated properties.

\ref{en:lift_hom_2} Cutting around a non-self-intersecting cross-circuit~$\kappa$ creates, by Lemma~\ref{lem:facial_walks_cut}, two additional faces in $M\ltimes \kappa$, and these precisely involve those crosses added in the process of cutting around; the remaining facial walks not involving the added crosses to $M\ltimes \kappa$ are unchanged. As in addition additional separating hypothesis on $\kappa$, $M\ltimes \kappa=M_{\kappa}\sqcup M^{\kappa}$ and the result follows.
 \end{proof}

 A face $z$ of a map $M$ that is a face of $M_{\kappa}$ (in the sense that the pair of cross cycle permutations of $\phi_z$ defines  a facial walk of both $M$ and $M_\kappa$) is said to be {\em contained in} $\kappa$ (and $\kappa$ {\em contains}~$z$).


\begin{observation}\label{obs.parity_faces}
A  prefacial cross-circuit $\kappa$ of odd length is either a facial walk or contains an odd number of odd-degree faces. If $\kappa$ has even length, it contains an even number of odd-degree faces. The argument to prove the odd-length statement  is the following (similar for even length): $M_{\kappa}$ is plane, and has an odd-degree face defined by $\kappa_{\ltimes}$. Considering the dual map~$M_{\kappa}^*$, there must be an odd number of other faces of odd degree in $M_\kappa$, each of which by Lemma~\ref{lem:lift_homomorphism}\ref{en:lift_hom_2} is a face of $M$ contained in $\kappa$ (by the handshaking lemma, the number of odd-degree vertices is even). 
\end{observation}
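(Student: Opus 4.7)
The plan is to use Lemma~\ref{lem:lift_homomorphism}\ref{en:lift_hom_2} to transport the parity question from $M$ to the plane map $M_{\kappa}$, where a handshaking argument for faces applies.

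First I would set up the notation: write $\ell=\ell(\kappa)$. Since $\kappa$ is prefacial, $M_\kappa$ is plane and connected, and by the prefacial hypothesis the facial walk $\kappa_\ltimes$ traverses a cycle of the underlying graph of $M_{\kappa}$; by Lemma~\ref{lem:facial_walks_cut} this face has degree $\ell$. By Lemma~\ref{lem:lift_homomorphism}\ref{en:lift_hom_2}, every facial walk of $M_\kappa$ other than $\kappa_\ltimes$ is a facial walk of $M$, and these account precisely for the faces of $M$ ``contained in'' $\kappa$ (those $z$ with $\phi_z$ a facial walk of $M_\kappa$). Conversely, every face of $M$ contained in $\kappa$ arises in this way.

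Next I would invoke the handshaking lemma for faces applied to $M_{\kappa}^{*}$ (equivalently, the fact that $\sum_{z}\deg(z)=2\ee(M_\kappa)$ in any map): the number of odd-degree faces of $M_\kappa$ is even. Denote by $N$ the number of odd-degree faces of $M$ contained in $\kappa$; then the total number of odd-degree faces of $M_\kappa$ is $N+[\ell \text{ odd}]$, where $[\cdot]$ is the Iverson bracket. So $N\equiv \ell \pmod 2$, yielding both parts of the observation at once: if $\ell$ is even then $N$ is even, and if $\ell$ is odd then $N$ is odd.

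The only subtle point is the ``either $\kappa$ is a facial walk'' clause in the odd-length case. This covers the degenerate situation in which $M_{\kappa}$ has no faces besides $\kappa_\ltimes$ and $\kappa$ itself, i.e.\ when $\kappa=\phi_z$ for some face $z$ of $M$; then $N=1$ with the single odd face being $z=\kappa$, which one may either count (matching the ``odd number of odd-degree faces'' alternative) or separate out (matching the ``$\kappa$ is a facial walk'' alternative). Either way the parity statement is correct, so no extra work is needed; I would simply remark that the formula $N\equiv \ell\pmod 2$ subsumes both alternatives. I do not expect a real obstacle here: the whole argument is a one-line parity count once Lemmas~\ref{lem:facial_walks_cut} and~\ref{lem:lift_homomorphism}\ref{en:lift_hom_2} are in hand.
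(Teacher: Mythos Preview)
Your proposal is correct and follows essentially the same argument as the paper: identify the faces of $M_\kappa$ other than $\kappa_\ltimes$ with the faces of $M$ contained in $\kappa$ via Lemma~\ref{lem:lift_homomorphism}\ref{en:lift_hom_2}, note that $\kappa_\ltimes$ has degree $\ell$, and apply the handshaking lemma in the dual $M_\kappa^*$ to conclude $N\equiv \ell\pmod 2$. Your treatment of the degenerate facial-walk case is slightly more explicit than the paper's, but there is no substantive difference.
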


Fixed a  prefacial cross-circuit $\kappa$ of a map $M$, there is a partial order $\preceq$ on contractible cross-circuits of $M_\kappa$ defined by: $\lambda\preceq \mu$ if 
$\lambda$ determines a cross-circuit in $M_\mu$ and all the crosses in $M_{\lambda}$ belong to $M_{\mu}$ (perhaps with the exception of $\lambda_{\ltimes}$). This order has maximum element $\kappa_{\ltimes}$ and, for  each face $z$ of $M_\kappa$, one of its minimal elements is $\phi_z$.

We next use the partial order $\preceq$ and bring  together the previous results in this section to establish when a whole plane map can be sent (in a map-homomorphic way) to its surrounding  prefacial cross-circuit. When no confusion may arise, we shall use $M\to \lambda$ to
indicate that a map $M$ is mapped to the map induced by the edges of a prefacial cross-circuit $\lambda$.

\begin{lemma}\label{lem:technical_1}
  Let $\kappa$ be a 
prefacial cross-circuit of a connected map $M$. Assume that there exists a face $z$ of $M_\kappa$ such that:
\begin{enumerate}[label=(\roman*)]
\item $z$ is different from the face defined by $\kappa_{\ltimes}$ in $M_\kappa$;
\item \label{en.tech_1} the degree of $z$ 
is at least the degree of the face traversed by $\kappa_{\ltimes}$, and both faces have degree of the same parity;
\item \label{en.tech_3} every other face of $M_\kappa$ has even degree;
\item \label{en.tech_4}
 $\ell(\kappa) \leq\ell(\lambda)$ for every prefacial cross-circuit $\lambda$ of $M_\kappa$ such that $\phi_z\preceq\lambda\preceq\kappa$. 
    \end{enumerate}
Then there is an endomorphism of $M_\kappa$ that maps $M_\kappa$ to the map induced by the edges of $\kappa$ (equivalently $\kappa_{\ltimes}$) and it is the identity over $\kappa$ and $\kappa_{\ltimes}$.
\end{lemma}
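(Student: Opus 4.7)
The plan is to proceed by induction on $\ee(M_\kappa)$. In the base case $\ee(M_\kappa) = \ell(\kappa)$, the fact that $M_\kappa$ is plane, connected and $\kappa_\ltimes$ traces a cycle of length $\ell(\kappa)$ forces, via Euler's formula, $M_\kappa$ to consist exactly of that cycle with two faces of degree $\ell(\kappa)$, namely $\kappa_\ltimes$ and $z$; the identity endomorphism then satisfies the conclusion.

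For the inductive step with $\ee(M_\kappa) > \ell(\kappa)$, I would exhibit a single reduction---either a duplicate edge gluing, or a vertex gluing followed by a duplicate edge gluing---that strictly decreases $\ee(M_\kappa)$ while preserving hypotheses (i)--(iv) with the same $\kappa$, and fixing $\kappa$ together with $\kappa_\ltimes$. First I would handle the case in which $M_\kappa$ contains a face $y\ne\kappa_\ltimes,z$ of degree two: its two edges are duplicate, and gluing them produces $M'_\kappa$ with one fewer edge. Conditions (i)--(iii) transfer to $M'_\kappa$ because $z$, $\kappa_\ltimes$ and the parities of remaining faces are untouched, and (iv) transfers because every prefacial cross-circuit $\lambda$ of $M'_\kappa$ with $\phi_z \preceq \lambda \preceq \kappa$ lifts to a prefacial cross-circuit of $M_\kappa$ of the same length. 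The inductive hypothesis then yields an endomorphism of $M'_\kappa$ mapping onto $\kappa$ and fixing $\kappa, \kappa_\ltimes$; composing with the duplicate edge gluing gives the desired endomorphism of $M_\kappa$.

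The substantive case is when every face of $M_\kappa$ other than $\kappa_\ltimes$ and $z$ has degree at least four. I would locate such a face $y$ adjacent to $\kappa_\ltimes$ (one exists whenever $M_\kappa \neq \kappa$, by a connectedness argument on the dual of $M_\kappa$), and select two vertices $u,v$ on the boundary of $y$ so that gluing them splits $y$ into a bigon and a proper face of smaller even degree, reducing to the bigon case already dispatched. Condition (iv) drives the choice of $u,v$: the pair must be selected so as not to create, after the gluing, a prefacial cross-circuit of $M'_\kappa$ between $\phi_z$ and $\kappa$ of length less than $\ell(\kappa)$, and hypothesis (ii) ensures that $y$ is interposed between $\kappa$ and $z$ in a way that such a choice is available.

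The main obstacle is verifying that (iv) survives this vertex gluing. A putative prefacial cross-circuit of $M'_\kappa$ of length less than $\ell(\kappa)$ lying between $\phi_z$ and $\kappa$ would, via the inverse vertex-splitting of Definition~\ref{def:vertex_splitting} and the inventory of facial walks from Lemma~\ref{lem:facial_walks_cut}, descend to a cross-circuit of $M_\kappa$ that either directly contradicts the assumed minimality in (iv) for $M_\kappa$, or produces a separating cross-circuit through which Lemma~\ref{lem:lift_homomorphism} lets us piece the endomorphism together from smaller inductive instances. Showing that $u,v$ can always be chosen to preclude both pathologies, and that the resulting reduction is well-defined, is the delicate combinatorial step on which the argument hinges.
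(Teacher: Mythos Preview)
Your overall induction scheme and the two reduction moves (duplicate edge gluing; vertex gluing followed by duplicate edge gluing) are exactly what the paper uses, and your base case is the same. The genuine gap is precisely where you flag it: you never actually verify that a pair $u,v$ can be chosen so that hypothesis~(iv) survives, and you do not explain how to handle the case where such a choice is obstructed. Saying ``hypothesis~(ii) ensures \dots\ such a choice is available'' is not enough; hypothesis~(ii) only bounds $\deg(z)$ against $\ell(\kappa)$ and says nothing directly about prefacial cross-circuits strictly between $\phi_z$ and~$\kappa$.

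The paper closes this gap by organizing the induction differently: it first splits on whether there \emph{exists} a prefacial cross-circuit $\lambda$ with $\phi_z\prec\lambda\prec\kappa$. If not (their Case~1), then any vertex gluing that shortens a face adjacent to~$\kappa$ cannot create a forbidden short $\lambda$, because any such $\lambda$ in the glued map would lift to one in $M_\kappa$, contradicting the case hypothesis; this lets them glue along a face $z'\neq z$ touching~$\kappa$ at specific crosses $c_0,c_2$. If such a $\lambda$ does exist (their Case~2), they pick $\lambda_0$ minimal above $\phi_z$ with $\ell(\lambda_0)=\ell(\kappa)$. When $\lambda_0\neq\kappa$ they apply induction to $M_{\lambda_0}$ and lift via Lemma~\ref{lem:lift_homomorphism}, then apply induction again outside --- this two-step decomposition is the idea your last paragraph only gestures at. When $\lambda_0=\kappa$, every intermediate $\lambda$ has length \emph{strictly} greater than $\ell(\kappa)$, so a single vertex gluing (which can shorten such a $\lambda$ by at most two, hence to at least $\ell(\kappa)$ by parity) still preserves~(iv). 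That strict-versus-nonstrict bookkeeping is the mechanism you are missing; without it, your ``select $u,v$ so as not to create a short $\lambda$'' remains unjustified.
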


  The following facts are used in the proof of the preceding lemma.

\begin{observation} \label{obs.some_obs_contract}
Under the hypotheses of Lemma~\ref{lem:technical_1} it holds that:
\begin{itemize}
    \item $M_{\kappa}$ and all its connected submaps are plane, and hence orientable. Thus, we can indicate a single permutation cycle associated to faces, vertices, and contractible cross-circuits. 
    \item Any cross-circuit of $M_{\kappa}$ that is a cycle in the underlying graph of $M_{\kappa}$ is a contractible cross-circuit (as $M_{\kappa}$ is plane).
    \item Any  prefacial cross-circuit $\lambda$ such that $\phi_z\preceq\lambda\preceq\kappa$ has the same parity as $\kappa$ and $\phi_z$ (this follows from assumptions \ref{en.tech_1} and \ref{en.tech_3} by a parity argument involving the fact that if an even contractible cross-circuit contains an odd degree face, then it contains at least two of them (Observation~\ref{obs.parity_faces})).

\end{itemize}

\end{observation}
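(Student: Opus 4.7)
I will establish the three bullets in order, using Definition~\ref{def:prefacial_cross_circuit}, Lemma~\ref{lem:s_submap}, the combinatorics of cutting developed in Section~\ref{sec:cutting}, and Observation~\ref{obs.parity_faces}.

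For the first bullet, since $\kappa$ is prefacial, Definition~\ref{def:prefacial_cross_circuit} gives $\eg(M_\kappa)=0$. By Lemma~\ref{lem:s_submap}\ref{en:prop_s_submap_1}, any submap $N$ of $M_\kappa$ satisfies $0\le\eg(N)\le\eg(M_\kappa)=0$, so $N$ is plane and therefore orientable. Orientability permits selecting a single representative cycle for each vertex, face, and cross-circuit, since the two orbits of $\langle\tau,\alpha_0\alpha_2\rangle$ (dually, of $\langle\phi,\alpha_0\alpha_2\rangle$) are mutually inverse.

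For the second bullet, let $\lambda$ be a cross-circuit of $M_\kappa$ traversing a cycle in the underlying graph, and write $\ell=\ell(\lambda)$. Since each vertex of a graph cycle is visited exactly once, only two crosses of $\lambda$ are incident with any one vertex, so the four-cross interlacing defining self-intersection cannot occur, and $\lambda$ is non-self-intersecting. In the cutting procedure each of the $\ell$ cycle edges is split once, adding $\ell$ edges, while the $\ell$ vertex splittings add $\ell$ vertices; by Lemma~\ref{lem:facial_walks_cut} the facial walks of $M_\kappa\ltimes\lambda$ are those of $M_\kappa$ together with the two new facial walks $\lambda_\ltimes$ and $\lambda^\ltimes$. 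Hence
\[\cchi(M_\kappa\ltimes\lambda)=\vv(M_\kappa)+\ell-\ee(M_\kappa)-\ell+\ff(M_\kappa)+2=\cchi(M_\kappa)+2=4.\]
Were $\lambda$ non-separating, we would have $\kk(M_\kappa\ltimes\lambda)=1$ and $\eg(M_\kappa\ltimes\lambda)=2-4<0$, which is absurd. Thus $\kk(M_\kappa\ltimes\lambda)=2$, and because Euler genus is additive over components, both components have Euler genus $0$, i.e.\ are plane, confirming that $\lambda$ is contractible.

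For the third bullet, $\lambda$ is prefacial by hypothesis, so Observation~\ref{obs.parity_faces} applies. The faces of $M$ contained in $\lambda$ consist of $z$ together with a subset of the remaining faces of $M_\kappa$; by hypothesis~\ref{en.tech_3} all faces of $M_\kappa$ other than $z$ and the face of $\kappa_\ltimes$ have even degree, and by hypothesis~\ref{en.tech_1} the degree of $z$ shares the parity of $\ell(\kappa)$. If $\kappa$ has even length, then $\lambda$ contains zero odd-degree faces, forcing $\ell(\lambda)$ even by Observation~\ref{obs.parity_faces}. If $\kappa$ has odd length, then $\lambda$ contains exactly one odd-degree face (namely $z$), and either $\lambda=\phi_z$ is itself the facial walk of an odd-degree face, or else Observation~\ref{obs.parity_faces} forces $\ell(\lambda)$ odd. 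In both cases $\lambda$ inherits the common parity of $\kappa$ and $\phi_z$. The principal subtlety lies in bullet two, where one must carefully justify the face count using Lemma~\ref{lem:facial_walks_cut}; the key insight is that planarity of $M_\kappa$ forces non-negativity of $\eg(M_\kappa\ltimes\lambda)$, which rules out the non-separating case.
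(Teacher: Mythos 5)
Your proof is correct and follows essentially the same route the paper indicates in its inline justifications: planarity of $M_{\kappa}$ via Definition~\ref{def:prefacial_cross_circuit} and Lemma~\ref{lem:s_submap} for the first bullet, planarity (which you make precise with the Euler-characteristic count ruling out the non-separating case) for the second, and Observation~\ref{obs.parity_faces} combined with hypotheses \ref{en.tech_1} and \ref{en.tech_3} for the third. The only nitpick is the phrase ``faces of $M$ contained in $\lambda$'' in the last bullet, which should read faces of $M_{\kappa}$ (harmless, since by Lemma~\ref{lem:lift_homomorphism}\ref{en:lift_hom_2} these coincide with faces of $M$ inside $\kappa$).
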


\begin{proof}[Proof of Lemma~\ref{lem:technical_1}.]
We show the result inductively on the number of edges and vertices inside $M_{\kappa}$.
The base case occurs when $\phi_z= \kappa$, and so $M_{\kappa}$ is a plane cycle; the statement follows by using the identity map as a map homomorphism. Suppose now that $\phi_z\neq \kappa$; the induction argument follows a case analysis.

\vspace{0.2cm}

  \emph{Case 1: There is no prefacial cross-circuit $\lambda$ with $\phi_z\preceq\lambda\preceq\kappa$ and $\phi_z\neq\lambda\neq\kappa$.} 
  Let $z'$ be a face different from $z$ containing crosses of $\kappa$ in $M_{\kappa}$ (it exists as  $\phi_z\neq \kappa$). We shall assume that $\phi_{z'}\neq \kappa$,  otherwise $\kappa$ would only contain $z'$ and not $z$ as well. Let  $\phi_{z'}=(\;\;c_0\;\;\;\cdots\;\;\;c_s\;\;)$ such that $c_0 \in \kappa$ and $c_1\notin \kappa$. Note that $z'$ must contain crosses in $\kappa$ and outside $\kappa$: if all the crosses were in $\kappa$, it would be $\kappa=\phi_{z'}$ as $\kappa_{\ltimes}$ is a cycle in the underlying graph of the plane map; if all the crosses were outside $\kappa$, one could find a prefacial cross-circuit $\lambda$ between  $\phi_z$ and $\kappa$ (using the connectedness of $M$ and $M_{\kappa}$.).
 

\vspace{0.2cm}

  \emph{Case 1.1: If the vertex adjacent to 
  $c_2$ belongs to $\kappa$ and is the same as the vertex of $c_0$}, then $c_2=c_0$; otherwise we can find a  prefacial cross-circuit $\lambda$ strictly shorter than $\kappa$ containing $z$ but,  as there are no other prefacial circuits between them,  it must be $\lambda=\phi_z$, which contradicts  
 assumption \ref{en.tech_1}. Hence,  we conclude $c_2=c_0$ and $z'$ is a face of length $2$.
By removing/gluing the edge closing $z'$, we obtain a map $M_{\kappa}'$ with no new  prefacial cross-circuits between $\phi_z$ and $\kappa$ satisfying the hypotheses of Lemma~\ref{lem:technical_1}. By induction, $M_{\kappa}'$ can be mapped to $\kappa$ by a homomorphism. Thus, the composition of first removing/gluing the edge from $M_{\kappa}$ to $M_{\kappa}'$ and then mapping $M_{\kappa}'$ to $ \kappa$ gives us the result.

\vspace{0.2cm}

  \emph{Case 1.2: If the vertex adjacent to  $c_2$ belongs to $\kappa$ and is different than the vertex of $c_0$,} then $\kappa$ is of the form $(\;\;c_0\;\;\; c_1'\;\;\;\cdots\;\;\; c_i'\;\;\; c_{2}\;\;\; c_{i+2}'\;\;\;\cdots\;\;\; c_{\ell(\kappa)-1}'\;\;)$ and it can be partitioned into the crosses $\{c_1',\ldots, c_i'\}$ and $\{c_2,c_{i+2}',\ldots,c_{\ell(\kappa)-1}',c_0\}$. Then, either the  prefacial cross-circuit formed by $\alpha_2\alpha_0c_1$ and the crosses $\{c_1',\ldots, c_i'\}$, or the one formed by $c_1$ and $\{c_2,c_{i+2}',\cdots,c_{\ell(\kappa)-1}',c_0\}$  contains face $z$. Hence, this prefacial cross-circuit must be $\phi_z$ as there are no prefacial cross-circuits between $\kappa$ and $\phi_z$ (and it is not $\kappa$). Since the vertices of $c_0$ and $c_2$ are distinct, the length of $\phi_z$ is strictly smaller than that of $\kappa$, contradicting assumption \ref{en.tech_1}.

\vspace{0.2cm}

  \emph{Case 1.3: If the vertex adjacent to $c_2$ does not belong to $\kappa$,} then we glue the vertices of $c_0$ and  $c_2$ along the face $z'$ via $M_{\kappa}^{(c_0\;c_2)}$, and remove/glue the edge of $c_2$. We obtain a new map $M_{\kappa}'$ with $\kappa_{\ltimes}$ in the outer face, one edge and one vertex less, and all the faces having even length (as $z'$ had even length and we have shrink it by two units). Further, we have not created any prefacial cross-circuit between $\kappa$ and $\phi_z$, as it would correspond to a (perhaps several) contractible cross-circuit between $\kappa$ and $\phi_z$ in $M_{\kappa}$. No other face length has been altered (in particular, the length of $z$). Thus, the assumptions of Lemma~\ref{lem:technical_1} are met, and we can apply the result by induction. The result follows by composing the vertex gluing $M_{\kappa}^{(c_0\;c_2)}$ with gluing the now duplicate edge containing $c_2$ and the mapping from $M_{\kappa}'$ to $\kappa$. (Note that one of the glued vertices does not belong to $\kappa$, and neither does the edge where $c_2$ belongs, thus the condition that the mapping is the identity over $\kappa$ is preserved.)

\vspace{0.2cm}

  \emph{Case 2: There exists a  prefacial cross-circuit $\lambda$ with $\phi_z\preceq\lambda\preceq\kappa$ and $\phi_z\neq \lambda\neq \kappa$.} 
  Let $\lambda_0$ be the prefacial cross-circuit that is the ``closest'' to $\phi_z$ having the same length as $\kappa$ (perhaps being $\kappa$ itself), that is, 
\begin{itemize}
    \item $\lambda_0\neq \phi_z$ and $\phi_z\prec\lambda_0\preceq\kappa$;
    \item $\ell(\kappa)= \ell(\lambda_0)$ and $\ell(\lambda_0)<\ell(\lambda)$ for every $\lambda$ with $\phi_z\prec\lambda\preceq\lambda_0$.
\end{itemize}
The prefacial cross-circuit $\lambda_0$ induces a graph cycle in $M_{\kappa}$; otherwise, by choosing $\lambda_0'$ to be the cross-circuit of the (minimal) graph cycle all whose crosses belong to $\lambda_0$ and that contains face $z$, we would have $\phi_z\preceq\lambda_0'\preceq\kappa$ and $\ell(\lambda_0')<\ell(\kappa)$, which contradicts assumption \ref{en.tech_4}.\footnote{Note that we are not claiming that such $\lambda_0'$ is inside $\lambda_0$; for instance, if  $\lambda_0'$ is a cycle and $\lambda_0$ is that cycle with a pendant edge inside it, then $\lambda_0$ is inside $\lambda_0'$.}


\vspace{0.2cm}

  \emph{Case 2.1: If $\lambda_0\neq \kappa$,} then $M_{\lambda_0}$ is a strict submap of $M_{\kappa}$ with the same signed genus. Indeed, both maps are plane and $\lambda_0$  induces a graph cycle, and so cutting around $\lambda_0$ in $M_{\kappa}$ is the same as removing all the edges and vertices outside $\lambda_0$ in $M_{\kappa}$. Thus, by induction, there is an endomorphism $M_{\lambda_0}\to \lambda_0$ that maps $\phi_z$ to $\lambda_0$, and is the identity over $\lambda_0$.  Lemma~\ref{lem:lift_homomorphism} allows us to extend this endomorphism to a mapping $h_{\lambda_0}:M_{\kappa}\to M_{\kappa}$ (restricted to $M_{\lambda_0}$ is the endomorphism $M_{\lambda_0}\to \lambda_0$, and is the identity elsewhere). Since $\lambda_0\neq \kappa$, the image $h_{\lambda_0}(M_{\kappa})$ is a strict submap of $M_{\kappa}$ and has $\lambda_0$ as a facial walk. We can apply induction to obtain an endomorphism $h_{\kappa}:h_{\lambda_0}(M_{\kappa})\to \kappa$ that maps $\lambda_0$ to $\kappa$, and is the identity over $\kappa$. The composition $h_{\lambda_0}\circ h_{\kappa}: M_{\kappa} \to \kappa$ has the desired properties.

\vspace{0.2cm} 

  \emph{Case 2.2: Assume now that $\lambda_0=\kappa$.} 
We perform a similar procedure of gluing a vertex and an edge (as in Case~1.3), or simply glue an edge (as in  Case~1.1).
Let $\lambda_1$ be a prefacial cross-circuit with $\phi_z\prec \lambda_1 \prec \kappa$ (it exists by the assumption of Case~2) and such that there is no prefacial cross-circuit $\lambda_2$ with $\lambda_1 \prec \lambda_2\prec \kappa$ (can be assumed as the  the number of prefacial walks is finite). We have $\ell(\lambda_1)>\ell(\kappa)$ (by the properties of $\lambda_0$ and that $\lambda_0=\kappa$ in our case). A small reduction argument (as done for Case~1)
shows that $\lambda_1$ is of the form $(\;\;c_0\;\;\;c_1\;\;\;\cdots\;\;\;c_i\;\;\;c_{i+1}'\;\;\;\cdots\;\;\;c_t'\;\;)$ where the $c_j$'s are crosses of $\kappa$ (assuming we start with $c_0$ for convenience) and $c_{i+1}', \ldots ,c_t'$ are not. The vertex of $c_{i+1}'$ belongs to $\kappa$, yet the vertex adjacent to $c_{i+2}'$ does not (otherwise it would be $\ell(\lambda_1)\leq \ell(\kappa)$). The face of $c_{i+1}$ is also the face of $\alpha_0\alpha_2 c_{i+1}'$ (in fact, $\phi \alpha_0\alpha_2 c_{i+1}'=c_{i+1}$), otherwise there would exist another prefacial cycle between $\lambda_1$ and $\kappa$. Moreover, the face of $c_{i+1}$ is not $z$ as it is not inside $\lambda_1$. Now, we glue the vertices adjacent to $c_{i+2}$ and $c_{i+2}'$ by the riffling $M_{\kappa}^{(\alpha_0\alpha_2 c_{i+1}\; \phi c_{i+1})}$. Observe that, for this new map:
\begin{itemize}
    \item Condition \ref{en.tech_1} holds, as $\kappa$ and $\phi_z$ have not been modified.
    \item The parities of the prefacial cross-circuits between $\phi_z$ and $\kappa$ are the same as $\kappa$ (Observation~\ref{obs.some_obs_contract}).
    \item Condition \ref{en.tech_3} holds since we do a gluing through a face outside $z$ of two vertices at distance two in a face. Further, the parity of the prefacial cross-circuits between $\phi_z$ and $\kappa$ remains the same (using Observation~\ref{obs.some_obs_contract} on both the new and the old map).
    \item Some prefacial cross-circuits between $\phi_z$ and $\kappa$ may have been removed or  modified by reducing their size. The reduction would be of (exactly) two units, and thus  the new prefacial cross-circuits between $\phi_z$ and $\kappa$ satisfy condition \ref{en.tech_4}  with $\leq$ (the hypothesis on Case~2 asked for a strict inequality between their lengths).
    
\end{itemize}
Therefore, we can apply Lemma~\ref{lem:technical_1} inductively to the new map $M_{\kappa}^{(\alpha_0\alpha_2 c_{i+1}\; \phi c_{i+1})}$. Since the two mappings are the identity over $\kappa$ (we are using here Lemma~\ref{lem:lift_homomorphism} for the vertex gluing that we have performed) the composition of gluing a vertex and afterwards performing the inductive step is also a mapping that is the identity over $\kappa$, thus the result follows.
\end{proof}

\subsection{Characterization of map cores}\label{sec:char_cores}

We bring together the notions and  results of the previous sections on cores in the following theorem.

\begin{theorem}\label{thm:core_char}
A connected map $M$ is a core if and only if  each of its prefacial cross-circuits $\lambda$ either is a~facial walk, or~contains more than one face of odd degree, or contains no face whose degree is greater than or equal to~$\ell(\lambda)$ and has the same parity as~$\ell(\lambda)$.

\end{theorem}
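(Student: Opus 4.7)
The plan is to prove the biconditional by establishing both contrapositives. First, if some prefacial cross-circuit $\kappa$ of $M$ fails all three conditions, then combining Lemma~\ref{lem:technical_1} with Lemma~\ref{lem:lift_homomorphism} we construct a non-identity endomorphism of $M$ whose image is a proper submap, showing $M$ is not a core. Second, if $M$ is not a core, then from an epimorphism $M\twoheadrightarrow N$ onto a proper submap we extract a prefacial cross-circuit failing all three conditions.

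For the first direction, choose $\kappa$ to be minimal among prefacial cross-circuits failing the three conditions (say, with $|E(M_\kappa)|$ minimal). The failure of (c) supplies a face $z$ of $M_\kappa$ with $\deg(z)\ge \ell(\kappa)$ of matching parity; I then verify the four hypotheses of Lemma~\ref{lem:technical_1} for this pair. Items (i) and (ii) are immediate from the failure of (a) and (c); item (iii), that every face of $M_\kappa$ other than $z$ and $\kappa_\ltimes$ has even degree, follows from the failure of (b) together with Observation~\ref{obs.parity_faces}, since $z$ is then the unique odd face when $\ell(\kappa)$ is odd and no odd faces exist when $\ell(\kappa)$ is even; item (iv) is where the minimality of $\kappa$ enters: any prefacial cross-circuit $\lambda$ with $\phi_z\preceq\lambda\prec\kappa$ is not bad, so it satisfies one of (a), (b), (c), and a short case analysis (using that $\phi_z\preceq\lambda$ forces $z$ to be contained in $\lambda$, with $\ell(\lambda)$ and $\deg(z)$ of matching parity by Observation~\ref{obs.parity_faces}) yields $\ell(\lambda)\ge\ell(\kappa)$ in each case. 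Applying the lemma produces an endomorphism $M_\kappa\to\kappa_{\text{map}}$ fixing $\kappa$ and $\kappa_\ltimes$, which by Lemma~\ref{lem:lift_homomorphism}\ref{en:lift_hom_1} lifts to an endomorphism of $M$ whose image is $M$ with the interior of $M_\kappa$ removed, hence a proper submap.

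For the second direction, take an epimorphism $M\twoheadrightarrow N$ onto a proper submap $N$; by Lemma~\ref{rem:ext_to_sub} we may assume it fixes $N$ pointwise. Since $N\neq M$, at least one vertex or edge of $M$ lies outside $N$, and by Lemma~\ref{lem:cont_submap}\ref{en.count_sub_3} it lies in $M_\kappa\setminus\kappa$ for some facial walk $\kappa$ of $N$, which by Lemma~\ref{lem:cont_submap}\ref{en.count_sub_1} is a prefacial cross-circuit of $M$; thus (a) fails for $\kappa$. The restriction of the epimorphism to $M_\kappa$ is a map homomorphism onto the submap induced by $\kappa$ fixing $\kappa$, and in particular induces a graph homomorphism from the underlying graph of $M_\kappa$ onto the cycle $C_{\ell(\kappa)}$. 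When $\ell(\kappa)$ is even, $C_{\ell(\kappa)}$ is bipartite, so the underlying graph of $M_\kappa$ is bipartite and all its faces are of even degree, whence (b) fails; when $\ell(\kappa)$ is odd, choosing $\kappa$ innermost (no facial walk of $N$ lying in $M_\kappa\setminus\kappa$) and analyzing the elementary gluings composing the epimorphism forces a unique odd face contained in $\kappa$, so (b) fails. For (c), the face $\kappa^\ltimes$ of $\kappa_{\text{map}}$ has degree $\ell(\kappa)$ and is the image under the epimorphism of the facial walk of at least one interior face of $M_\kappa$; this face must therefore have degree $\ge\ell(\kappa)$ of matching parity, so (c) fails.

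The main obstacle lies in the second direction, specifically in handling odd $\ell(\kappa)$ for condition (b) and in establishing condition (c). The combinatorial behavior of map homomorphisms on faces is subtle: a vertex gluing may split one face into two (same-component case) or merge two faces into one (cross-component case), so source faces do not map cleanly onto target faces. Carefully tracking the effect of the full sequence of vertex and duplicate-edge gluings making up the epimorphism $M_\kappa\to\kappa_{\text{map}}$, and exploiting that the target has only two faces, both of degree $\ell(\kappa)$, is where the key technical work lies.
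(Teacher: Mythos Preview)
Your first direction is sound and matches the paper's route; the only cosmetic difference is that the paper, having fixed a bad prefacial cross-circuit $\lambda$ and the face $z$ witnessing the failure of~(c), takes $\kappa$ with $\phi_z\prec\kappa\preceq\lambda$ of \emph{minimal length} (so hypothesis~(iv) of Lemma~\ref{lem:technical_1} is immediate), whereas you take $\kappa$ bad with $|E(M_\kappa)|$ minimal and then run the case analysis you describe. Both work.

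The gap is precisely where you locate it. Your bipartiteness argument handles~(b) in the even case, but neither the odd case of~(b) nor condition~(c) follows from ``$\kappa^{\ltimes}$ is the image of some interior facial walk'': as you note yourself, faces do not map to faces under a sequence of gluings, so there is no well-defined image to speak of, and an interior face of small degree could in principle be merged up rather than split down.

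The paper closes this by tracking faces through the gluing sequence explicitly. From the epimorphism $h:M\twoheadrightarrow N$ fixing $N$ it extracts the sub-sequence $h'$ of those gluings that involve a vertex or edge in the interior of $\kappa$; this is an epimorphism $M\twoheadrightarrow M'$ onto the submap with the interior of $\kappa$ removed. The faces $f_1,\dots,f_r$ of $M$ contained in $\kappa$ evolve under $h'$ until a single face (of degree $\ell(\kappa)$) remains. This evolution is recorded in a layered labelled DAG $T$: a vertex gluing that splits a face yields two children, both inheriting the parent's label; a duplicate edge gluing absorbs a degree-$2$ face into its neighbour, the child taking the label of the larger parent. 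Two observations finish the argument. First, only degree-$2$ (hence even) faces are ever destroyed, so the number of odd faces cannot decrease; it also cannot increase, for if a subdivision of an even face ever produced two odd faces, these would both survive to the last layer, contradicting that a single face remains. Thus the number of odd faces is constant and equals $1$ or $0$ according as $\ell(\kappa)$ is odd or even, giving~(b). Second, along any path in $T$ of constant label the degree sequence is weakly decreasing (subdivision strictly decreases, merging preserves). Taking the label $f_i$ carried by the unique last-layer vertex and walking back to layer one yields $\deg_M(f_i)\geq\ell(\kappa)$; the parity claim follows since even faces only split into even faces (so an odd terminal degree forces every predecessor to be odd), giving~(c).
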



\begin{proof}
We argue with the contrapositive in both implications.
From left to right, assume that there is a prefacial  cross-circuit $\lambda$ of $M$ such that: (i) is not a facial walk, (ii)  contains at most one face of odd degree, (iii)  contains a face $z$ whose degree is at least $\ell(\lambda)$ and has the same parity as $\ell(\lambda)$.
    
If $\lambda$ has odd length, by (ii) and (iii), $z$ is the only face of odd degree inside $\lambda$, and any prefacial cross-circuit $\mu$ with $\phi_z\preceq\mu\preceq\lambda$ has odd length (see Observation~\ref{obs.some_obs_contract}).
If $\lambda$ has even length, then all the faces inside $\lambda$ have even degree (otherwise, by Observation~\ref{obs.parity_faces}, there would be at least 
two faces of odd degree inside $\lambda$).

Let $\kappa$ be a prefacial cross-circuit such that $\phi_z\prec\kappa\preceq\lambda$ and $\ell(\kappa)=\min\{\ell(\mu):\phi_z\preceq\mu\preceq\lambda\}$
(perhaps being $\lambda$ itself).
The conditions of Lemma~\ref{lem:technical_1} are  met with face $z$ and prefacial cross-circuit $\kappa$. There is thus a homomorphism $h_{\kappa}:M_{\kappa}\to \kappa$ that is the identity over $\kappa$. Since $\kappa$ is a prefacial cross-circuit, by Lemma~\ref{lem:cont_submap}, we can conclude that the submap $M'$ obtained by removing the interior of $\kappa$ (which is the largest submap of $M$ where $\kappa$ is a facial walk)  has the same signed genus as $M$. Further, $\kappa\neq \phi_z$ which implies that $M'$ is a proper submap of $ M$.
Lemma~\ref{lem:lift_homomorphism} (applied to $h_{\kappa}$) then gives a homomorphism $h:M\to M'\subsetneq M$; therefore, $M$ is not a core.

Now, we prove the implication from right to left using the contrapositive again. Assume that $M$ is not a core; we want  to find a prefacial cross-circuit satisfying  conditions (i)--(iii) stated above. 

 Let $N$ be the core of $M$, and let $h:M\to N$ be a homomorphism, whose restriction to $N$ is the identity (see Proposition~\ref{p.prop_cores}\ref{en.core4}).
Since $N$ is a proper submap of $M$ with the same signed genus, Lemma~\ref{lem:cont_submap} asserts that there is a facial walk of $N$ that is a prefacial cross-circuit $\kappa$ of $M$ but not a facial walk of $M$.
Consider again the submap $M'$ of $M$ (defined as above), 
and let  $h':M\to M'$ be the mapping obtained from $h$ as follows:
\begin{itemize}
    \item Vertices $u,v$ are glued if $h$ glues them and either they are in the interior of $\kappa$, or $u$ is in the interior of $\kappa$ and $v$ is in $\kappa$.
    \item Edges $e,e'$ are glued if $h$ glues them and either they are in the interior of $\kappa$, or $e$ is in the interior of $\kappa$ and $e'$ belongs to $\kappa$.
\end{itemize}
Once we cannot perform any of the two previous operations, we stop and obtain $h'$.
Observe that we never glue two vertices or two edges of $\kappa$ as $h$ is the identity over $N$.

We now construct a layered and labelled graph $T$ by matching the above described gluing instructions; the layers will represent the state, at that stage of the process, of the faces in the interior of $\kappa$, which are denoted by $\{f_1,\ldots,f_r\}$.  In the first layer of $T$,  we place $r$ vertices labelled $f_1,\ldots,f_r$ (one vertex per face).
Now, if a vertex gluing subdivides a face (into two new faces), 
we place two new vertices (in the following layer) adjacent to the vertex that represents the original face (the parent); the edges are directed, always pointing towards the new vertices, which are both labelled with the label of the parent. 
If we glue two edges, the two faces that they bound are merged into one; this is represented by a vertex in the new layer, which is adjacent to the parent vertices by directed edges (again pointing towards the ``new''). This new vertex takes the label of the face with the largest degree 
among the two that we are merging (if there is a tie between two faces of degree $2$, we choose the label of one of them arbitrarily). For the remaining vertices (representing faces, including those that have not been subdivided), we place a new vertex in the new layer with the same label as the parent, and with a directed edge pointing from the old vertex to the new one.


The preceding procedure ensures that all the vertices of $T$ bear one of the labels of the original faces inside $\kappa$, that is, $\{f_1,\ldots,f_r\}$. Further, from one layer of $T$ to the next, the process involve some subdivision of faces (when two vertices are glued) and some destruction of faces (when two edges are glued); at the end, only one face remains, represented by a unique vertex in the last layer of $T$, whose label indicates the original face that has not been destroyed/merged-into-another-one.
We next discuss how the parity of the degree of the faces evolve in this process.

When a face of odd degree is subdivided,  a new face of odd degree and a face of even degree appear; if we subdivide a face of even degree, two faces of odd degree or two faces of even degree appear.  Then, no faces of odd degree are created when subdividing even degree faces, as there is only one face at the end of the process, and when an odd degree face is created, actually two are created, and none of them can be removed (we can only remove faces of degree $2$). Further, the vertices of any path in $T$ traversing different layers (from higher to lower layers) have  the property that if their label is the same,  the sequence of degrees of their associated faces is weakly decreasing. Let $f_i$ be the label of the unique vertex in the last layer, and consider a path with all vertices labelled $f_i$ that goes from the first to the last layer; we conclude that: 
\begin{itemize}
    \item the parity of $f_i$ is the same as $\kappa$ (it is maintained through the process, and such facial walk is modified until it becomes $\kappa$);
    \item the degree of the other faces in $M$ is even, and the degree of $f_i$ is larger or equal than $\ell(\kappa)$.
    \end{itemize}
Thus, $\kappa$ is a prefacial cross-circuit of $M$ satisfying conditions (i)--(iii); 
this finishes the contrapositive of the implication from right to left and completes the proof of the whole statement. 
\end{proof}

\paragraph{Application of the characterization to four classes of maps.} A {\em bouquet} is a map with one vertex, all edges being loops. Dually, a {\em quasi-tree} is a map with one face, also known as a unicellular map. (Bouquets and quasi-trees are necessarily connected.)
Theorem~\ref{thm:core_char} says that: a bouquet is a core if and only if it has no duplicate edges; a quasi-tree is a core if and only if either it is plane and consists of one edge or one isolated vertex, or it is not plane and has no vertices of degree one.
(These two facts can be shown directly.)
For the other two classes of connected maps, we exploit the parity conditions for $\kappa$ in Theorem~\ref{thm:core_char} to conclude the following.
A connected map with no vertices of degree one and all faces of odd degree is a core. 
Any plane connected core map satisfies exactly one of the following properties: (i) consists on a single edge or a single isolated vertex (when all faces have even degree and thus the underlying graph is bipartite by Observation~\ref{obs.some_obs_contract}), or (ii) is a cycle of odd length (thus having precisely two faces, both of odd degree), or (iii) has at least four odd degree faces (four instead of three using Observation~\ref{obs.parity_faces}).

\paragraph{Graph cores vs.~map cores.}
Since a map homomorphism is a graph homomorphism, any embedding of a graph core is a map core. 
However, there are map cores for which the underlying graph is not a graph core, for example, the quasi-tree embedding of a vertex with two loops on a torus. 
Further, the characterization of quasi-trees as map cores (stated in the previous paragraph) gives a sharp contrast between map cores and graph cores. Indeed, the fact that any connected graph has a quasi-tree embedding in some surface, possibly non-orientable~\cite[Page~123, Third paragraph]{edmonds1965surface}, 
yields that any connected graph without vertices of degree one can be the underlying graph of a map core by embedding it in an appropriate surface.


On the other side, identifying two vertices by a graph homomorphism can be seen as gluing two vertices by a map homomorphism, using an appropriate embedding (placing a face between the two vertices to be glued); to deal with parallel edges, the graph is embedded so that they are duplicated. Therefore, any graph homomorphism can be viewed as the composition of several map homomorphisms (perhaps by changing the embeddings of the corresponding underlying graphs).


This discussion prompts a natural question on whether map cores can be used to identify graph cores:
for every connected graph $\Gamma$ that is not a core, is  it possible to find a map $M$ with underlying graph $\Gamma$ such that $M$ is not a (map) core?

We answer this question in the negative. Let $\Gamma$ be the graph resulting from identifying two copies of the complete graph $K_5$ by a vertex (it has 9 vertices and no additional edges). The core of $\Gamma$ is $K_5$. In fact, all graph homomorphisms from $\Gamma$ leading to a subgraph (not necessarily a core) are obtained by mapping one of the copies of $K_5$ into the other. Now,  suppose that there exists a map $M$ with underlying graph $\Gamma$ that is not a map core. We can apply the arguments used in the proof of Theorem~\ref{thm:core_char} to conclude that there exist a prefacial cross-circuit $\kappa$ (which is not a facial walk) and a homomophism from $M$ to a strict submap of $M$ that maps the interior of $\kappa$ to $\kappa$. In particular, this homomorphism is a graph homomorphism from $\Gamma$ to a subgraph that must be $K_5$. Therefore, the underlying graph of the interior of $\kappa$ should contain $K_5$, which is a contradiction as the interior of $\kappa$ is a plane map.

\subsection{The poset of map cores}\label{sec:cores_poset}

Having now described how to tell whether a map is a core, we introduce and study the poset of map cores, comparing its properties with those of the poset of graph cores.

For fixed $g\in \mathbb{Z}$, let $\mathcal{M}(g)$ and $\mathcal{M}_{c}(g)$ be the sets of, respectively, maps and cores of signed genus~$g$. For $M,M'\in\mathcal{M}(g)$, we say that $M\leq M'$ if there exists a map homomorphism from $M$ to $M'$, that is, if $M\to M'$.
The associated strict order relation, $M<M'$, holds if $M\to M'$ and $M'\not \to M$. 
The relation $\leq$ is defined analogously on $\mathcal{M}_{c}(g)$; indeed, it is obtained from the preorder $(\mathcal{M}(g), \leq)$ upon quotienting out by the equivalence relation $\cong$ defined by $M\cong M'$ if $M\leq M'$ and $M'\leq M$.  

\begin{lemma}
The relation $\leq$ defines a preorder on $\mathcal{M}(g)$ and a partial order on $\mathcal{M}_{c}(g)$.
\end{lemma}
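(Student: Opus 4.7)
The plan is to verify the three order axioms in turn: reflexivity, transitivity, and (for cores) antisymmetry. For reflexivity, I will simply note that the identity mapping $M \to M$ is a homomorphism (its constituent epimorphism and monomorphism are both identities), so $M \leq M$ for every $M \in \mathcal{M}(g)$. For transitivity, I will invoke the construction of composition of homomorphisms given at the end of Section~\ref{sec:defining_map_homomorphism}: if $h: M \to M'$ and $h': M' \to M''$ are homomorphisms between maps of the same signed genus, their composition $h' \circ h: M \to M''$ is also a homomorphism, so $M \leq M'$ and $M' \leq M''$ imply $M \leq M''$. These two facts establish that $\leq$ is a preorder on $\mathcal{M}(g)$.

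For the partial order statement, the remaining task is antisymmetry on $\mathcal{M}_c(g)$: given cores $M, M' \in \mathcal{M}_c(g)$ with $M \leq M'$ and $M' \leq M$, I need to show $M$ and $M'$ are isomorphic (so that they represent the same element of the quotient). The key observation is that both homomorphisms must in fact be epimorphisms. Indeed, let $h: M \to M'$ decompose as $h = m \circ n$ with $n: M \twoheadrightarrow N$ an epimorphism onto a submap $N$ of $M'$, and let $h': M' \to M$ be the homomorphism in the reverse direction. Composing yields a homomorphism $n \circ h': M' \to N$ where $N$ is a submap of $M'$. Since $M'$ is a core, no homomorphism from $M'$ maps into a proper submap of $M'$, so $N = M'$ and $h$ is an epimorphism. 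By the symmetric argument, $h'$ is also an epimorphism.

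Having two epimorphisms $M \twoheadrightarrow M'$ and $M' \twoheadrightarrow M$, Proposition~\ref{prop:epi_mono_iso} immediately gives that $M$ and $M'$ are isomorphic. Alternatively, one can appeal to uniqueness of the core (Proposition~\ref{p.cu}): since $M'$ is mapped onto by $M$ and is itself a core, $M'$ serves as a core of $M$, and likewise $M$ serves as a core of $M'$; by uniqueness both equal the common core, so $M \cong M'$. This completes antisymmetry. I do not anticipate a genuine obstacle here since every ingredient is already assembled in earlier sections; the only point requiring a little care is the step showing that mutual homomorphisms between cores are automatically epimorphisms, and this falls out cleanly from the defining minimality of cores.
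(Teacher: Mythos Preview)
Your proof is correct and follows essentially the same approach as the paper's. The paper's own argument is very terse—citing composition of homomorphisms for transitivity and Proposition~\ref{p.cu} for antisymmetry—whereas you spell out explicitly why mutual homomorphisms between cores must be epimorphisms before invoking Proposition~\ref{prop:epi_mono_iso}; this extra step is a genuine clarification of what the paper leaves implicit.
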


\begin{proof}
The preorder part follows by Proposition \ref{prop:epi_mono_iso} and the fact that the composition of homomorphisms between maps in $\mathcal{M}(g)$ is again a homomorphism.  For cores, Proposition~\ref{p.cu} ensures that the preorder is in fact a partial order, since we are considering maps up to  isomorphism.
\end{proof}

The first property that we establish is that the poset  $(\mathcal{M}_{c}(g), \leq)$ is connected, by first showing that the preorder $(\mathcal{M}(g), \leq)$ is connected. Recall that a preorder $(P,\leq)$ is connected if for every pair of elements $a,b$ there is a sequence $a=c_0, c_1, \dots, c_{k-1}, c_k=b$ such that $c_i\leq c_{i+1}$ or $c_{i+1}\leq c_i$ for each $i=0,1,\dots, k-1$. 

\begin{lemma} \label{lem:core_connection}
Given connected maps $M$ and $M'$ of the same signed genus and with at least one edge, 
$M'$ can be obtained from $M$ by a sequence of vertex gluings,  
vertex splittings (Definition~\ref{def:vertex_splitting} with the extra condition of Lemma~\ref{lem:riff_signedgenus}\ref{en:riff_signedgenus_2}(i)), 
edge splittings, 
and duplicate edge gluings. 
\end{lemma}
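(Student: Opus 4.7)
The strategy is to identify, for each signed genus $g$, a canonical one-vertex ``bouquet'' map $B_g$, and to show that every connected map of signed genus $g$ with at least one edge can be reduced to $B_g$ via a sequence of the four permitted operations. Since vertex gluing (case (i) of Definition~\ref{def:vertex_gluing}) is precisely the inverse of the restricted vertex splitting, and edge splitting (Definition~\ref{def:edge_splitting}) is precisely the inverse of duplicate edge gluing (by Remark~\ref{lem:duplicate_iso}), every such reduction is reversible within the allowed list. Consequently, concatenating a reduction $M \rightsquigarrow B_g$ with the reverse of $M' \rightsquigarrow B_g$ delivers the required transformation $M \rightsquigarrow M'$.

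The reduction to $B_g$ proceeds by alternating two phases. In the \emph{face-merging} phase, while the current map has more than one face, the connectedness of the dual ensures that some vertex $v$ is incident with two distinct faces $x,y$; fixing crosses $a,b$ coincident with $v$ but incident with $x$ and $y$ respectively, the vertex splitting $M^{(a\;b)}$ (satisfying the condition of Lemma~\ref{lem:riff_signedgenus}\ref{en:riff_signedgenus_2}(i)) merges $x$ and $y$ into a single face; by Lemma~\ref{lem:vs_imp}\ref{en:vs_imp1} this preserves both connectedness and signed genus. In the \emph{vertex-merging} phase, starting from a unicellular map with at least two vertices, pick any link $e$ with distinct endpoints $u,v$; both sides of $e$ lie on the unique face, so the two crosses of $e$ incident with $u$ and $v$ allow a vertex gluing (case (i) of Definition~\ref{def:vertex_gluing}), which identifies $u$ and $v$ while preserving signed genus and connectedness by Lemma~\ref{lem:riff_signedgenus}\ref{en:riff_signedgenus_1}(i). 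Choosing the edge $e$ so as not to undo the previous phase (for example, a link outside the cycle of loops already consolidated), alternation of the two phases terminates in a one-vertex, one-face map when $g\neq 0$, or in a minimal plane map (a single loop or a single link, which are interconvertible by one vertex gluing/splitting) when $g=0$.

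The final and main task is to show that any two bouquets of the same signed genus are interconnected by the operations, so that $B_g$ may be taken to be a fixed canonical form (for instance, Tutte's $a_1 b_1 \overline{a_1}\overline{b_1}\cdots a_\gamma b_\gamma \overline{a_\gamma}\overline{b_\gamma}$ in the orientable case $g=\gamma$, or a sequence of twisted loops in the non-orientable case). This is the combinatorial core of the classification of surfaces translated into the present formalism. Each elementary move on the polygonal scheme (swapping the cyclic positions of two loops, replacing two cross-caps and a handle by three cross-caps, pairing up parallel segments, etc.) is realized by a finite composition of our four operations: an edge splitting inserts an auxiliary degree-two face, vertex gluings and splittings along this face permute the cyclic order of the loops meeting the split vertex, and a duplicate edge gluing finally removes the auxiliary face. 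Each intermediate map remains connected and of signed genus $g$ by the lemmas in Section~\ref{sec:vertex_gluing}, so the whole composition is admissible.

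The principal obstacle is this bouquet-normalization step: explicitly writing each classification move as a composition of the four operations and checking preservation of connectedness and signed genus at every intermediate stage. The first two phases are essentially monotone in $\vv+\ff$ and terminate routinely; the third phase requires a finite case analysis depending on the parity and orientability of~$g$, but it follows the same template used in Tutte's surface classification via contractions of links that are dual links or bridges combined with vertex splittings.
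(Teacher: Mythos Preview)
Your overall strategy---reduce each map to a canonical bouquet and then reverse---matches the paper's. The gap is in the reduction itself.

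First, the termination claim is wrong: vertex gluing under condition~\ref{en:vertex_gluing_1} of Definition~\ref{def:vertex_gluing} sends $(\vv,\ff)\mapsto(\vv-1,\ff+1)$, and the restricted vertex splitting of Lemma~\ref{lem:riff_signedgenus}\ref{en:riff_signedgenus_2}(i) sends $(\vv,\ff)\mapsto(\vv+1,\ff-1)$. Thus $\vv+\ff$ is \emph{invariant} under both phases, not monotone, and your alternation as described simply oscillates.

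Second, and more seriously, none of the moves you actually invoke changes $\ee$. Vertex gluings and splittings preserve edge count; your bouquet-normalization moves are each an edge splitting followed by a duplicate edge gluing, so they are edge-count-neutral as well. But $M$ and $M'$ may have different numbers of edges (e.g.\ two quasi-trees of the same genus with different edge counts), so your procedure cannot connect them. The unitary canonical map has a specific number of edges determined by Euler's formula, and you have no way to reach it from a map with more edges.

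The paper's fix is to realize \emph{link contraction} (which decreases both $\vv$ and $\ee$ by one while preserving $\ff$ and signed genus) as a four-step composite of the permitted operations: glue the endpoints of the link (making it a loop and creating a second face), split the vertex to produce a leaf, glue the leaf back to create a pair of duplicate edges, then glue the duplicates. This composite genuinely removes an edge, so iterating it after the face-merging phase drives the map down to the unitary canonical form. Tutte's vertex splitting, the inverse of link contraction, is then available too, and with these two compound moves the paper simply invokes Tutte's Theorem~X.37 rather than redoing the polygonal-scheme case analysis you sketch.
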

\begin{proof}
Vertex gluing and vertex splitting under the conditions indicated are inverse operations, as are duplicate edge gluing and edge splitting, and all these operations preserve signed genus and connectivity.   A connected map with more than one face has some edge that is a dual link; dually, a connected map with more than one vertex has some edge that is a link. 
For a dual link  $e\equiv (\;c \;\;\alpha_2\alpha_0 c\;)\;(\;\alpha_2 c\;\; \alpha_0\alpha_2c\;)$, crosses $c$ and $\alpha_2c$ belong to different faces. Splitting the vertex containing crosses $c$ and $\alpha_2 c$ by riffling $c$ and $\tau c$  
has the effect of adding a vertex and removing a face (so that all the crosses of $e$ now belong to a single face), while the signed genus and the connectivity are preserved, so the number of faces is reduced. 
This procedure can be repeated until the map becomes a quasi-tree. If this quasi-tree has more than one vertex, by contracting links we can reduce the number of vertices, while preserving signed genus (Lemma~\ref{lem:effect_del_sg}\ref{en:effect_del_sg_2}) and connectivity, thus producing no new faces; the quasi-tree can thus be made simultaneously into a bouquet by a sequence of such link contractions. Such a map with one vertex and one face is called by Tutte a {\em unitary map}. If the map has at least two edges, contracting a link $e\equiv (\;c \;\;\alpha_2\alpha_0 c\;)\;(\;\alpha_2 c\;\; \alpha_0\alpha_2c\;)$ can be realized by a sequence of vertex gluings/vertex splittings (Definition~\ref{def:vertex_splitting} with the extra condition of Lemma~\ref{lem:riff_signedgenus}\ref{en:riff_signedgenus_2}(i)) and duplicate edge gluings/edge splittings as follows: (1) glue the two vertices incident with $e$ by riffling crosses $c$ and $\phi c$, making $e$ become a loop; then (2) split the vertex incident with this loop by riffling $c$ and $\alpha_2\alpha_0 c$, thereby creating a leaf vertex; then (3) glue this leaf vertex to the vertex containing $\phi^{-1}c$ by riffling $\alpha_0\alpha_2 c$ and $\phi^{-1}c$, thereby creating a pair of duplicate edges; and finally (4) glue these duplicate edges. 

 We have just seen how any connected map with at least one edge can be taken to a unitary map by a sequence of vertex gluings/vertex splittings and duplicate edge gluings/edge splittings. 
The  \emph{orientable canonical map} of genus $g$ is the unitary map whose vertex permutation has a cycle of the form\footnote{When $g=0$ the canonical map is an isolated vertex and the vertex permutation consists of a pair of empty cycles; as Tutte does not allow edgeless maps, he defines the canonical map of genus zero to be the map consisting of a single link.}
$(\;\;c_1\;\;\; c_2 \;\;\; \alpha_0\alpha_2 c_2\;\;\; \alpha_0\alpha_2 c_1 \;\;\cdots \;\;c_{2g-1}\;\;\; c_{2g} \;\;\; \alpha_0\alpha_2 c_{2g}\;\;\; \alpha_0\alpha_2 c_{2g-1}\;\;),$
and the \emph{non-orientable canonical map} of genus $g$ has vertex permutation with a cycle of the form
\[(\;\;c_1\;\;\; \alpha_0 c_1 \;\;\;c_2\;\;\; \alpha_0 c_2 \;\;\cdots \;\; c_g\;\;\; \alpha_0 c_g\;\;).\]


As remarked after Definition~\ref{def:vertex_splitting}, Tutte defines an operation of vertex splitting~\cite[Figs. X.7.1 and X.7.2]{tutte01} related to ours in following our vertex splitting operation by the insertion of a link;  
this makes his operation inverse to that of contracting a link. Thus, to realize Tutte's vertex splitting operation we take the inverse sequence of four operations above that realized the operation of link contraction. In other words, start with the inverse of step (4) above (split an edge to make a duplicate edge), follow this by the inverse of step (3) (split an endpoint of the duplicate edge to make a leaf vertex), then carry out the inverse of step (2) (glue the leaf vertex to the vertex incident to it), finishing with the inverse of step (1) (split the endpoint of the loop).  
Tutte showed \cite[Theorem~X.37]{tutte01} that, using vertex splittings in his sense and link contractions, one can transform any unitary map into a canonical map while maintaining the signed genus.\footnote{The crosses belonging to a \emph{cross cap} (twisted loop appearing as a pair $c, \alpha_0 c$ in a cycle of the vertex permutation) or to a \emph{handle} (two  interlaced non-twisted loops $a, b, \alpha_0\alpha_2 a, \alpha_0\alpha_2 b$ in a cycle of the vertex permutation) appear consecutively in the canonical map, or, to use Tutte's term, they are \emph{assembled}: Tutte in this way gives a combinatorial  proof of the classification theorem for compact surfaces.}
Hence the same is true when using vertex gluings and splittings in our sense, along with edge splittings and duplicate edge gluings, as these operations can simulate the two operations used by Tutte in the way described above. 

Finally, then, we compose the sequence of operations that reduces $M$ to the canonical map of its signed genus with the inverse of the sequence of operations reducing $M'$ to the same canonical map, which takes the canonical map to $M'$. In this way we move from $M$ to $M'$ via the canonical map of their signed genus by a sequence of operations 
of the type described in the lemma statement. 
\end{proof}

\begin{theorem} \label{thm:cores_connected}
The poset  $(\mathcal{M}_{c}(g), \leq)$ is connected.
\end{theorem}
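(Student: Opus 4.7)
The plan is to reduce everything to Lemma~\ref{lem:core_connection} together with Proposition~\ref{p.prop_cores}\ref{en.core3}. Given two connected cores $N, N' \in \mathcal{M}_{c}(g)$, I aim to build a zigzag
\[N = L_0 \leftrightarrows L_1 \leftrightarrows \cdots \leftrightarrows L_k = N'\]
of homomorphisms in $\mathcal{M}_{c}(g)$. The guiding observation is that each of the four operations appearing in Lemma~\ref{lem:core_connection} is either an epimorphism or has an epimorphism as its inverse, so any single-step transformation between connected maps translates into a homomorphism in one of the two directions.

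Assuming first that both $N$ and $N'$ have at least one edge, Lemma~\ref{lem:core_connection} produces a sequence $N = M_0, M_1, \dots, M_k = N'$ of connected maps of signed genus $g$ in which $M_{i+1}$ is obtained from $M_i$ by a single vertex gluing, vertex splitting (under the condition of Lemma~\ref{lem:riff_signedgenus}\ref{en:riff_signedgenus_2}(i), so it inverts a vertex gluing), duplicate edge gluing, or edge splitting. By Definition~\ref{def:epimorphism_gluing}, vertex gluings and duplicate edge gluings define epimorphisms, hence homomorphisms; the two splitting operations are exactly their inverses and therefore provide homomorphisms pointing the other way. This yields a zigzag of homomorphisms $M_0 \leftrightarrows M_1 \leftrightarrows \cdots \leftrightarrows M_k$ in the preorder $(\mathcal{M}(g), \leq)$.

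I would then invoke Proposition~\ref{p.prop_cores}\ref{en.core3}: whenever $M_i \to M_{i+1}$ (or conversely), there is a corresponding homomorphism between their cores. Setting $L_i$ to be the core of $M_i$ and using that $L_0 \cong N$ and $L_k \cong N'$ (Proposition~\ref{p.prop_cores}\ref{en.core1} together with uniqueness of the core, Proposition~\ref{p.cu}) delivers the desired zigzag in $\mathcal{M}_{c}(g)$.

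It remains to dispose of the edgeless case, which only arises when $g=0$: a connected core without edges is the isolated-vertex map $K_1$. Here I would note that $K_1$ is isomorphic to an induced submap of the single-link map $K_2$ (both of signed genus $0$), so this inclusion is a monomorphism $K_1 \rightarrowtail K_2$ and hence $K_1 \leq K_2$ in the poset. Since $K_2$ has an edge, it is already linked to every other connected core of genus $0$ by the previous argument, which transitively connects $K_1$ to the rest of $\mathcal{M}_{c}(0)$. The main obstacle in the proof has really been absorbed into Lemma~\ref{lem:core_connection} (whose proof rests on Tutte's classification theorem for surfaces via canonical maps); what the theorem then needs is essentially the functorial bookkeeping provided by Proposition~\ref{p.prop_cores}\ref{en.core3}, plus this small edgeless checkpoint.
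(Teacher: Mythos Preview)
Your proof is correct and follows essentially the same route as the paper: invoke Lemma~\ref{lem:core_connection} to obtain a zigzag of epimorphisms in $\mathcal{M}(g)$, then pass to cores via Proposition~\ref{p.prop_cores}\ref{en.core3}. The only cosmetic difference is in the edgeless case: the paper simply remarks that the isolated vertex maps everywhere, while you route it through $K_2$, which is equally valid.
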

\begin{proof}
By Lemma~\ref{lem:core_connection}, for each pair of maps $M, M'\in\mathcal M(g)$ with at least one edge there is a sequence of maps $M_0=M, M_1, M_2, \dots, M_k=M'$ such that there is either a homomorphism from $M_i$ to $ M_{i+1}$ (when $M_{i+1}$ is obtained from $M_i$ by vertex gluing or duplicate edge gluing) or from $M_{i+1}$ to $M_i$
(when $M_{i+1}$ is obtained from $M_i$ by vertex splitting or edge splitting); all the operations are the ones described in Lemma~\ref{lem:core_connection}.
With the additional observation that the single vertex can be mapped everywhere in the case of $g=0$, this shows that
the preorder $(\mathcal{M}(g), \leq)$ is connected; then, by considering the corresponding homomorphisms between their cores, we obtain that the poset $(\mathcal{M}_{c}(g), \leq)$ is also connected.
\end{proof}

We recall some further definitions from poset theory. Let $a,b$ be elements of a poset $(P,<)$ such that $a<b$. The poset $P$ is \emph{dense} between $a$ and $b$
if for any $c,d\in P$ with $a\leq c < d\leq b$, there exists an element $e$ such that $c<e<d$; the pair $a$, $b$ forms a \emph{gap} in $P$ if there is no element $c$ with $a<c<b$ (alternatively, $b$ covers  $a$, i.e., $b$ is an immediate successor of $a$).
In the poset of (cores of) graphs with order $G\leq H$ defined by the existence of a graph homomorphism $G\to H$, each comparable pair of cores (with the exception of the one between the vertex and $K_2$) defines an interval that is a dense total order (actually, such interval is universal, in the sense that every countable partial order can be seen as one of its suborders \cite{FIALA2017101}).
The following theorem shows that the partial order of map cores is drastically different from that of graph cores.

\begin{theorem}\label{thm:no_dense_order_cores}
In the partial order $(\mathcal{M}_{c}(g), \leq)$, there is no pair of cores $N_1, N_2$ with $N_1<N_2$ and a dense total order between them.
\end{theorem}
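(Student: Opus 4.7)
The plan is to argue by contradiction. Suppose that cores $N_1 < N_2$ admit a dense total order of cores strictly between them, i.e., the set $(N_1,N_2)_c := \{M \in \mathcal{M}_c(g) : N_1 < M < N_2\}$ is a densely ordered chain. Density then produces an infinite strictly ascending sequence $M_1 < M_2 < M_3 < \cdots$ in $(N_1,N_2)_c$ together with witnessing homomorphisms $f_i \colon M_i \to M_{i+1}$. First I would stabilize the images under homomorphisms to $N_2$: for any $i < k$, setting $h_i := h_k \circ (f_{k-1} \circ \cdots \circ f_i)$ gives a consistent family $h_i \colon M_i \to N_2$ whose images $H_i := h_i(M_i) \subseteq N_2$ satisfy $H_i \subseteq H_k$, and since $N_2$ has only finitely many submaps of signed genus $g$, I can pass to a cofinal subchain on which all images coincide with a fixed submap $H \subseteq N_2$.

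Next I would reduce to the case $H = N_2$. Let $c$ be the core of $H$: it is a core submap of $N_2$, and the composition $M_i \to H \to c$ shows $M_i \leq c$ in the core poset for every $i$. If $c < N_2$ in the core poset, then $c \in (N_1,N_2)_c$ serves as a strictly smaller upper bound; since sub-intervals of a dense total order are themselves densely ordered, I restart the argument inside $(N_1,c)_c$. Because $N_2$ has only finitely many core submaps, this iteration terminates after finitely many steps, leaving the case $c = N_2$, equivalently $H = N_2$, in which every $M_i$ maps \emph{epimorphically} onto $N_2$. Since $M_i$ is a core it has no duplicate edges (otherwise gluing such a pair would give a homomorphism from $M_i$ to a proper submap of itself, contradicting the core property), so every epimorphism $M_i \twoheadrightarrow N_2$ must begin with a vertex gluing; in particular $\vv(M_i) > \vv(N_2)$ for every $i$.

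The hard part is finishing this reduced case using the characterization of cores (Theorem~\ref{thm:core_char}). Each $M_i$ arises from $N_2$ by a sequence of vertex splittings and edge splittings (the inverses of the gluings in the epimorphism $M_i \twoheadrightarrow N_2$), and Theorem~\ref{thm:core_char} forces every prefacial cross-circuit of $M_i$ either to be a facial walk, to contain more than one odd-degree face, or to contain no face of degree $\geq \ell$ of matching parity. Moreover, by the commutation lemmas for vertex and edge gluings established earlier, the chain homomorphism $M_i \to M_{i+1}$ must respect the splitting history up to reordering of gluings, so the entire chain corresponds to a single refinement history of splittings of $N_2$. I would argue that the discrete combinatorial nature of such splittings — where each intermediate core requires introducing a new prefacial cross-circuit consistent with all existing core constraints — forces a covering relation $M_i \lessdot M_{i+1}$ for some consecutive pair in the chain, contradicting density. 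The main obstacle is converting Theorem~\ref{thm:core_char} from a qualitative core test into a quantitative restriction on the allowed splittings of a fixed map: since prefacial cross-circuits can overlap and interact intricately under composed homomorphisms, the careful bookkeeping of how they are created and destroyed by individual splittings, while preserving the core property at every intermediate step, is the principal combinatorial challenge.
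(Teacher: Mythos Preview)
Your reduction steps (stabilizing the image in $N_2$ and iterating down to the case where every $M_i$ surjects onto $N_2$) are plausible, though the bookkeeping is loose: the definition of the $h_i$ is circular as written, and when you restart with $c$ in place of $N_2$ you need to verify that the relevant sub-chain is still dense (it is, but this deserves a sentence). More importantly, this reduction is not the paper's route and does not by itself buy much.

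The genuine gap is your ``hard part''. You have not given an argument, only a hope: that because each $M_i$ is obtained from $N_2$ by splittings, and each is a core, some pair must be adjacent in the order. The claim that ``the entire chain corresponds to a single refinement history of splittings of $N_2$'' is unjustified: the epimorphisms $M_i \twoheadrightarrow N_2$ need not be compatible with the chain maps $f_i \colon M_i \to M_{i+1}$, and the commutation lemmas for gluings do not give you a coherent inverse system of splittings. Even granting such a history, ``discrete combinatorial nature'' is not an argument; nothing you have said rules out infinitely many intermediate cores, and the core characterization of Theorem~\ref{thm:core_char} does not obviously bound the number of splittings one can perform while remaining a core.

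The paper's proof is entirely different and avoids this structural bookkeeping. It tracks numerical invariants of faces that are controlled under homomorphism. First, the number of odd-degree faces is non-decreasing along the chain and bounded by the number in $N_2$, so it stabilizes on a sub-interval $[N_3,N_4]$. Second, on that interval, odd faces are in bijection under every homomorphism and (using Theorem~\ref{thm:core_char} on the target core) their degrees can only weakly decrease; hence they stabilize on a further sub-interval $[N_5,N_6]$. Third, the even-degree faces: under a homomorphism $N \to N'$ between cores in this interval, an even face of $N$ becomes a prefacial cross-circuit of $N'$ containing only even faces, and by Theorem~\ref{thm:core_char} each such contained face has strictly smaller degree than the circuit. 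Since face degrees are bounded below, this subdivision process terminates, forcing the multiset of face degrees to stabilize. But only finitely many maps have a given face-degree profile, so the interval $[N_5,N_6]$ is finite, contradicting density. The key idea you are missing is this monotonicity-and-strict-decrease argument on face degrees, which converts the qualitative core criterion into a terminating numerical descent.
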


\begin{proof}
Suppose for a contradiction that there is a dense total order between two cores $N_1, N_2\in \mathcal{M}_{c}(g)$, and assume first that both are connected. Thus, there are infinitely many cores in between $N_1$ and $N_2$, and there must exist two cores $N_3,N_4\in \mathcal{M}_{c}(g)$ with $N_1<N_3<N_4<N_2$ such that every core between them (including them) have the same number of odd-degree faces\footnote{$N_2$ has a finite  number of odd-degree faces, and the total number of odd-degree faces is non-decreasing when taking homomorphic images: removal of parallel edges preserves their degree, while vertex gluing always produces one odd-degree face from an odd-degree face, and either zero or two odd-degree faces from an even-degree face.}, and there is a dense total order between them. 
 Let $\mathcal{N}_{3,4}$ be the set of cores between $N_3$ and $N_4$.

\vspace{0.15cm}

\noindent \textbf{Claim:} There exist two cores $N_5, N_6\in \mathcal{N}_{3,4}$ with $N_5<N_6$ such that:
\begin{itemize}
    \item there is a dense total order of cores from $\mathcal{N}_{3,4}$ in between $N_5$ and $N_6$, and
    \item for each pair of cores $N'<N''$ in between $N_5$ and $N_6$ (including them) in the total order, there is a homomorphism $N'\to N''$ that induces a one-to-one correspondence between faces of odd degree and preserves their degree.
\end{itemize}

\begin{proof}[Proof of the claim] Let $N,\widetilde{N}\in \mathcal{N}_{3,4}$ with $N<\widetilde{N}$, and let $h: N\to \widetilde{N}$. The image under $h$ of each odd-degree face of $N$  is a face that induces a prefacial cross-circuit $\kappa$ in $\widetilde{N}$ of length smaller or equal than its degree (here we use that $h(N)$ is a submap of $\widetilde{N}$ and Lemma~\ref{lem:cont_submap}; note that the image of a face does not induce a prefacial cross-circuit, but rather a union of prefacial cross-circuits, we select one of them). Each of these $\kappa$'s contains exactly one face $z_{\kappa}$  of odd degree in $\widetilde{N}$ as, by Observation~\ref{obs.parity_faces}, it should contain at least one, and it cannot contain more 
since the number of odd faces in $\widetilde{N}$ would be strictly larger than that in $N$, a contradiction with $N,\widetilde{N}\in \mathcal{N}_{3,4}$.
Since $\widetilde{N}$ is a core, by Theorem~\ref{thm:core_char}, either $\phi_{z_{\kappa}}=\kappa$,  or the degree of $z_{\kappa}$  is strictly smaller than $\ell(\kappa)$.
As there are infinitely many cores in $\mathcal{N}_{3,4}$, totally ordered,  these degrees
 stabilize between two cores, which are the desired $N_5,N_6$. These cores $N_5,N_6$ can be further assumed to contain a dense total order between them, as there is a dense total order between $N_3$ and $N_4$.
\end{proof}

\noindent Let $\mathcal{N}_{5,6}$ be the set of cores totally ordered in between $N_5$ and $N_6$. We next prove that there are two cores in $\mathcal{N}_{5,6}$ with finitely many cores in between them, which contradicts the assumption that there is a dense total order between $N_1$ and $N_2$.

 Let $h: N\to N'$ where $N<N'$ are two cores in $\mathcal{N}_{5,6}$. 
Each even-length facial walk $\lambda$ of $N$ turns into several even-length facial walks in $h(N)\subseteq N'$ whose total length is at most $ \ell(\lambda)$ (it may happen that the face completely disappears); this follows from the fact that no even-degree face can be subdivided (applying the sequence of vertex and duplicate edge gluings) to create two odd-degree faces. These even-degree faces in $h(N)$ become prefacial cross-circuits $\kappa$ in the core $N'$. If $\kappa$ is not a facial walk, by Theorem~\ref{thm:core_char}, the faces in the interior of $\kappa$ have even degree strictly smaller than $\ell(\kappa)$. Therefore, when we consider a homomorphism from $N_5$ into a core in $\mathcal{N}_{5,6}$, its image is obtained by either reducing the number of faces (gluing duplicated edges) or subdividing  even-degree faces into smaller even-degree faces (gluing vertices); one can place within these faces a plane submap with all the faces of strictly smaller even degree  (we use that the codomain is another core, and Theorem~\ref{thm:core_char} where the prefacial cross-circuit $\kappa$ is a part of a facial walk from $N_5$).
Since there is an absolute minimum on the degree of a face, this process cannot be performed infinitely many times.
\footnote{When fixing the image of a homomorphism, the number of faces themselves is finite, even though we could fill a given face with arbitrarily many configurations that are cores using faces of strictly smaller degree; this arbitrary number will be fixed again, and all those inner faces will have strictly smaller degree. Thus, this procedure cannot be repeated infinitely many times once that arbitrary number is fixed and a particular collection of cores is chosen.} Hence, at some point, the number and the degree of the even-degree faces is stabilized. Since we have also stabilized the number of odd-degree faces and their respective degree, and there is only a finite number of maps with such parameters, we conclude that the interval between $N_5$ and $N_6$ is not dense.  This finishes the proof when the cores $N_1,N_2$ are connected.

Suppose now that the cores $N_1,N_2$ are not connected; we argue similarly on the connected components as they are cores and a map homomorphism can only merge them. Since the signed genus is fixed and the Euler genus is additive along connected components, up to plane connected components, we can find two cores $N_3, N_4$ with $N_1<N_3<N_4<N_2$ such that the profile of connected components with different signed genuses (different from $0$) is the same, and it is the same with all the cores in-between. Regarding the plane connected components, we next use an argument on the number of odd-degree faces. 

As mentioned in the applications of Theorem~\ref{thm:core_char}, each of the plane connected components either has at least three odd-degree faces or it is a plane odd cycle or it is bipartite (all faces of even length). In the latter case, the core is an edge, which can be mapped anywhere else. We also know that there cannot be more than one plane odd cycle among the connected components since otherwise one could be mapped into another and the whole map would not be a core. 
Furthermore, a homomorphism between cores of $\mathcal{N}_{3,4}$ cannot merge one plane connected component
into a non-plane one, since that would strictly increase the number of odd-degree faces of the non-plane component (each plane component besides, possibly, the odd cycle, has at least three odd-degree faces, while merging two connected components can reduce the total number of odd-degree faces by at most two).
Thus, as the overall number of odd-degree faces is bounded in $N_4$, the number of plane connected components is also bounded for each of the cores in $\mathcal{N}_{3,4}$. Therefore, there exist $N_5, N_6\in \mathcal{N}_{3,4}$ with $N_5<N_6$ that have the same profile of connected components and odd-degree faces (the genuses of the components, and the number of odd-degree faces on each of the connected components), and with a dense total order between them. Applying the same reasoning as in the connected case, we find two cores in $\mathcal{N}_{5,6}$ with finitely many cores in between them.  
\end{proof}

\begin{corollary}\label{cor:no_dense_order_between_pair}
In the preorder $(\mathcal{M}(g), \leq)$, there is no pair of maps $M_1, M_2$ with $M_1<M_2$ and a dense total order between them.
\end{corollary}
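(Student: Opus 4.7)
The plan is to derive the corollary directly from Theorem~\ref{thm:no_dense_order_cores} by passing from maps to their cores, exploiting Proposition~\ref{p.prop_cores}\ref{en.core3} to show that homomorphism-existence only depends on the cores, and hence that any dense total order in $(\mathcal{M}(g),\leq)$ between $M_1$ and $M_2$ descends to a dense total order in $(\mathcal{M}_{c}(g),\leq)$ between their cores.

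First I would argue by contradiction: suppose $M_1 < M_2$ in $(\mathcal{M}(g),\leq)$ and there is a dense total order $\mathcal{C}\subseteq \mathcal{M}(g)$ between them. For each $M\in \mathcal{C}\cup\{M_1,M_2\}$, let $\mathrm{core}(M)$ denote its core (unique up to isomorphism by Proposition~\ref{p.cu}). By Proposition~\ref{p.prop_cores}\ref{en.core3}, for any two maps $M, M' \in \mathcal{M}(g)$ we have $M\leq M'$ if and only if $\mathrm{core}(M)\leq \mathrm{core}(M')$. In particular $M\cong \mathrm{core}(M)$ in the preorder, and two maps are equivalent in the preorder precisely when their cores are isomorphic. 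Setting $N_1=\mathrm{core}(M_1)$ and $N_2=\mathrm{core}(M_2)$, the strict inequality $M_1<M_2$ transfers to $N_1<N_2$ in $(\mathcal{M}_{c}(g),\leq)$.

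Next I would show that the collection $\{\mathrm{core}(M): M\in \mathcal{C}\}\cup\{N_1,N_2\}$ forms a dense total order between $N_1$ and $N_2$ in $(\mathcal{M}_{c}(g),\leq)$. To verify density: suppose $N', N''$ are cores with $N_1\leq N'<N''\leq N_2$ arising as cores of elements in this collection. Viewing $N'$ and $N''$ as maps (since each core is itself a map in $\mathcal{M}(g)$), we have $M_1\leq N'<N''\leq M_2$ in the preorder. By the density of $\mathcal{C}$ between $M_1$ and $M_2$, there exists $M\in\mathcal{M}(g)$ with $N'<M<N''$. Letting $N=\mathrm{core}(M)$, Proposition~\ref{p.prop_cores}\ref{en.core3} together with $M\cong N$ yields $N'<N<N''$ in $(\mathcal{M}_{c}(g),\leq)$. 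Hence the cores of the elements of $\mathcal{C}$ witness a dense total order between $N_1$ and $N_2$ in the partial order on cores, contradicting Theorem~\ref{thm:no_dense_order_cores}.

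The only subtle point in this reduction is confirming that strict inequalities are preserved when passing to cores and back: if $M'<M''$ in the preorder (meaning $M'\leq M''$ but $M''\not\leq M'$), then by the "if and only if" of Proposition~\ref{p.prop_cores}\ref{en.core3} we have $\mathrm{core}(M')\leq \mathrm{core}(M'')$ and $\mathrm{core}(M'')\not\leq \mathrm{core}(M')$, hence the inequality remains strict; conversely, any two non-isomorphic cores are non-equivalent as maps. This bookkeeping is routine, and once in place the density argument outlined above closes the proof.
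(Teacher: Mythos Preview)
Your proof is correct and takes essentially the same approach as the paper: reduce to Theorem~\ref{thm:no_dense_order_cores} via Proposition~\ref{p.prop_cores}\ref{en.core3}, noting that strict inequalities between maps transfer to strict inequalities between their cores. The paper's proof is a two-line sketch of exactly this argument; you have simply fleshed out the details.
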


\begin{proof}
This follows from the fact that a dense total order between two maps would imply, by Proposition~\ref{p.prop_cores} part \ref{en.core3}, a dense total order between their cores; 
here we  also use that if $M_1<M_2$ then their cores are different.
\end{proof}


Let us now exhibit an infinite chain of map cores. For the remainder of this section we use $C_i$ to denote the plane map of a graph cycle of length $i$.
Let $T_{i,j}$ be the plane map obtained  by adding to  $C_4$ a path of length $i$ between the first and the third vertex, and a path of length $j$ between the second and the fourth vertex (each of these paths subdivides one of the faces induced by the $C_4$ on the plane). Thus, $T_{i,j}$ has four faces, two of length $i+2$ and two of length $j+2$. Since $T_{2k+1, \, 2s+1}$ has all faces of odd degree and no vertices of degree one, then  $T_{2k+1, \, 2s+1}$ is a core for all $k,s\geq 0$. By folding one edge appropriately, we have $T_{2k+1, \, 2s+1} \to T_{2k+1, \, 2(s-1)+1}$ for $s\geq 1$ and $k\geq 0$; symmetrically, $T_{2k+1, \, 2s+1} \to T_{2(k-1)+1, \, 2s+1}$ for $s\geq0$ and $k\geq 1$. As $T_{i,3}$ has a face of length $5$, we conclude that $
C_5\to \cdots \to T_{2s+1,3} \to T_{2(s-1)+1,3} \to \cdots \to T_{3,3} \to T_{1,3}$
is an infinite (ordered) chain of cores.

We can also find an infinite chain of gaps by
using that a plane connected map that is a core is bipartite (an edge or an isolated vertex) or a cycle of odd length or has at least four faces of odd degree, and that the codomain of a map homomorphism must have at least as many odd-degree faces as the domain.

\begin{theorem}\label{thm:gap_odd_cycles}
Let $C_{i}$ denote the plane cycle of length $i$. For each $k\geq 1$, $C_{2k+3}\to C_{2k+1}$ and
there is a gap between $C_{2k+3}$ and $C_{2k+1}$.
\end{theorem}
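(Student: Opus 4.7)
The plan is to handle the two claims separately. For the homomorphism $C_{2k+3}\to C_{2k+1}$, I would describe a concrete folding. Label the vertices of $C_{2k+3}$ cyclically as $v_0,v_1,\ldots,v_{2k+2}$. Pick a cross $a$ incident with $v_0$ and a cross $b$ incident with $v_2$ which are coincident with one of the two faces of the plane cycle $C_{2k+3}$. Since these crosses are incident with distinct vertices and coincident with a common face, Definition~\ref{def:vertex_gluing} says that riffling $a,b$ glues $v_0$ and $v_2$, yielding a plane map in which the edges $v_0v_1$ and $v_1v_2$ bound a face of degree two and so are duplicate (Definition~\ref{def:duplicate_edges}). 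Gluing this pair of duplicate edges produces the plane cycle $C_{2k+1}$. The composition is an epimorphism, hence a map homomorphism.

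For the gap, suppose toward a contradiction that a connected core $M\in\mathcal{M}_c(0)$ exists with $C_{2k+3}<M<C_{2k+1}$ in the partial order. Then $\sg(M)=0$, so $M$ is plane, and since we restrict to connected maps, $M$ is connected. The crucial ingredient is the monotonicity of the number of odd-degree faces under map homomorphisms on connected maps, recalled in the footnote to the proof of Theorem~\ref{thm:no_dense_order_cores}: a within-component vertex gluing splits an odd-degree face into one odd and one even face, and an even-degree face into either two even or two odd faces, and duplicate edge gluings remove only a degree-two face. Applying this to $C_{2k+3}\twoheadrightarrow M$ gives at least two odd-degree faces in $M$, and to $M\twoheadrightarrow C_{2k+1}$ at most two. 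Hence $M$ has exactly two odd-degree faces.

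Now I invoke the classification of plane connected cores stated in the second application after Theorem~\ref{thm:core_char}: such a core either consists of a single edge or isolated vertex (no odd-degree faces), is an odd cycle (two odd-degree faces), or has at least four odd-degree faces. Since $M$ has exactly two, it must be an odd cycle $C_{2\ell+1}$ for some $\ell\geq 1$. Any map homomorphism $C_{2k+3}\to C_{2\ell+1}$ is in particular a graph homomorphism, and the standard odd-girth comparison (since an odd cycle of length $n$ admits a homomorphism to an odd cycle of length $m$ iff $n\geq m$) forces $2\ell+1\leq 2k+3$, i.e.\ $\ell\leq k+1$; symmetrically $C_{2\ell+1}\to C_{2k+1}$ forces $\ell\geq k$. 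Thus $\ell\in\{k,k+1\}$, so $M\cong C_{2k+1}$ or $M\cong C_{2k+3}$, contradicting in each case one of the two strict inequalities defining the position of $M$.

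The main obstacle is just having in place the monotonicity of odd-face counts together with the classification of plane connected cores as either edges/vertices, odd cycles, or maps with at least four odd-degree faces; both of these are already established in the paper, and once invoked the rest reduces to a routine odd-girth calculation for homomorphisms between odd cycles.
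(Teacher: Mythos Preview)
Your argument is correct and follows exactly the route the paper sketches in the two sentences preceding the theorem: use (i) the classification of plane connected cores (edge/vertex, odd cycle, or at least four odd-degree faces) and (ii) the fact that the codomain of a map homomorphism has at least as many odd-degree faces as the domain, then finish with the standard odd-girth comparison between odd cycles. One small notational slip: you write $C_{2k+3}\twoheadrightarrow M$ and $M\twoheadrightarrow C_{2k+1}$, but these are general homomorphisms, not epimorphisms onto the codomain; the monotonicity statement you need is the one for the \emph{codomain} (as stated in the paper), not just for the epimorphic image, so it would be cleaner to write $\to$ and invoke that formulation directly.
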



For each signed genus~$g$, we obtain an infinite antichain in $\mathcal{M}_c(g)$ and $\mathcal{M}(g)$ by considering maps with an increasing number of odd-degree faces, and an increasing degree for each of these faces. 
In \cite{hell_core_1992} the authors consider, for each graph $H$, the family 
$\mathcal{G}(H)=\{G\:|\: G\text{ is a graph and }G\to H\}$
and show the existence of a graph $H_0$ for which $\mathcal{G}(H_0)$ contains an infinite antichain \cite[Theorem~6]{hell_core_1992}
(the argument uses only planar graphs); we next present a similar construction to find arbitrarily large antichains in the posets given by
$\mathcal{F}(H)=\{M\:|\: M\text{ is a map and }M\to H\}$.
\begin{theorem}\label{thm:fin_indep_cores}
    For every even $n$, and odd $k\geq 3$, there exist plane maps $B$ and $A_1,\ldots,A_n$ such that
    \begin{itemize}
        \item the underlying graphs of $A_1,\ldots,A_n$ are graph cores, and $B$ is a map core,
        \item each of  $B$ and $A_1,\ldots,A_n$ have underlying graph of odd girth $2k+1$,
        \item $A_i\to B$ for each $i\in [n]$, and $A_i \nrightarrow A_j$ for distinct $i,j\in [n]$.
    \end{itemize}
\end{theorem}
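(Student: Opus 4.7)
The plan is to adapt the antichain construction of Hell and Ne\v{s}et\v{r}il~\cite[Theorem~6]{hell_core_1992} to the setting of plane maps, ensuring that all underlying graphs have odd girth exactly $2k+1$ and that the additional map-theoretic constraints are met. The central idea is to fix a plane graph core $H_0$ of odd girth $2k+1$ with $n$ designated ``hook'' vertices lying on distinct facial walks of some plane embedding, and to attach pairwise incomparable planar ``tag'' graph cores at these hooks.

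First I would construct $n$ pairwise homomorphically incomparable plane graph cores $T_1,\dots,T_n$, each of odd girth $2k+1$ and each carrying a specified attachment vertex. The Hell--Ne\v{s}et\v{r}il construction produces such tags as vertex-marked graphs distinguished, for instance, by the length of a prescribed odd closed walk through the attachment vertex; since $k\geq 3$, all odd lengths in play are at least $7$, leaving room for many pairwise incomparable tags, and the argument of \cite{hell_core_1992} can be carried out using planar graphs. Then I would define $A_i$ as the plane map obtained by identifying the $i$-th hook of $H_0$ with the attachment vertex of a copy of $T_i$; and define $B$ as the plane map obtained by simultaneously identifying, for each $i\in\{1,\dots,n\}$, the $i$-th hook of $H_0$ with the attachment vertex of a copy of $T_i$, with each $T_i$ placed in a different face of $H_0$.

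The homomorphism $A_i\to B$ comes from the inclusion of $A_i$ as a submap of $B$ (the absent tags of $A_i$ are realized in $B$ but lie outside the image). The non-existence of $A_i\to A_j$ for $i\neq j$ is argued by contradiction: such a map homomorphism would restrict to a graph homomorphism between the underlying graphs, sending the tag $T_i$ of $A_i$ into the subgraph $H_0\cup T_j$ of $A_j$; by the pairwise incomparability of the tags, $T_i\not\to T_j$, and since $H_0$ is a graph core of odd girth $2k+1$, it cannot absorb $T_i$ without violating the odd-girth constraint or the core property. The underlying graph of each $A_i$ is a graph core because both $H_0$ and $T_i$ are graph cores and the hook identification is rigid. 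The hypothesis that $n$ is even is consistent with the constraint that the number of odd-degree faces of a plane map is even, matching the $n$ tag-faces of odd degree that $B$ is designed to carry.

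The main obstacle is verifying that $B$ itself is a map core, not merely that its underlying graph is a graph core. This is carried out through Theorem~\ref{thm:core_char}: one must show that every prefacial cross-circuit $\kappa$ of $B$ is either a facial walk, or contains more than one odd-degree face, or contains no face of degree at least $\ell(\kappa)$ with the same parity. The odd-girth hypothesis $2k+1\geq 7$ excludes short foldable cross-circuits in $H_0$ and in the tags, while the careful design of each tag (arranged to contain several interior odd faces whose degrees are controlled) ensures that any prefacial cross-circuit enclosing all or part of a tag either traverses a single face or contains multiple odd-degree faces in its interior, so the foldability criterion of Theorem~\ref{thm:core_char} fails and $B$ is a core.
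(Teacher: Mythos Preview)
Your proposal is a plan rather than a proof: you never construct $H_0$ or the tags $T_i$, and several of the inferences you rely on do not hold in general.

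First, the claim that the underlying graph of $A_i$ is a graph core ``because both $H_0$ and $T_i$ are graph cores and the hook identification is rigid'' is not a valid argument; identifying two graph cores at a vertex need not yield a graph core. Second, your argument for $A_i\nrightarrow A_j$ is incomplete: incomparability of $T_i$ and $T_j$ does not by itself exclude homomorphisms that spread $T_i$ partly over $H_0$ and partly over $T_j$. Third, the verification that $B$ is a map core is deferred entirely to an unspecified ``careful design'' and an unexecuted case analysis of prefacial cross-circuits via Theorem~\ref{thm:core_char}.

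The paper's route avoids all three difficulties by choosing constructions that make the checks trivial. It fixes a single rigid gadget $U$: a plane graph core of odd girth $2k+1$ whose plane embedding has \emph{every} face of odd degree (so $U$ is a map core by the criterion stated just after Theorem~\ref{thm:core_char}). It then replaces each arc of a marked directed path $T_i$ (one backward arc at position $i+2$) by a copy of $U$, obtaining $A_i$; closing the path into a cycle yields $B$. With $n$ even, a parity check shows that \emph{every} face of $B$ has odd degree, so $B$ is a map core immediately---no prefacial cross-circuit analysis is needed. The antichain property $A_i\nrightarrow A_j$ holds at the level of underlying graphs: since $U$ is a graph core, each copy of $U$ in $A_i$ must land on a copy of $U$ in $A_j$ respecting the distinguished vertices $u$ and $v$, which reduces the question to a homomorphism $T_i\to T_j$ of the marked directed paths, and these are pairwise incomparable by the Hell--Ne\v{s}et\v{r}il argument.
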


\begin{proof}
We slightly adapt the arguments from \cite[Theorem~6]{hell_core_1992}; see Figures~\ref{fig:uandti} and~\ref{fig:t1tom1} for illustrations of the constructions.

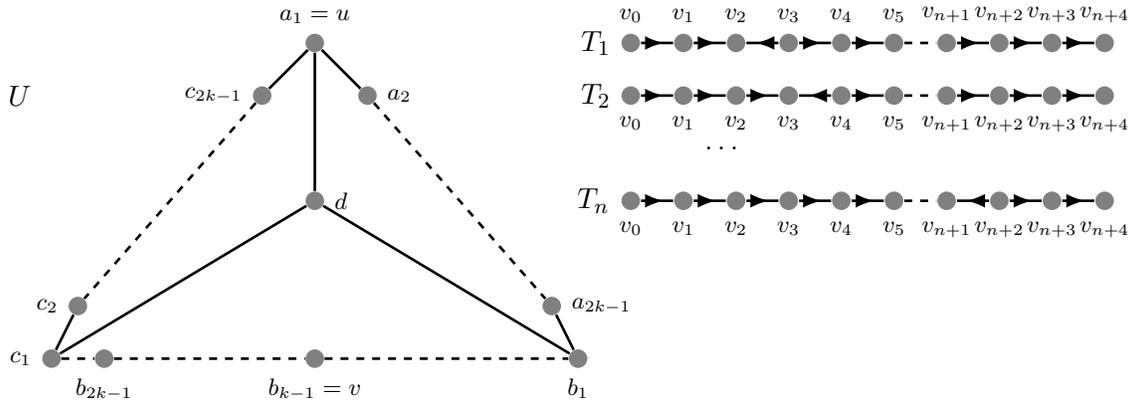
\begin{figure}[htb]
	\begin{center}
		\begin{tikzpicture}[scale=0.70] 
		
		\tikzstyle{vertex}=[circle,fill=black!50,minimum size=7pt,inner sep=0pt]

        \node[vertex,label=left:{\footnotesize{$c_1$}}] (c1) at (-5,4) {};
        \node[vertex,label=below:{\footnotesize{$b_{2k-1}$}}] (b2gm1) at (-4,4) {};
        \node[vertex,label=below:{\footnotesize{$b_{k-1}=v$}}] (v) at (0,4) {};
        \node[vertex,label=below:{\footnotesize{$b_1$}}] (b1) at (5,4) {};

        \node[vertex,label=left:{\footnotesize{$c_2$}}] (c2) at (-4.5,5 ) {};
        \node[vertex,label=left:{\footnotesize{$c_{2k-1}$}}] (c2gm1) at (-1,9) {};

         \node[vertex,label=above:{\footnotesize{$a_{1}=u$}}] (a1) at (0,10) {};

         \node[vertex,label=right:{\footnotesize{$a_{2}$}}] (a2) at (1,9) {};
         \node[vertex,label=right:{\footnotesize{$a_{2k-1}$}}] (a2gm1) at (4.5,5) {};

         \node[vertex,label=right:{\footnotesize{$d$}}] (d) at (0,7) {};

        \draw[line width= 1pt,dashed,-] (c1) -- (b2gm1);
        \draw[line width= 1pt,dashed,-] (b2gm1) -- (v);
        \draw[line width= 1pt,-] (b1) -- (a2gm1);
        \draw[line width= 1pt,dashed,-] (a2) -- (a2gm1);
        
        \draw[line width= 1pt,-] (a2) -- (a1);
        \draw[line width= 1pt,-] (a1) -- (c2gm1);
        \draw[line width= 1pt,dashed,-] (c2gm1) -- (c2);
        \draw[line width= 1pt,-] (c2) -- (c1);
        \draw[line width= 1pt,dashed,-] (v) -- (b1);

        \draw[line width= 1pt,-] (d) -- (c1);
        \draw[line width= 1pt,-] (d) -- (b1);
        \draw[line width= 1pt,-] (d) -- (a1);

        \node[label=left:{$U$}] (U) at (-5,9) {};

        \node[label=left:{$T_1$}] (T1) at (6,10) {};
        \node[label=left:{$T_2$}] (T2) at (6,9) {};

\begin{scope}[thick,decoration={
    markings,
    mark=at position 0.7 with {\arrow[scale=1.2]{latex}}}
    ] 

        \node[vertex,label=above:{\footnotesize{$v_0$}}] (vt11) at (6,10) {};
        \node[vertex,label=above:{\footnotesize{$v_1$}}] (vt12) at (7,10) {};
        \node[vertex,label=above:{\footnotesize{$v_2$}}] (vt13) at (8,10) {};
        \node[vertex,label=above:{\footnotesize{$v_3$}}] (vt14) at (9,10) {};
        \node[vertex,label=above:{\footnotesize{$v_4$}}] (vt15) at (10,10) {};
        \node[vertex,label=above:{\footnotesize{$v_5$}}] (vt16) at (11,10) {};
        \node[vertex,label=above:{\footnotesize{$v_{n+1}$}}] (vt17) at (12,10) {};
        \node[vertex,label=above:{\footnotesize{$v_{n+2}$}}] (vt18) at (13,10) {};
        \node[vertex,label=above:{\footnotesize{$v_{n+3}$}}] (vt19) at (14,10) {};
        \node[vertex,label=above:{\footnotesize{$v_{n+4}$}}] (vt110) at (15,10) {};

        \draw[line width= 1pt,-, postaction={decorate}] (vt11) -- (vt12);
        \draw[line width= 1pt,-, postaction={decorate}] (vt12) -- (vt13);
        \draw[line width= 1pt,-, postaction={decorate}] (vt14) -- (vt13);
        \draw[line width= 1pt,-, postaction={decorate}] (vt14) -- (vt15);
        \draw[line width= 1pt,-, postaction={decorate}] (vt15) -- (vt16);
        \draw[line width= 1pt,dashed,-] (vt16) -- (vt17);
        \draw[line width= 1pt,-, postaction={decorate}] (vt17) -- (vt18);
        \draw[line width= 1pt,-, postaction={decorate}] (vt18) -- (vt19);
        \draw[line width= 1pt,-, postaction={decorate}] (vt19) -- (vt110);

        \node[vertex,label=below:{\footnotesize{$v_0$}}] (vt21) at (6,9) {};
        \node[vertex,label=below:{\footnotesize{$v_1$}}] (vt22) at (7,9) {};
        \node[vertex,label=below:{\footnotesize{$v_2$}}] (vt23) at (8,9) {};
        \node[vertex,label=below:{\footnotesize{$v_3$}}] (vt24) at (9,9) {};
        \node[vertex,label=below:{\footnotesize{$v_4$}}] (vt25) at (10,9) {};
        \node[vertex,label=below:{\footnotesize{$v_5$}}] (vt26) at (11,9) {};
        \node[vertex,label=below:{\footnotesize{$v_{n+1}$}}] (vt27) at (12,9) {};
        \node[vertex,label=below:{\footnotesize{$v_{n+2}$}}] (vt28) at (13,9) {};
        \node[vertex,label=below:{\footnotesize{$v_{n+3}$}}] (vt29) at (14,9) {};
        \node[vertex,label=below:{\footnotesize{$v_{n+4}$}}] (vt210) at (15,9) {};
        
        \draw[line width= 1pt,-, postaction={decorate}] (vt21) -- (vt22);
        \draw[line width= 1pt,-, postaction={decorate}] (vt22) -- (vt23);
        \draw[line width= 1pt,-, postaction={decorate}] (vt23) -- (vt24);
        \draw[line width= 1pt,-, postaction={decorate}] (vt25) -- (vt24);
        \draw[line width= 1pt,-, postaction={decorate}] (vt25) -- (vt26);
        \draw[line width= 1pt,dashed,-] (vt26) -- (vt27);
        \draw[line width= 1pt,-, postaction={decorate}] (vt27) -- (vt28);
        \draw[line width= 1pt,-, postaction={decorate}] (vt28) -- (vt29);
        \draw[line width= 1pt,-, postaction={decorate}] (vt29) -- (vt210);

        \node[label=right:{\ldots}] at (7,8) {};

        \node[label=left:{$T_n$}] (Tn) at (6,7) {};

        \node[vertex, label=below:{\footnotesize{$v_0$}}] (vtn1) at (6,7) {};
        \node[vertex, label=below:{\footnotesize{$v_1$}}] (vtn2) at (7,7) {};
        \node[vertex,label=below:{\footnotesize{$v_2$}}] (vtn3) at (8,7) {};
        \node[vertex,label=below:{\footnotesize{$v_3$}}] (vtn4) at (9,7) {};
        \node[vertex,label=below:{\footnotesize{$v_4$}}] (vtn5) at (10,7) {};
        \node[vertex,label=below:{\footnotesize{$v_5$}}] (vtn6) at (11,7) {};
        \node[vertex,label=below:{\footnotesize{$v_{n+1}$}}] (vtn7) at (12,7) {};
        \node[vertex,label=below:{\footnotesize{$v_{n+2}$}}] (vtn8) at (13,7) {};
        \node[vertex,label=below:{\footnotesize{$v_{n+3}$}}] (vtn9) at (14,7) {};
        \node[vertex,label=below:{\footnotesize{$v_{n+4}$}}] (vtn10) at (15,7) {};

        \draw[line width= 1pt,-, postaction={decorate}] (vtn1) -- (vtn2);
        \draw[line width= 1pt,-, postaction={decorate}] (vtn2) -- (vtn3);
        \draw[line width= 1pt,-, postaction={decorate}] (vtn3) -- (vtn4);
        \draw[line width= 1pt,-, postaction={decorate}] (vtn4) -- (vtn5);
        \draw[line width= 1pt,-, postaction={decorate}] (vtn5) -- (vtn6);
        \draw[line width= 1pt,dashed,-] (vtn6) -- (vtn7);
        \draw[line width= 1pt,-, postaction={decorate}] (vtn8) -- (vtn7);
        \draw[line width= 1pt,-, postaction={decorate}] (vtn8) -- (vtn9);
        \draw[line width= 1pt,-, postaction={decorate}] (vtn9) -- (vtn10);

        \end{scope}

	\end{tikzpicture}
\end{center}
\caption{Copy of the graph $U$, and the dimaps $T_i$.}\label{fig:uandti}
\end{figure}

\begin{figure}[htb]
	\begin{center}
		\begin{tikzpicture}[scale=0.55] 
		
		\tikzstyle{vertex}=[circle,fill=black!50,minimum size=7pt,inner sep=0pt]

        \node[label=right:{$A_1$}] (M1) at (-3,8) {};
        \node[vertex, label=above:{\footnotesize{$b_1$}}] (b11) at (0,8) {};
        \node[vertex, label=above:{\footnotesize{$b_1$}}] (b12) at (4,8) {};
        \node[vertex,label=below:{\footnotesize{$b_1$}}] (b13) at (5,4) {};
        \node[vertex,label=above:{\footnotesize{$b_1$}}] (b14) at (12,8) {};
        \node[vertex,label=above:{\footnotesize{$b_1$}}] (b16) at (20,8) {};

        \node[vertex, label=below:{\footnotesize{$c_1$}}] (c11) at (0,0) {};
        \node[vertex, label=below:{\footnotesize{$c_1$}}] (c12) at (4,0) {};
        \node[vertex,label=above:{\footnotesize{$c_1$}}] (c13) at (5,12) {};
        \node[vertex,label=below:{\footnotesize{$c_1$}}] (c14) at (12,0) {};
        \node[vertex,label=below:{\footnotesize{$c_1$}}] (c16) at (20,0) {};

        \node[vertex, label=above:{\footnotesize{$u$}}] (a11) at (-4,4) {};
        \node[vertex, label=right:{\footnotesize{$u$}}] (a12) at (0,6) {};
        \node[vertex,label=left:{\footnotesize{$u$}}] (a13) at (8,6) {};
        \node[vertex,label=right:{\footnotesize{$u$}}] (a14) at (8,6) {};
        \node[vertex,label=right:{\footnotesize{$u$}}] (a15) at (12,6) {};
        \node[vertex,label=right:{\footnotesize{$u$}}] (a16) at (16,6) {};

        \node[vertex, label=above:{\footnotesize{$d$}}] (d1) at (-2,4) {};
        \node[vertex, label=above:{\footnotesize{$d$}}] (d2) at (2,5) {};
        \node[vertex,label=left:{\footnotesize{$d$}}] (d3) at (6,8) {};
        \node[vertex,label=above:{\footnotesize{$d$}}] (d4) at (10,5) {};
        \node[vertex,label=above:{\footnotesize{$d$}}] (d6) at (18,5) {};

        \node[vertex, label=left:{\footnotesize{$v$}}] (v1) at (0,6) {};
        \node[vertex, label=left:{\footnotesize{$v$}}] (v2) at (4,6) {};
        \node[vertex,label=right:{\footnotesize{$v$}}] (v3) at (4,6) {};
        \node[vertex,label=left:{\footnotesize{$v$}}] (v4) at (12,6) {};
        \node[vertex,label=left:{\footnotesize{$v$}}] (v5) at (16,6) {};
        \node[vertex,label=left:{\footnotesize{$v$}}] (v6) at (20,6) {};

        \draw[line width= 1pt,dashed,-] (a11) -- (b11);
        \draw[line width= 1pt,dashed,-] (b11) -- (v1);
        \draw[line width= 1pt,dashed,-] (v1) -- (c11);
        \draw[line width= 1pt,dashed,-] (a11) -- (c11);
        \draw[line width= 1pt,-] (d1) -- (c11);
        \draw[line width= 1pt,-] (d1) -- (b11);
        \draw[line width= 1pt,-] (d1) -- (a11);

        \draw[line width= 1pt,dashed,-] (a12) -- (b12);
        \draw[line width= 1pt,dashed,-] (b12) -- (v2);
        \draw[line width= 1pt,dashed,-] (v2) -- (c12);
        \draw[line width= 1pt,dashed,-] (a12) -- (c12);
        \draw[line width= 1pt,-] (d2) -- (c12);
        \draw[line width= 1pt,-] (d2) -- (b12);
        \draw[line width= 1pt,-] (d2) -- (a12);

        \draw[line width= 1pt,dashed,-] (a13) -- (b13);
        \draw[line width= 1pt,dashed,-] (b13) -- (v3);
        \draw[line width= 1pt,dashed,-] (v3) -- (c13);
        \draw[line width= 1pt,dashed,-] (a13) -- (c13);
        \draw[line width= 1pt,-] (d3) -- (c13);
        \draw[line width= 1pt,-] (d3) -- (b13);
        \draw[line width= 1pt,-] (d3) -- (a13);

        \draw[line width= 1pt,dashed,-] (a14) -- (b14);
        \draw[line width= 1pt,dashed,-] (b14) -- (v4);
        \draw[line width= 1pt,dashed,-] (v4) -- (c14);
        \draw[line width= 1pt,dashed,-] (a14) -- (c14);
        \draw[line width= 1pt,-] (d4) -- (c14);
        \draw[line width= 1pt,-] (d4) -- (b14);
        \draw[line width= 1pt,-] (d4) -- (a14);

        \node[label=right:{\ldots}] at (13,6) {};

               \draw[line width= 1pt,dashed,-] (a16) -- (b16);
        \draw[line width= 1pt,dashed,-] (b16) -- (v6);
        \draw[line width= 1pt,dashed,-] (v6) -- (c16);
        \draw[line width= 1pt,dashed,-] (a16) -- (c16);
        \draw[line width= 1pt,-] (d6) -- (c16);
        \draw[line width= 1pt,-] (d6) -- (b16);
        \draw[line width= 1pt,-] (d6) -- (a16);

	\end{tikzpicture}
\end{center}
\caption{Exchanging edges of $T_1$ by copies of $U$ to create $A_1$. The map $B$ is obtained by gluing the first $u$ and the last $v$, along the exterior face.}\label{fig:t1tom1}
\end{figure}
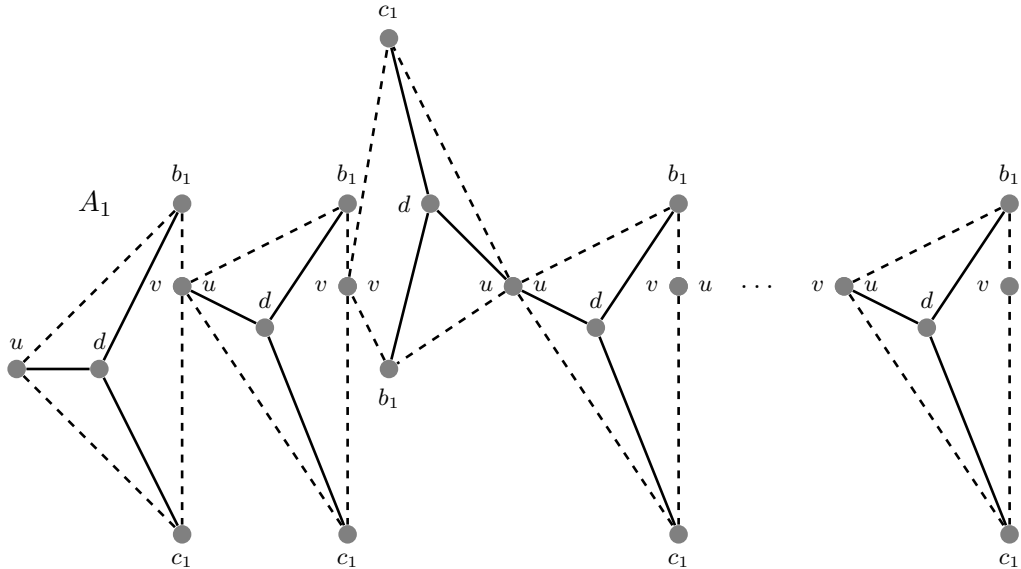

Let $U$ be a graph with vertices $\{a_1,a_2,\allowbreak \dots,\allowbreak a_{2k-1},b_1,b_2,\allowbreak \ldots,\allowbreak b_{2k-1},\allowbreak c_1,c_2,\ldots,c_{2k-1},d\}$ where there is a cycle  $(a_1,a_2,\allowbreak \dots,\allowbreak a_{2k-1},b_1,b_2,\allowbreak \ldots,\allowbreak b_{2k-1},\allowbreak c_1,c_2,\ldots,c_{2k-1}, a_1)$ and $d$ is adjacent to $b_1,c_1,a_1$. Now use the label $u$ for $a_1$ and $v$ for $b_{k-1}$. A plane embedding of $U$ is a map core since all the faces have odd degree. It is also a graph core as each odd cycle should be mapped to an odd cycle, and a cycle of length the odd girth should be mapped to a cycle of length the odd girth; two of the cycles cannot be mapped to a single one as otherwise the third odd-girth-length cycle would be shortened.

Let $T_i$, $1\leq i\leq n$, be a path with $n+4$ directed edges, all of them oriented in one direction but the $i+2$-th edge that is oriented in the opposite direction. Let $G_i$ be the graph obtained by replacing each edge of $T_i$ by a copy of $U$, placing vertex $u$ as the tail of the edge and $v$ as the head.
Now, consider the plane embeddings of the graphs $G_i$, denoted by $A_i$, 
 where all the vertices $b_1$ of the respective triangles are  ``facing up'' except the $b_1$ corresponding to the copy of $U$ given by the backward edge, which is ``facing down''.
The plane map $B$ is formed by gluing the first vertex $u$ with the last vertex $v$. Thus, $B$ consists of $n+4$ copies of $U$ with all but one vertex $b_1$ to the outside of the circle, and one of the vertices $b_1$ to the inside. It is clear that $A_i\to B$, but by the argument in  \cite[Theorem~6]{hell_core_1992} all the graphs $G_i$ are graph cores and $A_i\nrightarrow  A_j$ (indeed, assume not, since $U$ is a core, each copy of $U$ in $A_i$ should be mapped to a copy of $U$, vertices $u$ in different copies should be mapped among them as well as vertices $v$, since $v$ is located slightly to one side along an odd-girth-length cycle; we would thus have a graph homomorphism between $T_i$ and $T_j$, a contradiction) and thus $A_i$ are map cores.
Further, for $n$ even, each face of $B$ has odd degree, and thus it is a map core. This follows from the fact that each face of a plane embedding of $U$ has odd degree, and so the path $uv$ has odd length on one side and even length on the other; in particular, by joining an even number of $U$'s together, and merging the first $u$ and the last $v$ as in $A_i$, we obtain two faces, both of odd degree (the facial walk of one face is obtained as the addition of an odd number of odd-length paths plus an even-length path, and the facial walk of the other face is obtained as the addition of an odd number of even-length paths plus an odd-length path).
\end{proof}

In \cite[Theorem~6]{hell_core_1992}, the graph codomain of the antichain  is the odd cycle of length $2k+1$, which is a (graph) core and does not depend on $n$, but only on $k$. However, since the number of odd faces of $A_1,\ldots,A_n$ increases with $n$, when using the same construction, the map core $B$, codomain of the antichain,  also depends on $n$. 
Thus, we can find arbitrarily large antichains, but not an infinite one. This raises the following question: 
\begin{quote}
Is there a map $B$ for which $\mathcal{F}(B)$ contains an infinite antichain?
\end{quote}

\small
\bibliographystyle{abbrv}
\bibliography{biblio.bib}

\end{document}